\documentclass[10pt]{article}
\usepackage{algorithm}
\usepackage{algpseudocode}
\usepackage{amsfonts}
\usepackage{amsmath}
\usepackage{amssymb} % 実数Rの入力する
\usepackage{amsthm} % 定理を記述する
\usepackage[title]{appendix} % 単語"appendix"を挿入する（https://tex.stackexchange.com/questions/226481/appendix-section-title）
\usepackage{autobreak}
\usepackage{bigdelim} % 表の周りに波括弧とコメントをつける（https://motchy99.blog.fc2.com/blog-entry-78.html）
\usepackage{blkarray}

\makeatletter
\def\BA@fnsymbol#1{\ensuremath{%
  \ifcase#1\or *\or \dagger\or \ddagger\or \mathsection\or \mathparagraph\or \|\or **\or \dagger\dagger\or \ddagger\ddagger \else@ctrerr\fi}}%
\makeatother
\usepackage{bm}
\usepackage{cancel} % 数式の上からキャンセル線を引く
\usepackage{cases}
\usepackage{chngcntr}
\usepackage{colortbl} % 表に色付けする
\usepackage{comment}
\usepackage{empheq} % 連立方程式を書く（https://tex.stackexchange.com/questions/451687/main-label-in-a-system-of-equations）
\usepackage{enumitem}%\usepackage{enumerate}
\usepackage{fancyhdr} % 上側に線を引く
\usepackage{fancyvrb} % verbatimを使う
\usepackage{framed} % 枠囲いをする（その２）
\usepackage{graphicx}% 図表を挿入する [dvipdfmx]
\usepackage{hyperref}
\usepackage[many]{tcolorbox}
\tcbuselibrary{breakable} % tcolorboxでページまたぎを許可
\usepackage[colorinlistoftodos,prependcaption,textsize=footnotesize]{todonotes}

\hypersetup{
  colorlinks,
  linkcolor={red!50!black},
  citecolor={blue!50!black},
  urlcolor={blue!80!black}
}
\usepackage{mathrsfs} % 花文字
\usepackage{multirow} % 表で縦方向に結合する
\usepackage[sort&compress,numbers]{natbib}
\usepackage{setspace} % ダブルスペースを使う
\usepackage{subfig} % 図と表を並置する
\usepackage{pgfplots} % hyperrefを使用するために必要
\usepackage{pxrubrica} % 圏点を入力（\kenten{~~~}）
\usepackage{url}
\usepackage[top=35truemm,bottom=35truemm,left=30truemm,right=30truemm]{geometry} % 余白の設定
\makeatletter

\@addtoreset{equation}{section}
\makeatother
\algtext*{EndWhile}
\algtext*{EndIf}
\algtext*{EndFor}
\theoremstyle{plain}
\newtheorem{theorem}{Theorem}[section]
\newtheorem{proposition}[theorem]{Proposition}%
\newtheorem{corollary}[theorem]{Corollary}
\newtheorem{lemma}[theorem]{Lemma}

\theoremstyle{definition}
\newtheorem{definition}[theorem]{Definition}%
\newtheorem{assumption}[theorem]{Assumption}
\theoremstyle{remark}
\newtheorem{example}[theorem]{Example}%
\newtheorem{remark}[theorem]{Remark}%

\DeclareMathOperator*{\dist}{dist}

\DeclareMathOperator*{\cond}{cond}

\DeclareMathOperator*{\setint}{int}
\DeclareMathOperator*{\setspan}{span}

\DeclareMathOperator*{\tr}{tr}

\DeclareMathOperator*{\Diag}{Diag}

\DeclareMathOperator*{\ri}{ri}

\DeclareMathOperator*{\Feas}{Feas}

\newcommand{\argmin}{\mathop{\rm arg\,min}\limits}

\newcommand{\minimize}{\mathop{\rm minimize}\limits}
\newcommand{\maximize}{\mathop{\rm maximize}\limits}
\newcommand{\subjectto}{\mathrm{subject~to}}
\newcommand{\relmiddle}[1]{\mathrel{}\middle#1\mathrel{}}

\newcommand{\abs}[1]{\lvert #1 \rvert} % 絶対値
\newcommand{\norm}[1]{\lVert #1 \rVert} % ノルム

\newcommand{\bbB}{\mathbb{B}}

\newcommand{\bbN}{\mathbb{N}}

\newcommand{\bbR}{\mathbb{R}}

\newcommand{\bbZ}{\mathbb{Z}}

\newcommand{\calA}{\mathcal{A}}

\newcommand{\calC}{\mathcal{C}}

\newcommand{\calE}{\mathcal{E}}
\newcommand{\calF}{\mathcal{F}}

\newcommand{\calK}{\mathcal{K}}

\newcommand{\calM}{\mathcal{M}}

\newcommand{\calQ}{\mathcal{Q}}
\newcommand{\calR}{\mathcal{R}}
\newcommand{\calS}{\mathcal{S}}

\newcommand{\calV}{\mathcal{V}}
\newcommand{\calW}{\mathcal{W}}
\newcommand{\calX}{\mathcal{X}}

\newcommand{\RNum}[1]{\uppercase\expandafter{\romannumeral #1\relax}} % ローマ数字（大文字）
\newcommand{\Rnum}[1]{\lowercase\expandafter{\romannumeral #1\relax}} % ローマ数字（小文字）

\title{\Large \sf Radial-Type Error Bounds for Semidefinite Feasibility Problems without Strict Feasibility: Qualitative Estimates and Asymptotic Tightness}

\makeatletter
\let\@fnsymbol\@arabic
\makeatother

\author{
 \normalsize
     Mitsuhiro Nishijima\thanks{Department of Industrial and Systems Engineering, Keio University, 3-14-1 Hiyoshi, Kohoku-ku, Yokohama-shi, 2238522, Kanagawa, Japan. ({\tt nishijima@keio.jp}).} %ORCID: 0000-0003-4871-9156
 \and
 \normalsize
Guoyin Li\thanks{Department of Applied Mathematics, University of New South Wales, 2052, Sydney, Australia. ({\tt g.li@unsw.edu.au})} % ORCID: 0000-0002-2099-7974
\and
\normalsize
Bruno F. Louren\c{c}o\thanks{Department of Fundamental Statistical Mathematics, The Institute of Statistical Mathematics, 10-3 Midori-cho, Tachikawa-shi, 1908562, Tokyo, Japan. ({\tt bruno@ism.ac.jp}).} % ORCID: 0000-0003-2369-0107
         }

\begin{document}
\maketitle
\begin{abstract}\noindent
In this paper, we develop a systematic framework for deriving explicit error bounds for semidefinite feasibility problems without assuming strict feasibility (Slater's condition), a setting in which existing results are limited. Our main technical contribution is the introduction of radial-type H\"{o}lder error bounds, where the error bound constant depends explicitly on the norm of the reference matrix through radial modulus functions. By combining facial reduction with recently developed facial residual functions, we obtain explicit descriptions of these modulus functions, yielding qualitative radial-type H\"{o}lder error bounds \emph{without imposing any constraint qualifications}. Our results complement the classical work of Sturm by providing explicit estimates for the constants involved in Sturm's local H\"{o}lder error bounds over bounded sets with a given size.
We further analyze the asymptotic behavior of these bounds as the dimension of the underlying matrix space grows, identifying regimes in which they can be asymptotically tight up to a dimension-free constant. 
As an application, we establish explicit error bounds for the optimality system of semidefinite programs by reformulating them as feasibility problems, a setting where Slater's condition typically fails.
Under the generically satisfied strict complementarity condition, we derive radial-type error bounds without assuming the usual solution uniqueness requirement, and demonstrate their asymptotic tightness through an explicit example.
\end{abstract}

\noindent
{\bf Key words }Error bounds, Semidefinite feasibility problems, Facial residual functions, Semidefinite programming

\vspace{0.5cm}
\noindent
\textbf{Mathematics Subject Classification (2020)}
15B48, 52A20, 90C22, 90C25
% 15B48: Positive matrices and their generalizations; cones of matrices
% 52A20: Convex sets in n dimensions (including convex hypersurfaces)
% 90C22: Semidefinite programming
% 90C25: Convex programming

\section{Introduction}
The problem of finding an element of the intersection of an affine space $\calV$ and a closed convex cone $\calK$ in a finite-dimensional vector space $\calE$ is called a \emph{conic linear feasibility problem}.
We write this problem as $\Feas(\calV,\calK)$ {and, throughout this paper, we always assume that $\calV \cap \calK \neq \emptyset$, that is, $\Feas(\calV,\calK)$ admits a feasible solution.} 
\emph{Error bounds} provide us upper bounds on the distance between a point $x \in \calE$ and the feasible set $\calV \cap \calK$ by using the distances between $x$ and $\calV$ as well as that between $x$ and $\calK$.
Assuming that $\calE$ is equipped with the norm $\norm{\cdot}$ induced by an inner product, we define $\dist(x,\calC) \coloneqq \min\{\norm{x-y} \mid y\in \calC\}$ as the distance between $x$ and a closed convex set $\calC$ in $\calE$.
Following \cite{Sturm2000,LLP2023}, the problem $\Feas(\calV,\calK)$ is said to admit a \emph{(local) H\"{o}lder error bound} with exponent $\gamma \in (0,1]$ if for any bounded set $B$ (sometimes referred to as test sets), it follows that
\begin{equation}
\dist(x,\calV \cap \calK) \le \kappa_B \max\{\dist(x,\calV), \dist(x,\calK)\}^{\gamma} \text{ for all $x\in B$} \label{eq:Holder_eb}
\end{equation}
for some nonnegative modulus $\kappa_B$.\footnote{In some literature, this property is stated in an equivalent way.
For example, in \cite{DLW2017}, the quantity $\max\{\dist(x,\calV), \dist(x,\calK)\}^{\gamma}$ is replaced by $\dist(x,\calV)^{\gamma}+\dist(x,\calK)^{\gamma}$ and the pair $(\calV,\calK)$ is referred to as being $\gamma$-H\"{o}lder regular.} 
If $\gamma=1$, then \eqref{eq:Holder_eb} is called a \emph{Lipschitz} error bound.
Instead of restricting the set $B$ to be bounded, if \eqref{eq:Holder_eb} holds with $B$ being the whole space, then we say that a \emph{global} error bound holds for the problem.

In the definition of H\"{o}lder error bounds, the quantity $\dist(x,\calV \cap \calK)$ is referred to as the \emph{forward error}, which measures the distance to the solution set of the feasibility problem, and is hard to estimate or compute directly, and hence is regarded as unknown.
On the other hand, in numerical computation, what is often readily available to users is the quantity $\calE_{\rm b}(x) \coloneqq \max\{\dist(x,\calV), \dist(x,\calK)\}$ (or equivalently, $\dist(x,\calV)+ \dist(x,\calK)$), which is called the \emph{backward error}.
Thus, the associated error bounds provide us a good way to quantify the unknown forward error in terms of the computable backward error.  

In the case where $\calK$ is a polyhedral cone, the feasibility problem $\Feas(\calV,\calK)$ has a close connection with linear programming, and error bound results for $\Feas(\calV,\calK)$ have been studied extensively. For example, in this case, it is known that the problem $\Feas(\calV,\calK)$ admits a global Lipschitz error bound~\cite{Hoffman1952}, and its modulus is often called a \emph{Hoffman constant}~\cite{KT1995,Za03,Pena2024}. 
The Hoffman constant is closely related to other important quantities such as the chi measure $\chi$~\cite{Zhang2000} and the distance to ill-posedness~\cite{PVZ20XX_Equivalence,PVZ2021}. 

We refer to a conic linear feasibility problem associated with a positive semidefinite cone as a \emph{semidefinite feasibility problem}.
The semidefinite feasibility problem and its associated error bounds have received considerable attention due to their natural connections with the stability and conditioning of the semidefinite programs (SDPs), as well as the powerful modeling capabilities of SDPs~\cite{WSV2000}. 
For example, for semidefinite feasibility problems satisfying the strict feasibility condition (often known as \emph{Slater's condition}), they admit Lipschitz error bounds; see \cite[Corollary~5]{BBL1999} and \cite[Lemma~3.1]{BT2003}.
If a semidefinite feasibility problem satisfies a stronger version of Slater's condition, the problem admits a global Lipschitz error bound and its modulus can be interpreted geometrically~\cite{DH1999,Zhang2000,Hu2005,LMP2014,LAA+2025}. 

While Slater's condition is a standard { constraint qualification} in convex optimization, verifying it can require nontrivial effort. More importantly, it may easily fail in several key settings, such as SDPs arising from relaxations of quadratic assignment problems and semidefinite feasibility problems describing the optimal solution sets of SDPs (see, for example, the survey \cite{DW2017} and the references therein).

In the absence of Slater's condition, \cite[Theorem~3.3]{Sturm2000} established that semidefinite feasibility problems admit (local) H\"{o}lder error bounds. Notably, \cite{Sturm2000} showed that the H\"{o}lder exponent $\gamma$ can be characterized entirely by the \emph{singular degree}, which is determined by the number of steps required by the facial reduction algorithm~\cite{BW1981}. This exponent also plays a crucial role in quantifying the convergence rate of alternating projection methods for ill-posed semidefinite feasibility problems~\cite{DLW2017}. More recently, Lourenço~\cite{Lourenco2021} and Lindstrom, Lourenço, and Pong~\cite{LLP2023} introduced \emph{facial residual functions} and established error bounds for general conic linear feasibility problems, thereby extending Sturm's error bound approach to other classes of cones \cite{LLP2025,LLP2023,LLL+2025,LLL+2024,WLP20XX}.

However, these results are primarily existential and do not provide explicit or computable estimates of the associated error bound constants. To the best of our knowledge, the problem of systematically estimating the constant $\kappa_B$ in H\"{o}lder error bounds for semidefinite feasibility problems, particularly in the absence of {constraint qualifications} such as Slater's condition, remains largely open. 
Given the success of explicit estimates for Hoffman constants in analyzing linear convergence rates and studying sensitivity in polyhedral convex programs~\cite{LL2010,WL2014,BS2017,NNG2019,AHL+2023}, analogous explicit estimates of error bound constants could similarly facilitate more informative convergence analysis and perturbation bounds for numerical algorithms solving semidefinite feasibility problems or SDPs. 
This naturally leads to the following research question:
\emph{In the absence of {constraint qualifications} such as Slater's condition, can one obtain effective explicit estimates of the error bound constants in H\"{o}lder error bounds for semidefinite feasibility problems? If so, how tight are these estimates, particularly as the dimension of the underlying matrix space increases?} 

Let $\calS_+^n$ denote the cone of positive semidefinite matrices in the space $\calS^n$ of real $n\times n$ symmetric matrices.
The purpose of this paper is to address the above questions by studying the following \emph{radial-type H\"{o}lder error bounds} with exponent $\gamma \in (0,1]$: 
\begin{equation*}
\dist(\bm{X},\calV \cap \mathcal{S}_+^n) \le \mu(\|\bm{X}\|_{\rm F})\, \calE_{\rm b}(\bm{X})^{\gamma}  
\text{ for all  $\bm{X} \in \mathcal{Q}$},
\end{equation*}
or, more generally, with the form 
\begin{equation}
\dist(\bm{X},\calV \cap \mathcal{S}_+^n) \le \sum_{j=0}^{\ell} \mu_j(\|\bm{X}\|_{\rm F}) \, \calE_{\rm b}(\bm{X})^{\gamma_j} \text{ for all $\bm{X} \in \mathcal{Q}$}. \label{eq:radial_Holder_eb} 
\end{equation}
Here, $\|\cdot\|_{\rm F}$ denotes the Frobenius norm, $\mathcal{Q}$ is a given subset of $\mathcal{S}^n$, $\ell$ is a nonnegative integer, $\gamma_j$ is a positive constant with $\gamma_j \le \gamma$ for every $j=0,\ldots,\ell$, and $\mu_j$, $j=0,\ldots,\ell$, are   nonnegative nondecreasing functions, which we call \emph{radial modulus functions}.
In particular, our main aim is to derive radial-type H\"{o}lder error bounds with explicit exponents $\gamma_0,\dots,\gamma_{\ell}$ and explicit form of the radial modulus functions $\mu_0,\dots,\mu_{\ell}$ in the absence of {constraint qualifications}.

It is worth emphasizing that the notion of a radial-type error bound unifies both local and global error bounds.
In particular, when $\ell = 0$, $\gamma_0 = \gamma$, $\mathcal{Q} = \mathcal{S}^n$, and $\mu_0$ is a constant function, the above condition reduces to a global error bound.
On the other hand, since each radial modulus function $\mu_j$ is nondecreasing, the bound immediately yields a local error bound with an explicit constant (depending on $\rho$) over any bounded test set contained in the ball of radius $\rho$.
We also mention that we do not restrict ourselves to the case where $\mathcal{Q}$ is bounded. In fact, our main focus and result are on the cases where the test set $\mathcal{Q}$ is the ambient space $\mathcal{S}^n$.
In line with existing literature, we also consider the cases where $\mathcal{Q}=\{\bm{X}\in \calS^n \mid \calE_{\rm b}(\bm{X}) \le \lambda\}$ where $\lambda$ is a given number. 

The main contributions of this paper are as follows.
\begin{enumerate}[label=(\arabic*)]
\item Firstly, we provide explicit computable forms for the radial modulus functions $\mu_0,\dots,\mu_{\ell}$.
We achieved these by exploiting the facial reduction process together with recently developed facial residual functions \cite{LLP2023}. This approach yields new qualitative radial-type H\"{o}lder error bounds in the form of \eqref{eq:radial_Holder_eb} where the test set is $\mathcal{Q}=\mathcal{S}^n$, without assuming any {constraint qualifications} (Theorems~\ref{thm:eb_alpha_beta} and \ref{thm:error_bound}).
As a consequence, in line with the existing literature, we derive radial-type H\"{o}lder error bounds in the form of \eqref{eq:radial_Holder_eb} where the test set is $\mathcal{Q}=\{\bm{X} \in \calS^n \mid \calE_{\rm b}(\bm{X}) \le 1\}$ (Corollary~\ref{cor:error_bound_constant}).
This complements the classical work of Sturm \cite{Sturm2000} by providing explicit estimates for the constants involved in the H\"{o}lder error bounds over bounded sets with given sizes.

Interestingly, the derived radial modulus functions depend explicitly on a quantity $d^*$ called the \emph{distance to partial polyhedral Slater's (PPS) condition}, which is a variant of the concept of singularity degree in the literature and relates to the number of steps of a certain facial reduction algorithm. 

\item Secondly, we investigate the asymptotic tightness of the derived error bounds.
To formalize this notion, we introduce in Definition~\ref{def:tight_eb} the concept of being \emph{asymptotically tight up to a dimension-free constant}. Consider a family of semidefinite feasibility problems in which the dimension of the underlying matrix space varies.
Roughly speaking, a derived error bound is asymptotically tight up to a dimension-free constant $C \in (0,1]$ if it is essentially optimal up to the multiplicative factor $C$.
This means that one cannot obtain a strictly sharper error bound by uniformly scaling the radial modulus functions by any dimension-independent constant factor strictly smaller than $C$.

Our investigation of asymptotic tightness is based on the values of $d^*$, the distance to the PPS condition of the feasibility problem.
We first focus on instances where $d^*$ are $0$ and $1$, respectively, and show in Sections~\ref{sec:dPPS_0} and \ref{sec:dPPS_1} that the corresponding error bounds can be asymptotically tight up to a dimension-free constant.
On the other hand, for the semidefinite feasibility problem presented in \cite[Example~2]{Sturm2000} (which satisfies $d^*=n-1$), our explicit error bound is not asymptotically tight up to a dimension-free constant. 

\item Thirdly, as an application, we derive explicit error bounds for the optimality system of SDPs by reformulating it as a semidefinite feasibility problem, a setting in which Slater's condition typically fails. {Without assuming solution uniqueness that the existing literature requires, see, for example, \cite[Section~1]{DU2021} and  \cite[Theorem~4.1]{DYC+2021} (and also the related reference  \cite{NO1999})}, we establish explicit radial-type error bounds under the strict complementarity condition, which holds generically~\cite{PT2002,AHO1997}. 
Moreover, we show, through an explicit example, that the derived bound can be asymptotically tight in this setting. 
\end{enumerate}

The organization of this paper is as follows.
In Section~\ref{sec:preliminaries}, we recall and introduce notation and concepts used in the subsequent sections.
In Section~\ref{sec:general}, we derive qualitative radial-type H\"{o}lder error bounds for semidefinite feasibility problems.
In Section~\ref{sec:tight}, we discuss the asymptotic tightness of the error bound presented in the previous section.
In Section~\ref{subsec:optset_SDP}, as an application of the result of Section~\ref{sec:general}, we obtain explicit error bounds for the optimality system of SDPs satisfying the strict complementarity condition.
In Section~\ref{sec:conclusion}, we finish with concluding remarks.

\section{Preliminaries}\label{sec:preliminaries}
In this section, we recall some preliminaries and basic definitions as well as existing tools that will be used in the next few sections. 
\subsection{Notation}
For an integer $n$, we write $\bbZ_{\ge n}$ for the set of integers greater than or equal to $n$.
Let $V$ be a finite-dimensional Euclidean space and denote the corresponding norm by $\norm{\cdot}$.
For $S \subseteq V$, we use $\setint S$, $\ri S$, $\setspan S$, and $S^\perp$ to denote the interior of $S$, the relative interior of $S$, the smallest subspace containing $S$, and the orthogonal complement of $S$, respectively.
For a closed convex set $S$ in $V$ and $x \in V$, we define $\dist(x,S) \coloneqq \min\{\norm{x-y} \mid y\in S\}$ and the projection of $x$ onto $S$ as $P_S(x) \coloneqq \argmin\{\norm{x - y} \mid y\in S\}$.
For $c \in V$ and $r \ge 0$, the closed ball with center $c$ and radius $r$ is denoted by
$\mathbb{B}(c,r) \coloneqq \{x \in V \mid \|x - c\| \le r\}$.
For a linear mapping $\calA$, we use $\calA^*$ to denote the adjoint of $\calA$.

Vectors are denoted by boldfase lowercase letters such as $\bm{a}$.
The space of $n$-dimensional real vectors is denoted by $\bbR^n$ and the set of entrywise nonnegative vectors in $\bbR^n$ is denoted by $\bbR_+^n$.
For a vector $\bm{a} \in \bbR^n$, we write $\bm{a}^\top$ for the transpose of $\bm{a}$ and we use $\norm{\bm{a}}_2 \coloneqq \sqrt{\bm{a}^\top \bm{a}}$ to denote the $2$-norm of $\bm{a}$.

Matrices are denoted by boldface uppercase letters such as $\bm{A}$.
The space of real $m\times n$ matrices is denoted by $\bbR^{m\times n}$ and the space of real $n\times n$ symmetric matrices is denoted by $\calS^n$.
For $\bm{A}\in \calS^n$, we define $\bbR\bm{A} \coloneqq \{t\bm{A} \mid t\in \bbR\}$ and $\bbR_+\bm{A} \coloneqq \{t\bm{A} \mid t\in \bbR_+\}$, respectively.
We write the $(i,j)$th element of a matrix $\bm{A}$ as $A_{i,j}$.
When there is no ambiguity we will abbreviate $A_{i,j}$ as $A_{ij}$.
The zero matrix is denoted by $\bm{O}$ and the identity matrix of order $n$ is denoted by $\bm{I}_n$.
We use $\bm{E}_{i,j}$ or $\bm{E}_{ij}$ to denote the matrix whose $(i,j)$th element is $1$ and all other elements are $0$.
Its order is determined from the context.
For $a_1,\dots,a_n \in \bbR$, we use $\Diag(a_1,\dots,a_n)$ to denote the diagonal matrix whose $(i,i)$th element is $a_i$ for each $i = 1,\dots,n$.
Similarly, for real symmetric matrices $\bm{A}_1,\dots,\bm{A}_n$, the block diagonal matrix whose $i$th block is $\bm{A}_i$ for each $i = 1,\dots,n$ is denoted by $\Diag(\bm{A}_1,\dots,\bm{A}_n)$.
For a subset $\calC$ of $\calS^m$, we define $\calC \oplus \{0\}^n \coloneqq \{\Diag(\bm{A},\bm{O}) \in \calS^{m+n} \mid \bm{A} \in \calC\}$.
The set $\{0\}^n \oplus \calC$ is also defined analogously.
For a matrix $\bm{A} \in \bbR^{m\times n}$, we use $\bm{A}^\top$ to denote the transpose of $\bm{A}$ and write $\tr(\bm{A})$ for the trace of $\bm{A}$.
For matrices $\bm{A},\bm{B} \in \bbR^{m\times n}$, we define $\langle \bm{A},\bm{B} \rangle \coloneqq \tr(\bm{B}^\top \bm{A})$, which is an inner product on $\bbR^{m\times n}$.
The Frobenius norm, which is the norm induced by this inner product, is denoted by $\norm{\cdot}_{\rm F}$.
For a matrix $\bm{A} \in \calS^n$, we use $\lambda_{\rm max}(\bm{A})$ and $\lambda_{\rm min}(\bm{A})$ to denote the maximum and minimum eigenvalues of $\bm{A}$, respectively.
In addition, $\lambda_{\rm min}^+(\bm{A})$ denotes the smallest positive eigenvalue of $\bm{A}$ if $\bm{A}$ has at least one positive eigenvalue.
Otherwise, we let $\lambda_{\rm min}^+(\bm{A}) \coloneqq +\infty$.
Furthermore, we use the convention that $\frac{a}{+\infty} = 0$ for every $a\in \bbR$.

\subsection{Positive semidefinite cones and their faces}
Similar to \cite{Lourenco2021,LLP2023}, our development heavily relies on the facial structure of the underlying cone.
Here we present a brief review of positive semidefinite cones and their faces.

Recall that $\calS_+^n$ denotes the cone of positive semidefinite matrices in $\calS^n$, which we refer to as the positive semidefinite cone.
The formula for the distance from a point $\bm{X} \in \calS^n$ to the cone $\calS_+^n$ is well known~\cite[page~\mbox{399}]{BV2004} and is given by
\begin{equation}
\dist(\bm{X},\calS_+^n) = \sqrt{\sum_{i=1}^n \max\{-\lambda_i,0\}^2}, \label{eq:dist_PSDcone}
\end{equation}
where $\lambda_1,\dots,\lambda_n$ are the eigenvalues of $\bm{X}$.
Moreover, it is easy to verify that, for any matrix $\bm{X}\in \calS^n$ and any principal submatrix $\widehat{\bm{X}} \in \calS^d$, the following inequality holds:
\begin{equation}
\dist(\widehat{\bm{X}},\calS_+^d) \le \dist(\bm{X},\calS_+^n). \label{eq:ineq_dist_PSDcone}
\end{equation}

We recall that a nonempty (closed) convex subcone $\calF$ of $\calS_+^n$ is called a \emph{face} if for all $\bm{A},\bm{B}\in \calS_+^n$, we have $\bm{A},\bm{B} \in \calF$ whenever $\bm{A} + \bm{B} \in \calF$ holds.
For a face $\calF$ of $\calS_+^n$, we define $\calF^*$ as the \emph{dual cone} of $\calF$, i.e., the set of $\bm{A}\in \calS^n$ such that $\langle \bm{A},\bm{B}\rangle \ge 0$ for all $\bm{B} \in \calF$.
A \emph{chain of faces} of $\calS_+^n$ is a sequence of faces $\calF_1,\dots,\calF_{\ell}$ of $\calS_+^n$, denoted by $\calF_{\ell} \subsetneq \cdots \subsetneq \calF_1$, such that the strict inclusion $\calF_{i+1} \subsetneq \calF_i$ holds for every $i = 1,\dots,\ell-1$.

A set of the form $\calS_+^n \cap \{\bm{Z}\}^\perp$ for some $\bm{Z} \in \calS_+^n$ is a face of the cone $\calS_+^n$, and we say that the face is \emph{exposed} by the matrix $\bm{Z}$.
Every face of the positive semidefinite cone is linearly isomorphic to a positive semidefinite cone of smaller order; see \cite[Lemma~4]{BC1975}, \cite[Theorem~3.6]{HW1987}, and \cite[Example~3.2.2]{Pataki2000}.
In particular, for a face $\calF$ of $\calS_+^n$, there exist an integer $d$ with $0\le d\le n$ and an orthogonal matrix $\bm{P}$ of order $n$ such that
\begin{equation*}
\calF = \bm{P} (\calS_+^d \oplus \{0\}^{n-d}) \bm{P}^\top.
\end{equation*}
When $d\ge 1$, by dropping the last $(n-d)$ columns of $\bm{P}$, we can also represent the face $\calF$ as $\bm{Q}\calS_+^d\bm{Q}^\top$, where $\bm{Q} \in \bbR^{n \times d}$ is a matrix whose column vectors are orthonormal.
When $d = 0$, the face $\calF$ is the singleton $\{\bm{O}\}$.

Recall that $\Feas(\calV,\calS_+^n)$ denotes the problem of finding an element of the intersection of an affine space $\calV$ in $\calS^n$ and the positive semidefinite cone $\calS_+^n$.
The following result, which was established in \cite[Proposition~5]{Lourenco2021}, describes a chain of faces associated with a feasible semidefinite feasibility problem.
\begin{theorem}\label{thm:chain_faces}
For any feasible semidefinite feasibility problem $\Feas(\calV,\calS_+^n)$, there exists a chain of faces of $\calS_+^n$, denoted by
\begin{equation}
\calF_{\ell} \subsetneq \cdots \subsetneq \calF_1 = \calS_+^n, \label{eq:chain_faces}
\end{equation}
together with matrices $\bm{Z}_1,\dots,\bm{Z}_{\ell-1} \in \calS^n$ such that the following conditions hold:
\begin{enumerate}[label=(\roman*), ref=\roman*, font=\upshape]
\item $\bm{Z}_i \in \calF_i^* \cap \calV^\perp$ and  $\calF_{i+1} = \calF_i \cap \{\bm{Z}_i\}^\perp$ hold for each $i = 1,\dots,\ell - 1$. \label{enum:cond_Z}
\item The face $\calF_{\ell}$ is polyhedral or the set $\calV \cap \ri\calF_{\ell}$ is nonempty. \label{enum:PPS}
\end{enumerate}
Moreover, any such chain necessarily satisfies $\ell \le n$.
\end{theorem}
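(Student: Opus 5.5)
We construct the chain greedily by a facial reduction procedure, and the bound $\ell\le n$ comes from the dimensions of the faces.

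\emph{Construction.} Set $\calF_1 = \calS_+^n$. Suppose $\calF_i$ has been constructed as a face with $\calV\cap\calF_i \neq \emptyset$; at $i=1$ this holds because the problem is assumed feasible. If $\calF_i$ is polyhedral or $\calV\cap\ri\calF_i\neq\emptyset$, we stop with $\ell = i$, and condition \eqref{enum:PPS} holds.

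Otherwise $\calF_i$ is not polyhedral and $\calV\cap\ri\calF_i=\emptyset$. We then use the standard facial reduction lemma (a theorem of the alternative, proved by a separation argument). For a closed convex cone $\calK$ with $\calV\cap\calK\neq\emptyset$ and $\calV\cap\ri\calK=\emptyset$, it provides $\bm{Z}\in\calK^*\cap\calV^\perp$ such that $\bm{Z}\notin \calK^\perp$ and $\langle \bm{Z},\bm{X}\rangle = 0$ for all $\bm{X}\in\calV$.

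To prove this lemma, separate the affine set $\calV$ from the relatively open convex set $\ri\calK$. Proper separation (Rockafellar, Theorem~11.3) gives a $\bm{Z}$ and a scalar $\alpha$ with $\langle\bm{Z},\bm{X}\rangle\le\alpha\le\langle\bm{Z},\bm{Y}\rangle$ for $\bm{X}\in\calV$ and $\bm{Y}\in\calK$. The separation is proper, so $\bm{Z}$ is not constant on $\calV\cup\calK$.
\begin{itemize}
\item A linear functional bounded above on an affine set is constant there. Hence $\bm{Z}\in(\calV-\calV)^\perp$, and $\langle \bm{Z},\cdot\rangle\equiv c$ on $\calV$ with $c\le\alpha$.
\item A linear functional bounded below on a cone must be nonnegative on it and have infimum $0$. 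This gives $\bm{Z}\in\calK^*$ and $\alpha\le 0$.
\item A common point of $\calV\cap\calK$ then forces $c=\alpha=0$.
\item Since $\bm{Z}$ is not identically constant on $\calV\cup\calK$ and vanishes on $\calV$, it cannot vanish on all of $\calK$.
\end{itemize}
This interpretation of $\bm{Z}\in\calV^\perp$ as "$\langle\bm{Z},\bm{X}\rangle=0$ for all $\bm{X}\in\calV$" is the one consistent with $\calV$ being affine, and I use it throughout.

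Apply the lemma with $\calK=\calF_i$, and set $\bm{Z}_i=\bm{Z}$ and $\calF_{i+1}=\calF_i\cap\{\bm{Z}_i\}^\perp$. This set is a face of $\calF_i$ because $\bm{Z}_i\in\calF_i^*$. A face of a face is a face, so $\calF_{i+1}$ is a face of $\calS_+^n$. The inclusion $\calF_{i+1}\subsetneq\calF_i$ is strict because $\bm{Z}_i\notin\calF_i^\perp$. Feasibility is preserved: $\calV\cap\calF_i\subseteq\{\bm{Z}_i\}^\perp$ since $\bm{Z}_i$ vanishes on $\calV$, and therefore $\calV\cap\calF_{i+1}=\calV\cap\calF_i\neq\emptyset$. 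Condition \eqref{enum:cond_Z} holds by construction.

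\emph{Length bound.} This is the step needing care. Write $\calF_i \cong \calS_+^{d_i}$, using the representation recalled above. A proper subface of $\calS_+^{d}$ is isomorphic to $\calS_+^{d'}$ with $d'<d$, so the $d_i$ strictly decrease along the chain. The process therefore terminates, but this alone only gives $\ell\le n+1$, since $d_1=n$ and $d_\ell\ge 0$.

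To sharpen the count, observe that the faces with $d\le 1$ are $\{\bm{O}\}$ and the rays. These are polyhedral, so the procedure stops as soon as $d_i\le1$. Hence every index $i<\ell$ satisfies $d_i\ge 2$. From $d_1=n$, $d_{\ell-1}\ge2$ and strict decrease we get $\ell-1\le n-1$, that is, $\ell\le n$. For $n=1$ the cone $\calS_+^1$ is itself polyhedral, so $\ell=1$.

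The claim "any such chain necessarily satisfies $\ell\le n$" applies to every chain satisfying \eqref{enum:cond_Z}–\eqref{enum:PPS}, not only the greedy one. It follows from the same dimension count provided every non-final face in the chain has $d_i\ge2$. An arbitrary chain may, however, pass through a ray $\calF_{\ell-1}$ and end at $\calF_\ell=\{\bm{O}\}$. A chain $\calS_+^n\supsetneq\cdots$ with $d_i=n,n-1,\dots,1,0$ has length $n+1$ and satisfies \eqref{enum:cond_Z} whenever the corresponding $\bm{Z}_i$ exist.

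So the main obstacle is to rule out this extra step, or to show it is incompatible with feasibility and \eqref{enum:cond_Z}. My first attempt would be this. If $\calF_{\ell-1}$ is a ray $\bbR_+\bm{A}$ and $\bm{Z}_{\ell-1}\in\calV^\perp$ with $\langle \bm{Z}_{\ell-1},\bm{A}\rangle>0$, then the only point of $\calF_{\ell-1}$ lying in $\{\bm{Z}_{\ell-1}\}^\perp$ is $\bm{O}$. Feasibility of $\calV\cap\calF_{\ell-1}$ is not guaranteed for an arbitrary chain. If this cannot be excluded, I would prove the bound for the chains actually produced (equivalently, for chains in which each $\bm{Z}_i$ is applied only to a nonpolyhedral $\calF_i$), as in \cite[Proposition~5]{Lourenco2021}.
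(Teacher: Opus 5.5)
Your construction is correct, and it is the argument the paper imports by citation. The separation lemma plus greedy reduction is the facial reduction behind \cite[Proposition~5]{Lourenco2021}. Your count (faces of order at most $1$, i.e.\ $\{\bm{O}\}$ and rays, are polyhedral, so every non-final face of the greedy chain has order at least $2$) is what \cite[Example~1]{LMT2018} supplies.

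The genuine gap is the clause that \emph{any} chain satisfying (i)--(ii) has $\ell\le n$, which you leave open. It cannot be closed, because as literally stated it is false. Let $\calV$ be the subspace of matrices in $\calS^n$ with zero diagonal (or simply $\calV=\{\bm{O}\}$). Then $\calV\cap\calS_+^n=\{\bm{O}\}$, so the problem is feasible, and every diagonal matrix lies in $\calV^\perp$. Take $\calF_1=\calS_+^n$, $\calF_i=\calS_+^{n-i+1}\oplus\{0\}^{i-1}$ for $2\le i\le n$, $\calF_{n+1}=\{\bm{O}\}$, and $\bm{Z}_i=\bm{E}_{n-i+1,n-i+1}\in\calF_i^*\cap\calV^\perp$. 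This chain satisfies (i) and (ii) with $\ell=n+1$.

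Your side remark that feasibility of $\calV\cap\calF_{\ell-1}$ is not guaranteed is also incorrect. Along any chain satisfying (i), $\calV\subseteq\{\bm{Z}_i\}^\perp$ gives $\calV\cap\calF_{i+1}=\calV\cap\calF_i=\calV\cap\calS_+^n\neq\emptyset$. Feasibility simply does not exclude the extra step. It only forces $\calV\cap\calS_+^n=\{\bm{O}\}$, which is exactly the situation in the example above.

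So the clause has to be restricted, and your fallback is the right repair: $\ell\le n$ holds for every chain whose faces $\calF_1,\dots,\calF_{\ell-1}$ are non-polyhedral. This covers every shortest chain. If some $\calF_j$ with $j<\ell$ satisfied (ii), truncating at $\calF_j$ (keeping $\bm{Z}_1,\dots,\bm{Z}_{j-1}$) would give a shorter chain satisfying (i)--(ii). Hence every non-final face of a shortest chain is non-polyhedral, so has order at least $2$, and your count gives $\ell\le n$. This is the only consequence the paper actually uses later, namely $d_{\rm PPS}(\calV,\calS_+^n)\le n-1$.
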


The inequality $\ell \le n$ given in Theorem~\ref{thm:chain_faces} follows from \cite[Example~1]{LMT2018}.
For the chain of faces in \eqref{eq:chain_faces}, it follows from \eqref{enum:cond_Z} of Theorem~\ref{thm:chain_faces} that $\calV \cap \calS_+^n = \calV \cap \calF_{\ell}$.
This implies that, after the facial reduction steps, we can find a face $\calF_{\ell}$ such that the {constraint qualification} shown in \eqref{enum:PPS} of Theorem~\ref{thm:chain_faces} is satisfied and the set of feasible solutions is invariant.
The condition in \eqref{enum:PPS} is called the \emph{partial polyhedral Slater's (PPS) condition} in \cite{Lourenco2021,LLP2023}.
The \emph{distance to the PPS condition} of the feasibility problem $\Feas(\calV,\calS_+^n)$, denoted by $d_{\rm PPS}(\calV,\calS_+^n)$, is defined as $\ell-1$, where $\ell$ is the length of a shortest chain of the form \eqref{eq:chain_faces} satisfying the conditions in Theorem~\ref{thm:chain_faces}.
By Theorem~\ref{thm:chain_faces}, the quantity $d_{\rm PPS}(\calV,\calS_+^n)$ satisfies $d_{\rm PPS}(\calV,\calS_+^n) \le n-1$.
In addition, it is a variant of the \emph{singularity degree} of the problem $\Feas(\calV,\calS_+^n)$ used in literature.
More precisely, the singularity degree of the problem $\Feas(\calV,\calS_+^n)$, denoted by $d_{\rm s}$, is defined as $\overline{\ell}-1$, where $\overline{\ell}$ is the length of a shortest chain in \eqref{eq:chain_faces} satisfying the condition in \eqref{enum:cond_Z} and the condition that the set $\calV \cap \ri\calF_{\ell}$ is nonempty.
From the definitions, it is clear that $d_{\rm PPS}(\calV,\calS_+^n) \le d_{\rm s}$ always holds.
{Indeed,} for semidefinite feasibility problems with $n\ge 2$, the equation
\begin{equation}
d_{\rm PPS}(\calV,\calS_+^n) = d_{\rm s} \label{eq:dpps_eq_ds}
\end{equation}
holds as shown in Appendix~\ref{apdx:dpps_eq_ds}. {
Using $d_{\rm PPS}$ is typically more advantageous when the underlying cone is a direct product. Nevertheless, we use $d_{\rm PPS}$ here in order to maintain consistency with \cite{Lourenco2021,LLL+2024}}.
For more details {on the distance to the PPS condition and the singularity degree}, see \cite[Section~2.4.1]{Lourenco2021} and references therein.

For each $i = 1,\dots,\ell-1$, the matrix $\bm{Z}_i$ shown above can be taken to have norm $1$.
By the strict inclusion $\calF_{i+1} \subsetneq \calF_i$, we have $\bm{Z}_i \neq \bm{O}$, i.e., $\norm{\bm{Z}_i}_{\rm F} \neq 0$. 
Then it follows from the condition in \eqref{enum:cond_Z} of Theorem~\ref{thm:chain_faces} that $\bm{Z}_i / \norm{\bm{Z}_i}_{\rm F} \in \calF_i^* \cap \calV^\perp$ and $\calF_{i+1} = \calF_i \cap \{\bm{Z}_i / \norm{\bm{Z}_i}_{\rm F}\}^\perp$ hold.
Therefore, by normalizing if necessary, we may assume that $\norm{\bm{Z}_i}_{\rm F} = 1$ without loss of generality.

Let $\calF$ be a face of $\calS_+^n$ and let $\bm{Z} \in \calF^*$.
A \emph{one-step facial residual function for $\calF$ and $\bm{Z}$} is a function $\psi_{\calF,\bm{Z}}\colon \bbR_+\times \bbR_+ \to \bbR_+$ satisfying the following three conditions:
\begin{enumerate}[label=(\roman*), ref=\roman*]
\item $\psi_{\calF,\bm{Z}}$ is nondecreasing in each argument.
\item $\psi_{\calF,\bm{Z}}(0,t) = 0$ for any $t \in \bbR_+$.
\item For any $\bm{X} \in \setspan\calF$ and any $\epsilon \in \bbR_+$, if $\dist(\bm{X},\calF) \le \epsilon$ and $\langle \bm{Z},\bm{X}\rangle \le \epsilon$, then we have
\begin{equation*}
\dist(\bm{X},\calF \cap \{\bm{Z}\}^\perp) \le \psi_{\calF,\bm{Z}}(\epsilon,\lVert \bm{X}\rVert_{\rm F}).
\end{equation*}
\end{enumerate}
As mentioned in \cite[Example~3.6]{LLP2023}, there exist nonnegative $\alpha$ and  $\beta$, which depend on $\calF$ and $\bm{Z}$, such that
\begin{equation}
\psi_{\calF,\bm{Z}}(s,t) = \alpha s + \beta\sqrt{st} \label{eq:1-FRF_PSD}
\end{equation}
is a one-step facial residual function for $\calF$ and $\bm{Z}$.
In Section~\ref{subsec:1-FRF}, we will compute the constants $\alpha$ and $\beta$ explicitly, and discuss how these constants relate to the choice of $\bm{Z}$.
Indeed, the one-step facial residual function introduced in \cite{LLP2023} is of the form $\alpha s + \alpha\sqrt{st}$ for some positive $\alpha$, corresponding to the case $\alpha = \beta$. Here, we extend this definition by allowing the two constants to be independent, which enables us to derive sharper estimates later.

\subsection{Error bounds}\label{subsec:error_bound}
In this subsection, we review the results of Lipschitz error bounds for polyhedra, and introduce the notion of asymptotic tightness of error bounds.
Without loss of generality, we only consider the case that polyhedra are subsets of $\bbR^n$.
The celebrated Hoffman's error bound result~\cite{Hoffman1952} states that for a given matrix $\bm{A}\in \bbR^{m\times n}$ whose $i$th row is denoted by $\bm{a}_i^\top$, there exists a nonnegative constant $\alpha$ such that for any $\bm{b} = (b_1,\dots,b_m)^\top \in \bm{A}\bbR^n + \bbR_+^m$ (namely $\bm{b}$ such that the linear inequality system $\bm{A}\bm{y} \le \bm{b}$ has a solution), it follows that
\begin{equation}\label{eq:Hoffman0}
\dist(\bm{x}, \{\bm{y}\in \bbR^n \mid \bm{A}\bm{y} \le \bm{b}\}) \le \alpha \sqrt{\sum_{i=1}^m\max\{\bm{a}_i^\top\bm{x} - b_i,0\}^2} \text{ for all $\bm{x} \in \mathbb{R}^n$}.
\end{equation}
We call the smallest nonnegative constant $\alpha$ for which the error bound in \eqref{eq:Hoffman0} holds the \emph{Hoffman constant} and write it as $H(\bm{A})$.

Let $\bm{A}_J$ be the submatrix obtained by extracting the rows of $\bm{A}$ indexed by a subset $J$.
In addition, we use $\bbR^J$ and $\bbR_+^J$ to denote the space of real vectors whose entries are indexed by the elements in $J$ and the set of nonnegative vectors in $\bbR^J$, respectively.
It is shown in \cite[Proposition~2]{PVZ2021} that if we set
\begin{equation}
H(\bm{A}) = \max_{\substack{J \subseteq \{1,\dots,m\},\\ \bm{A}_J\bbR^n + \bbR_+^J = \bbR^J}} \frac{1}{\displaystyle\min_{\substack{\bm{v}\in \bbR_+^J,\\ \norm{\bm{v}}_2 = 1}} \norm{(\bm{A}_J)^\top \bm{v}}_2}, \label{eq:Hoffman_characterization}
\end{equation}
with the convention that the fraction inside the maximum is zero when $J = \emptyset$, then the constant $H(\bm{A})$ satisfies the error bound in \eqref{eq:Hoffman0} and it is indeed the smallest constant for which the error bound in \eqref{eq:Hoffman0} holds.
The computation of the Hoffman constant shown in \eqref{eq:Hoffman_characterization} is also discussed in \cite[Section~3]{PVZ2021}.
Although various explicit representations of a Hoffman constant, for example, \eqref{eq:Hoffman_characterization} and those provided in \cite{KT1995,GHR1995}, are known, calculating the exact value of the Hoffman constant typically involves significant computational difficulty as discussed in \cite{Pena2024}. 
Meanwhile, some studies discuss algorithmic aspects for calculating them, and effective upper bounds can be found in \cite{KT1995,PVZ20XX_An,PVZ2021,Pena2024}.

For linear equality systems, effective upper bounds for Hoffman constants can be computed via the singular values of the associated coefficient matrix. 
For $\bm{A} \in \bbR^{m\times n}$, we define $\sigma_{\rm min}^+(\bm{A})$ as the smallest positive singular value of $\bm{A}$ when $\bm{A}$ is nonzero, and set $\sigma_{\rm min}^+(\bm{A}) \coloneqq +\infty$ when $\bm{A}$ is the zero matrix.
Then for any $\bm{b}\in \bbR^m$ such that the linear equality system $\bm{A}\bm{y} = \bm{b}$ has a solution, it follows that
\begin{equation}
\dist(\bm{x},\{\bm{y}\in \bbR^n \mid \bm{A}\bm{y} = \bm{b}\}) \le \frac{1}{\sigma_{\rm min}^+(\bm{A})}\norm{\bm{A}\bm{x}-\bm{b}}_{\rm 2} \text{ for all $\bm{x}\in \bbR^n$}.\label{eq:Hoffman_eq}
\end{equation}
This inequality follows from the result on the best approximate solution of a linear equality system~\cite{Penrose1956} and the description of the singular values of a generalized inverse; see also \cite[Section~4]{NNG2019}.

It is well known that a global Lipschitz error bound for the intersection of polyhedral sets always holds, e.g., see \cite[Corollary~5.26]{BB1996}.
Let $C_1$ and $C_2$ be polyhedral sets in $\bbR^n$ and suppose that $C_1 \cap C_2 \neq \emptyset$.
Then there exists a nonnegative constant $\alpha$ such that
\begin{equation}
\dist(\bm{x},C_1\cap C_2) \le \alpha\max\{\dist(\bm{x},C_1),\dist(\bm{x},C_2)\} \text{ for all $\bm{x} \in \bbR^n$}.\label{eq:EB_polysets}
\end{equation}
We write $\eta(C_1,C_2)$ for the smallest nonnegative constant $\alpha$ for which the error bound in \eqref{eq:EB_polysets} holds.
Although computing $\eta(C_1,C_2)$ is also computationally challenging, we can obtain computable upper bounds on $\eta(C_1,C_2)$ by exploiting algebraic descriptions of $C_1$ and $C_2$.
We provide a proof of the following lemma in Appendix~\ref{apdx:proof_lemma}.

\begin{lemma}\label{lem:Hoffman}
Let $C_1$ and $C_2$ be polyhedral sets in $\bbR^n$ with $C_1 \cap C_2 \neq \emptyset$. Let $\eta(C_1,C_2)$ be the smallest nonnegative constant $\alpha$ for which the error bound in \eqref{eq:EB_polysets} holds.
\begin{enumerate}[label=(\roman*), ref=\roman*, font=\upshape]
\item Suppose that for each $i = 1,2$, the polyhedral set $C_i$ is described by a linear inequality system $\bm{A}_i\bm{x} \le \bm{b}_i$ for some $\bm{A}_i \in \bbR^{m_i \times n}$ and $\bm{b}_i \in \bbR^{m_i}$.
Let $\bm{A}$ be the matrix obtained by concatenating the matrix $\bm{A}_1$ with the matrix $\bm{A}_2$.
Then we have $\eta(C_1,C_2) \le H(\bm{A})\norm{\bm{A}}_{\rm F}$. \label{enum:eta_inequality}
\item Suppose that for each $i = 1,2$, the polyhedral set $C_i$ is described by a linear equality system $\bm{A}_i\bm{x} = \bm{b}_i$ for some $\bm{A}_i \in \bbR^{m_i \times n}$ and $\bm{b}_i \in \bbR^{m_i}$.
Let $\bm{A}$ be the matrix constructed in a way as in \eqref{enum:eta_inequality}.
Then we have $\eta(C_1,C_2) \le \norm{\bm{A}}_{\rm F} / \sigma_{\rm min}^+(\bm{A})$.\label{enum:eta_equality}
\end{enumerate}
\end{lemma}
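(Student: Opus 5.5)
The plan is to reduce both parts to a single Hoffman-type bound for the combined system that describes $C_1\cap C_2$, and then to compare the combined residual with $\max\{\dist(\bm{x},C_1),\dist(\bm{x},C_2)\}$.

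\emph{Part~\eqref{enum:eta_inequality}.} Set $\bm{b}$ to be the concatenation of $\bm{b}_1$ and $\bm{b}_2$. Then $C_1\cap C_2=\{\bm{y}\mid \bm{A}\bm{y}\le \bm{b}\}$, and this set is nonempty, so $\bm{b}\in \bm{A}\bbR^n+\bbR_+^{m}$. The Hoffman bound \eqref{eq:Hoffman0} then gives
\[
\dist(\bm{x},C_1\cap C_2)\le H(\bm{A})\sqrt{\sum_{i}\max\{\bm{a}_i^\top\bm{x}-b_i,0\}^2}.
\]
Next, bound each residual term by a distance. Take a row $i$ belonging to the block of $C_k$, and let $\bm{y}=P_{C_k}(\bm{x})$. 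Since $\bm{a}_i^\top\bm{y}\le b_i$, we get
\[
\max\{\bm{a}_i^\top\bm{x}-b_i,0\}\le \max\{\bm{a}_i^\top(\bm{x}-\bm{y}),0\}\le \norm{\bm{a}_i}_2\,\dist(\bm{x},C_k),
\]
where the last step is Cauchy--Schwarz. This is at most $\norm{\bm{a}_i}_2\max\{\dist(\bm{x},C_1),\dist(\bm{x},C_2)\}$. Squaring and summing over all rows produces $\sum_i\norm{\bm{a}_i}_2^2=\norm{\bm{A}}_{\rm F}^2$. Hence the residual is at most $\norm{\bm{A}}_{\rm F}\max\{\dist(\bm{x},C_1),\dist(\bm{x},C_2)\}$. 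Since $\eta$ is the smallest admissible constant, $\eta(C_1,C_2)\le H(\bm{A})\norm{\bm{A}}_{\rm F}$.

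\emph{Part~\eqref{enum:eta_equality}.} The argument is the same with \eqref{eq:Hoffman_eq} in place of \eqref{eq:Hoffman0}. Here $C_1\cap C_2=\{\bm{y}\mid\bm{A}\bm{y}=\bm{b}\}$ is nonempty, so
\[
\dist(\bm{x},C_1\cap C_2)\le \norm{\bm{A}\bm{x}-\bm{b}}_2/\sigma_{\rm min}^+(\bm{A}).
\]
For a row $i$ in the block of $C_k$, again take $\bm{y}=P_{C_k}(\bm{x})$. This gives $|\bm{a}_i^\top\bm{x}-b_i|=|\bm{a}_i^\top(\bm{x}-\bm{y})|\le\norm{\bm{a}_i}_2\dist(\bm{x},C_k)$. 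Summing squares as before yields $\norm{\bm{A}\bm{x}-\bm{b}}_2\le\norm{\bm{A}}_{\rm F}\max\{\dist(\bm{x},C_1),\dist(\bm{x},C_2)\}$, and the claim follows.

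There is no serious obstacle. The only point needing care is the degenerate case $\bm{A}=\bm{O}$ in part~\eqref{enum:eta_equality}. There $C_1\cap C_2=\bbR^n$, so the left-hand side is zero and $\eta=0$. This agrees with the convention $\frac{a}{+\infty}=0$, since $\norm{\bm{A}}_{\rm F}/\sigma_{\rm min}^+(\bm{A})=0/(+\infty)=0$. The analogous degenerate case in part~\eqref{enum:eta_inequality} is covered by the empty-$J$ convention in \eqref{eq:Hoffman_characterization}.
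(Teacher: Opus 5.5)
Your proof is correct and follows essentially the same route as the paper's: apply the Hoffman bound to the concatenated system describing $C_1\cap C_2$, bound each row residual by $\norm{\bm{a}_i}_2$ times a distance, and sum the squares to get $\norm{\bm{A}}_{\rm F}$. The only difference is cosmetic: the paper uses the exact distance formula to each half-space $C_{ij}=\{\bm{x}\mid \bm{a}_{ij}^\top\bm{x}\le b_{ij}\}$ together with $C_i\subseteq C_{ij}$, whereas you bound the residual directly via $P_{C_k}(\bm{x})$ and Cauchy--Schwarz, which also avoids any care about zero rows; you additionally write out part~(ii) and the degenerate case $\bm{A}=\bm{O}$, which the paper only calls ``similar.''
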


\begin{remark}\label{rem:eta}
We note that $\eta(C_1,C_2) \ge 1$ unless $C_1 \cap C_2 = \bbR^n$, i.e., $C_1 = C_2 = \bbR^n$.
Moreover, we also note that $\eta(C_1,C_2) = 1$ if one of the two sets $C_1$ and $C_2$ is included in the other set.
It follows that $\eta(C_1,C_2) = 0$ if $C_1 \cap C_2 = \bbR^n$.
\end{remark}

The error bounds studied in this paper apply to semidefinite feasibility problems of arbitrary matrix order $n$. Consequently, the notion of tightness we adopt below is inherently asymptotic, involving sequences of problems defined over matrix spaces of increasing dimension. Our objective is to characterize the behavior of the associated error bound inequalities as the matrix order $n$ tends to infinity.

More precisely, let $\calV_n$ be an affine space in $\calS^n$.
We consider a family of semidefinite feasibility problems $\Feas(\calV_n,\mathcal{S}^n_+)$ in which the order of the matrix (and so, the dimension of the underlying matrix space) varies.
The corresponding backward error is $\mathcal{E}_{\rm b}^{(n)}(\bm{X})\coloneqq\max\{\dist(\bm{X},\calV_n), \dist(\bm{X},\mathcal{S}^n_+)\}$.
For each $n\in \bbZ_{\ge 1}$, define the residual function $g_n$ by 
\begin{equation*}
g_n(\bm{X}) \coloneqq
\sum_{j=0}^{\ell_n} \mu_{j,n}(\|\bm{X}\|_{\rm F})\,\calE_{\rm b}^{(n)}(\bm{X})^{\gamma_{j,n}},
\end{equation*}
where $\ell_n \in \bbZ_{\ge 0}$, $\mu_{j,n}\colon\mathbb{R}_+ \to \mathbb{R}_{+}$ is a nondecreasing function for each $j=0,\ldots,\ell_n$, and $\gamma_{j,n}>0$.
Suppose that there exist a positive integer $n_0$ and a set $\calQ_n$ in $\calS^n$ parametrized by every $n\in \bbN_{\ge n_0}$ such that the following error bounds hold:
\begin{equation}\label{eq:eb_SFP}
\dist(\bm{X},\calV_n \cap \mathcal{S}^n_+) \le g_n(\bm{X}) \text{ for all $n\in \bbN_{\ge n_0}$ and for all $\bm{X} \in \calQ_n$}.
\end{equation}
Under this assumption, 
$g_n$ is a nonnegative function such that $g_n(\bm{X})=0$ if and only if $\bm{X} \in \mathcal{V}_n \cap \mathcal{S}^n_+$. 
As we will see later in Corollary~\ref{cor:error_bound_constant}, error bounds as in   \eqref{eq:eb_SFP} always hold for {semidefinite feasibility problems}. With that, we have the following definition of tightness.

\begin{definition}[\textbf{Asymptotic tightness for error bounds}]\label{def:tight_eb}
We say that the error bound in \eqref{eq:eb_SFP} is \emph{asymptotically tight up to a dimension-free constant $C_0 \in (0,1]$} if there exist a sequence $(n_k)$ in $\bbZ_{\ge n_0}$ satisfying $\lim_{k\to \infty} n_k = \infty$ and a sequence $(\bm{X}_k)$ with  $\bm{X}_k \in \calQ_{n_k} \backslash (\calV_{n_k} \cap \mathcal{S}^{n_k}_+)$ for every $k$ such that
\begin{equation*}
\liminf_{k \to \infty}\frac{\dist(\bm{X}_k,\calV_{n_k} \cap \mathcal{S}^{n_k}_+)}{g_{n_k}(\bm{X}_k)} \ge C_0.
\end{equation*}
Moreover, we say that the error bound in \eqref{eq:eb_SFP} is \emph{asymptotically tight up to a dimension-free constant} if there exists a constant $C_0 \in (0,1]$, independent of the dimension, such that the bound is asymptotically tight with constant $C_0$.
\end{definition}

\begin{remark}[\textbf{Intuition behind Definition~\ref{def:tight_eb}}]\label{rem:int}
If the error bound in \eqref{eq:eb_SFP} is asymptotically tight up to a dimension-free constant $C_0 \in (0,1]$, this then implies that for any constant $C \in (0,C_0)$, the following inequality cannot be true:
\begin{equation*}
\dist(\bm{X},\calV_n \cap  \mathcal{S}^{n}_+) \le C g_n(\bm{X}) \text{ for all  $n\in \bbZ_{\ge n_0}$  and for all $\bm{X} \in \calQ_n$}.
\end{equation*}
In other words, in this case, for any $C \in (0,C_0)$, an improved version of error bound results with the residual function $g_n$ scaling with a constant contracting factor $C$ is not possible. 
\end{remark}

\section{Radial-type error bounds for semidefinite feasibility problems}\label{sec:general}
In this section, we derive a qualitative radial-type H\"{o}lder error bound for the semidefinite feasibility problem $\Feas(\calV,\calS_+^n)$ \emph{without assuming any constraint qualifications}.
Our qualitative explicit estimate relies on the facial reduction algorithm.
For simplicity, we write $d^*$ for $d_{\rm PPS}(\calV,\calS_+^n)$, the distance to the PPS condition of the feasibility problem $\Feas(\calV,\calS_+^n)$.
Then, from Theorem~\ref{thm:chain_faces} there exists a chain
\begin{equation}\label{eq:chain}
\calF_{d^*+1} \subsetneq \cdots \subsetneq \calF_1
\end{equation}
 of faces of $\calS_+^n$, such that $\calF_1 = \calS_+^n$, $\bm{Z}_i \in \calF_i^* \cap \calV^\perp$ and $\calF_{i+1} = \calF_i \cap \{\bm{Z}_i\}^\perp$ for each $i = 1,\dots,d^*$, and the problem $\Feas(\calV,\calF_{d^*+1})$ satisfies the PPS condition. 
As mentioned in Section~\ref{subsec:error_bound}, we may assume that $\lVert\bm{Z}_i\rVert_{\rm F} = 1$ for all $i = 1,\dots,d^*$.
Throughout this section, we assume that the $\calF_i$'s and $\bm{Z}_i$'s are given in \eqref{eq:chain}, and we shall fix them and utilize them to derive the desired radial-type error bound.
 
The underlying ideas of the derivation are intuitive, though the argument is somewhat technical.
To streamline the exposition, we first present a roadmap of the proof.
\emph{To derive a qualitative radial-type H\"{o}lder error bound, we first consider the facially reduced problem with minimal face, $\Feas(\calV,\calF_{d^*+1})$, and compute an explicit modulus of a Lipschitz error bound for this problem in Section~\ref{subsec:eb_reduced}.
Then we compute the constants associated to a one-step facial residual function for $\calF_i$ and $\bm{Z}_i$ when one moves from a lower layer face $\calF_i$ to the higher layer face $\calF_{i+1}$ for every $i = 1,\dots,d^*$ in Section~\ref{subsec:1-FRF}.
Combining these results, we then obtain the final qualitative radial-type H\"{o}lder error bound for the problem $\Feas(\calV,\calS_+^n)$ in Section~\ref{subsec:eb_originalSDP}}.

\subsection{Radial modulus function for facially reduced problem with minimal face}\label{subsec:eb_reduced}
In this subsection, we consider the problem $\Feas(\calV,\calF_{d^*+1})$, the feasibility problem with respect to the subspace $\calV$ and the minimal face $\calF_{d^*+1}$.
Our main aim in this subsection is to establish an explicit form of a radial modulus function $\kappa$ (see Theorem~\ref{thm:kappaB} later) such that 
\begin{equation}
\dist(\bm{X},\calV \cap \calF_{d^*+1}) \le \kappa(\norm{\bm{X}}_{\rm F})\max\{\dist(\bm{X},\calV),\dist(\bm{X},\calF_{d^*+1})\}  \text{ for all $\bm{X} \in \calS^n$}. \label{eq:eb_reduced}
\end{equation}

Recall that the problem $\Feas(\calV,\calF_{d^*+1})$ satisfies the PPS condition, so $\calV \cap \ri\calF_{d^*+1} \neq \emptyset$ holds or $\calF_{d^*+1}$ is polyhedral.
First, we consider the case where $\calV \cap \ri\calF_{d^*+1} \neq \emptyset$.
Direct verification shows that the following equations hold (this can also be seen by the proof of \cite[Lemma~3.2]{BT2003}):
\begin{align}
(\setspan\calF_{d^*+1}) \cap (\calF_{d^*+1} + \calF_{d^*+1}^\perp) &= \calF_{d^*+1}, \label{eq:Fl_span}\\
(\setspan\calF_{d^*+1}) \cap \setint(\calF_{d^*+1} + \calF_{d^*+1}^\perp) &= \ri\calF_{d^*+1}. \label{eq:riFl_span}
\end{align}

\begin{lemma}\label{lem:radius_in_Fl_plus_Flperp}
Let $\bm{U} \in \setint(\calF_{d^*+1} + \calF_{d^*+1}^\perp)$.
Then we have
\begin{equation}
\lambda_{\rm min}^+(P_{\calF_{d^*+1}}(\bm{U})) = \max\{\epsilon \mid \mathbb{B}(\bm{U},\epsilon) \subseteq \calF_{d^*+1} + \calF_{d^*+1}^\perp\} \in (0,+\infty]. \label{eq:lambda+}
\end{equation}
\end{lemma}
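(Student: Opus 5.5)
The plan is to pass to block coordinates adapted to the face and reduce both sides of \eqref{eq:lambda+} to the smallest eigenvalue of a single diagonal block. Write $\calF \coloneqq \calF_{d^*+1}$. By the facial structure recalled in the preliminaries, there are an integer $0\le d\le n$ and an orthogonal matrix $\bm{P}$ with $\calF = \bm{P}(\calS_+^d\oplus\{0\}^{n-d})\bm{P}^\top$. Since conjugation by $\bm{P}$ is an isometry of $\calS^n$ that preserves eigenvalues, I may assume $\bm{P}=\bm{I}_n$. I then write every $\bm{X}\in\calS^n$ in the block form $\bm{X}=\begin{pmatrix}\bm{A}&\bm{B}\\ \bm{B}^\top&\bm{C}\end{pmatrix}$ with $\bm{A}\in\calS^d$.

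In these coordinates the relevant sets are as follows:
\begin{itemize}
\item $\setspan\calF=\{\bm{B}=\bm{O},\ \bm{C}=\bm{O}\}$;
\item $\calF^\perp=\{\bm{A}=\bm{O}\}$, the orthogonal complement of $\setspan\calF$;
\item $\calF+\calF^\perp=\{\bm{X}\mid \bm{A}\succeq \bm{O}\}$;
\item $\setint(\calF+\calF^\perp)=\{\bm{X}\mid\bm{A}\succ\bm{O}\}$, because $\bm{X}\mapsto\bm{A}$ is a linear surjection and $\setint\calS_+^d$ consists of the positive definite matrices.
\end{itemize}
This is consistent with \eqref{eq:Fl_span} and \eqref{eq:riFl_span}.

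First I dispose of the degenerate case $d=0$. Then $\calF=\{\bm{O}\}$ and $\calF+\calF^\perp=\calS^n$, so every ball is contained in it and the maximum in \eqref{eq:lambda+} is $+\infty$. Also $P_\calF(\bm{U})=\bm{O}$ has no positive eigenvalue, so $\lambda_{\rm min}^+=+\infty$ by the convention, and both sides agree.

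For $d\ge1$ I would next compute the projection. Let $\bm{A}_U\succ\bm{O}$ be the upper-left block of $\bm{U}$. For $\bm{Y}=\Diag(\bm{M},\bm{O})\in\calF$ we have $\norm{\bm{U}-\bm{Y}}_{\rm F}^2=\norm{\bm{A}_U-\bm{M}}_{\rm F}^2+(\text{terms independent of }\bm{M})$. Hence $P_\calF(\bm{U})=\Diag(P_{\calS_+^d}(\bm{A}_U),\bm{O})=\Diag(\bm{A}_U,\bm{O})$. Its positive eigenvalues are exactly the eigenvalues of $\bm{A}_U$, so $\lambda_{\rm min}^+(P_\calF(\bm{U}))=\lambda_{\rm min}(\bm{A}_U)\in(0,\infty)$.

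The core step is to show that $\mathbb{B}(\bm{U},\epsilon)\subseteq\calF+\calF^\perp$ holds if and only if $\epsilon\le\lambda_{\rm min}(\bm{A}_U)$.
\begin{itemize}
\item Suppose $\epsilon\le\lambda_{\rm min}(\bm{A}_U)$ and take any $\bm{H}$ with $\norm{\bm{H}}_{\rm F}\le\epsilon$, with upper-left block $\bm{H}_A$. Then $\norm{\bm{H}_A}_{\rm F}\le\norm{\bm{H}}_{\rm F}\le\epsilon$, so $\lambda_{\rm min}(\bm{A}_U+\bm{H}_A)\ge\lambda_{\rm min}(\bm{A}_U)-\norm{\bm{H}_A}_2\ge 0$ by Weyl's inequality. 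Thus $\bm{U}+\bm{H}\in\calF+\calF^\perp$.
\item Suppose $\epsilon>\lambda_{\rm min}(\bm{A}_U)$. Let $\bm{v}$ be a unit eigenvector of $\bm{A}_U$ for its smallest eigenvalue, and take $\bm{H}=-\epsilon\,\Diag(\bm{v}\bm{v}^\top,\bm{O})$, which has $\norm{\bm{H}}_{\rm F}=\epsilon$. The upper-left block of $\bm{U}+\bm{H}$ then has eigenvalue $\lambda_{\rm min}(\bm{A}_U)-\epsilon<0$, so $\bm{U}+\bm{H}\notin\calF+\calF^\perp$.
\end{itemize}
Consequently the set of admissible $\epsilon$ is the closed interval $[0,\lambda_{\rm min}(\bm{A}_U)]$. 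Its maximum is attained and equals $\lambda_{\rm min}^+(P_\calF(\bm{U}))>0$, which proves \eqref{eq:lambda+}.

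The only point needing care is the identification of $\calF^\perp$ as the full orthogonal complement of $\setspan\calF$, including the off-diagonal blocks. Once the sets are written correctly in block form, the rest follows from Weyl's inequality and the bound $\norm{\bm{H}_A}_{\rm F}\le\norm{\bm{H}}_{\rm F}$.
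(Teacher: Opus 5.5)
Your proposal is correct and follows essentially the same route as the paper: you handle $\calF_{d^*+1}=\{\bm{O}\}$ separately, pass to block coordinates adapted to the face, and identify $\lambda_{\rm min}^+(P_{\calF_{d^*+1}}(\bm{U}))$ with the smallest eigenvalue of the upper-left block. You then prove containment of the ball of that radius by an eigenvalue perturbation bound (the paper writes out the quadratic-form estimate directly, which is Weyl's inequality), and show failure for larger radii by subtracting $\epsilon$ times the rank-one projector onto a bottom eigenvector.
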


\begin{proof}
When $\calF_{d^* + 1} = \{\bm{O}\}$, both the left- and right-hand sides of \eqref{eq:lambda+} are $+\infty$, and the desired equality holds.
In what follows, we assume that $\calF_{d^* + 1} \neq \{\bm{O}\}$.
Since $\calF_{d^*+1}$ is a nonzero face of $\calS_+^n$, there exist an orthogonal matrix $\bm{P}$ of order $n$ and a positive integer $r$ such that
\begin{equation}
\calF_{d^*+1} = \bm{P}(\calS_+^r \oplus \{0\}^{n-r})\bm{P}^\top. \label{eq:face_ell}
\end{equation}
Then we see that
\begin{align}
\calF_{d^*+1}^\perp &= \bm{P}\left\{
\begin{pmatrix}
\bm{O} & \bm{A}_{[12]}\\
\bm{A}_{[12]}^\top & \bm{A}_{[22]}
\end{pmatrix} \relmiddle| \bm{A}_{[12]} \in \bbR^{r\times (n-r)},\ \bm{A}_{[22]} \in \calS^{n-r}\right\}\bm{P}^\top, \nonumber\\
\calF_{d^*+1} + \calF_{d^*+1}^\perp &= \bm{P}\left\{
\begin{pmatrix}
\bm{A}_{[11]} & \bm{A}_{[12]}\\
\bm{A}_{[12]}^\top & \bm{A}_{[22]}
\end{pmatrix} \relmiddle| \bm{A}_{[11]} \in \calS_+^r,\ \bm{A}_{[12]} \in \bbR^{r\times (n-r)},\ \bm{A}_{[22]} \in \calS^{n-r}\right\}\bm{P}^\top. \label{eq:F+Fperp}
\end{align}
By $\bm{U} \in \setint(\calF_{d^*+1} + \calF_{d^*+1}^\perp)$, there exist $\bm{A}_{[11]} \in \setint\calS_+^r$, $\bm{A}_{[12]} \in \bbR^{r\times (n-r)}$, and $\bm{A}_{[22]} \in \calS^{n-r}$ such that
\begin{equation*}
\bm{U} = \bm{P}\begin{pmatrix}
\bm{A}_{[11]} & \bm{A}_{[12]}\\
\bm{A}_{[12]}^\top & \bm{A}_{[22]}
\end{pmatrix}\bm{P}^\top.
\end{equation*}
Then we have $P_{\calF_{d^*+1}}(\bm{U}) = \bm{P}\Diag(\bm{A}_{[11]},\bm{O})\bm{P}^\top$ and $\lambda_{\rm min}^+(P_{\calF_{d^*+1}}(\bm{U})) = \lambda_{\rm min}(\bm{A}_{[11]}) \in (0,+\infty)$.

First, we show the inclusion $\bbB(\bm{U}, \lambda_{\rm min}(\bm{A}_{[11]})) \subseteq \calF_{d^*+1} + \calF_{d^*+1}^\perp$.
Suppose that $\bm{X} \in \calS^n$ satisfies $\|\bm{U} - \bm{X}\|_{\rm F} \le \lambda_{\rm min}(\bm{A}_{[11]})$.
We divide $\bm{P}^\top\bm{X}\bm{P}$ into
\begin{equation*}
\bm{P}^\top\bm{X}\bm{P} = \begin{pmatrix}
\bm{Z}_{[11]} & \bm{Z}_{[12]}\\
\bm{Z}_{[12]}^\top & \bm{Z}_{[22]}
\end{pmatrix},
\end{equation*}
where $\bm{Z}_{[11]} \in \calS^r$, $\bm{Z}_{[12]} \in \bbR^{r\times (n-r)}$, and $\bm{Z}_{[22]} \in \calS^{n-r}$.
Then, since
\begin{align*}
\lVert\bm{A}_{[11]} - \bm{Z}_{[11]}\rVert_{\rm F} & \le \left\lVert \begin{pmatrix}
\bm{A}_{[11]} & \bm{A}_{[12]}\\
\bm{A}_{[12]}^\top & \bm{A}_{[22]}
\end{pmatrix} - \begin{pmatrix}
\bm{Z}_{[11]} & \bm{Z}_{[12]}\\
\bm{Z}_{[12]}^\top & \bm{Z}_{[22]}
\end{pmatrix}\right\rVert_{\rm F} 
= \lVert\bm{U}-\bm{X}\rVert_{\rm F}
\le \lambda_{\rm min}(\bm{A}_{[11]}),
\end{align*}
we have $\lvert\lambda_{\rm max}(\bm{A}_{[11]} - \bm{Z}_{[11]})\rvert \le \lambda_{\rm min}(\bm{A}_{[11]})$.
For any $\bm{x} \in \bbR^r$ with $\lVert\bm{x}\rVert_2 = 1$, we see that
\begin{equation*}
\bm{x}^\top\bm{Z}_{[11]}\bm{x} = \bm{x}^\top\bm{A}_{[11]}\bm{x} - \bm{x}^\top(\bm{A}_{[11]} - \bm{Z}_{[11]})\bm{x} \ge \lambda_{\rm min}(\bm{A}_{[11]}) - \lambda_{\rm max}(\bm{A}_{[11]} - \bm{Z}_{[11]}) \ge 0,
\end{equation*}
from which we obtain $\bm{Z}_{[11]} \in \calS_+^r$.
Therefore, from \eqref{eq:F+Fperp}, we have $\bm{X} \in \calF_{d^*+1} + \calF_{d^*+1}^\perp$.

Next, we show the inclusion $\bbB(\bm{U},\epsilon) \subseteq \calF_{d^*+1} + \calF_{d^*+1}^\perp$ does not hold for any $\epsilon$ such that $\epsilon > \lambda_{\rm min}(\bm{A}_{[11]})$.
Let $\bm{Q}\Diag(\lambda_1,\dots,\lambda_r)\bm{Q}^\top$ be an eigendecomposition of $\bm{A}_{[11]}$, where $\bm{Q}$ is an orthogonal matrix of order $r$ and $\lambda_1,\dots,\lambda_r$ are the eigenvalues of $\bm{A}_{[11]}$ satisfying $\lambda_1 \ge \dots \ge \lambda_r = \lambda_{\rm min}(\bm{A}_{[11]})$.
We define
\begin{align*}
\bm{R} &\coloneqq \bm{P} \Diag(\bm{Q}\Diag(0,\dots,0,\epsilon)\bm{Q}^\top,\bm{O})\bm{P}^\top,\\
\bm{X} &\coloneqq \bm{U} - \bm{R} = \bm{P} \begin{pmatrix}
\bm{Q}\Diag(\lambda_1,\dots,\lambda_{r-1},\lambda_r-\epsilon)\bm{Q}^\top & \bm{A}_{[12]} \\
\bm{A}_{[12]}^\top & \bm{A}_{[22]}
\end{pmatrix}\bm{P}^\top.
\end{align*}
On the one hand, we have $\|\bm{U}-\bm{X}\|_{\rm F} = \|\bm{R}\|_{\rm F} = \epsilon$, which implies that $\bm{X}\in \bbB(\bm{U},\epsilon)$.
On the other hand, since $\lambda_r - \epsilon < 0$, the matrix $\bm{Q}\Diag(\lambda_1,\dots,\lambda_{r-1},\lambda_r-\epsilon)\bm{Q}^\top$ is not positive semidefinite and $\bm{X}$ does not belong to $\calF_{d^*+1} + \calF_{d^*+1}^\perp$.
\end{proof}

In the next lemma, we show that the distance from a given matrix $\bm{X}$ to $\calV \cap \calF_{d^*+1}$ can be bounded by the maximum distances to the two sets $\calV \cap \setspan\calF_{d^*+1}$ and $\calF_{d^*+1} + \calF_{d^*+1}^\perp$.

\begin{lemma}\label{lem:dist_X_VcapFl}
Suppose that $\calV \cap \ri\calF_{d^*+1} \neq \emptyset$.
For every $\bm{X} \in \calS^n$, we have
\begin{equation}
\dist(\bm{X},\calV \cap \calF_{d^*+1}) \le \left(1 + 2\inf_{\bm{U}\in \calV \cap \ri\calF_{d^*+1}}\frac{\lVert\bm{U} - \bm{X}\rVert_{\rm F}}{\lambda_{\rm min}^+(\bm{U})}\right)\max\{\dist(\bm{X},\calV \cap \setspan\calF_{d^*+1}),\dist(\bm{X},\calF_{d^*+1} + \calF_{d^*+1}^\perp)\}. \label{eq:eb_V_cap_F}
\end{equation}
\end{lemma}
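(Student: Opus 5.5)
Write $\calF \coloneqq \calF_{d^*+1}$, $\epsilon \coloneqq \max\{\dist(\bm{X},\calV \cap \setspan\calF),\dist(\bm{X},\calF + \calF^\perp)\}$, and fix an arbitrary $\bm{U} \in \calV \cap \ri\calF$. It suffices to show
\[
\dist(\bm{X},\calV\cap\calF) \le \Bigl(1 + \tfrac{2\|\bm{U}-\bm{X}\|_{\rm F}}{\lambda_{\rm min}^+(\bm{U})}\Bigr)\epsilon ,
\]
and then take the infimum over $\bm{U}$. The approach is the standard Slater-point interpolation argument, as in \cite[Lemma~3.2]{BT2003}. First pull $\bm{X}$ onto the affine set $\calV \cap \setspan\calF$. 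Then form a convex combination with the relatively interior point $\bm{U}$, shifting far enough toward $\bm{U}$ that the combination lands in $\calF + \calF^\perp$. Finally, \eqref{eq:Fl_span} places that combination in $\calV \cap \calF$.

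\textbf{Step 1 (projection onto the affine part).} Let $\bm{Y} \coloneqq P_{\calV \cap \setspan\calF}(\bm{X})$, so $\|\bm{X}-\bm{Y}\|_{\rm F} \le \epsilon$. Since $\calF+\calF^\perp$ is a convex set and distance to it is $1$-Lipschitz, $\dist(\bm{Y},\calF+\calF^\perp) \le 2\epsilon$.

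\textbf{Step 2 (interior ball).} Because $\bm{U} \in \ri\calF \subseteq \setspan\calF$, we have $P_{\calF}(\bm{U}) = \bm{U}$. By \eqref{eq:riFl_span}, $\bm{U} \in \setint(\calF+\calF^\perp)$. Lemma~\ref{lem:radius_in_Fl_plus_Flperp} then gives $\bbB(\bm{U},\delta) \subseteq \calF+\calF^\perp$ with $\delta \coloneqq \lambda_{\rm min}^+(\bm{U}) \in (0,+\infty]$.

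\textbf{Step 3 (interpolation).} Let $\bm{W} \coloneqq P_{\calF+\calF^\perp}(\bm{Y})$, and set $\bm{V} \coloneqq (1-t)\bm{Y} + t\bm{U}$ with
\[
t \coloneqq \frac{2\epsilon}{2\epsilon+\delta} \in [0,1].
\]
(If $\delta=+\infty$, take $t=0$.) Write
\[
\bm{V} = (1-t)\bm{W} + t\Bigl(\bm{U} + \tfrac{1-t}{t}(\bm{Y}-\bm{W})\Bigr).
\]
Since $\tfrac{1-t}{t}\|\bm{Y}-\bm{W}\|_{\rm F} \le \tfrac{\delta}{2\epsilon}\cdot 2\epsilon = \delta$, the second term lies in $\bbB(\bm{U},\delta) \subseteq \calF+\calF^\perp$. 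Convexity of $\calF+\calF^\perp$ then gives $\bm{V} \in \calF+\calF^\perp$. We also have $\bm{V} \in \calV \cap \setspan\calF$, since $\bm{Y}$ and $\bm{U}$ both lie in this affine set. By \eqref{eq:Fl_span}, $\bm{V} \in \calV \cap \calF$.

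\textbf{Step 4 (estimate).} We bound
\[
\|\bm{X}-\bm{V}\|_{\rm F} \le \|\bm{X}-\bm{Y}\|_{\rm F} + t\|\bm{U}-\bm{Y}\|_{\rm F} \le \epsilon + \tfrac{2\epsilon}{\delta}\|\bm{U}-\bm{Y}\|_{\rm F},
\]
using $t \le 2\epsilon/\delta$. Expanding $\|\bm{U}-\bm{Y}\|_{\rm F} \le \|\bm{U}-\bm{X}\|_{\rm F} + \epsilon$ would add a term of order $\epsilon^2/\delta$, which is not in the target. So instead I bound $t\|\bm{U}-\bm{Y}\|_{\rm F}$ directly by $\tfrac{2\epsilon}{\delta}\|\bm{U}-\bm{X}\|_{\rm F}$. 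For this, note that $\bm{Y}$ is the projection of $\bm{X}$ onto an affine set containing $\bm{U}$. By the Pythagorean identity, $\|\bm{U}-\bm{Y}\|_{\rm F} \le \|\bm{U}-\bm{X}\|_{\rm F}$, and the desired bound follows.

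\textbf{Expected obstacle.} Choosing the right point for this Pythagorean step is the delicate part: projecting onto $\calV \cap \setspan\calF$ rather than onto $\calV$ alone is exactly what makes the estimate work. The constant $2$ also needs care, and it comes from the $2\epsilon$ of Step 1.
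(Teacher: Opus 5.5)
Your proof is correct and follows the paper's argument essentially step for step. The paper's auxiliary point $\bm{U} + \frac{\lambda_{\rm min}^+(\bm{U})}{2\epsilon}(\bm{Y}-\bm{W})$ is exactly your $\bm{U} + \frac{1-t}{t}(\bm{Y}-\bm{W})$, its final point is your convex combination $(1-t)\bm{Y}+t\bm{U}$, and its closing estimate uses nonexpansiveness of $P_{\calV\cap\setspan\calF_{d^*+1}}$, which is your Pythagorean step. Two small points, both shared with or trivial relative to the paper: the case $\epsilon=0$ (where $(1-t)/t$ is undefined) must be handled separately, and it is immediate since then $\bm{X}\in\calV\cap\calF_{d^*+1}$ by \eqref{eq:Fl_span} and closedness of $\calF_{d^*+1}+\calF_{d^*+1}^\perp$; and in Step 2 the reason $P_{\calF_{d^*+1}}(\bm{U})=\bm{U}$ is that $\bm{U}\in\calF_{d^*+1}$, not merely $\bm{U}\in\setspan\calF_{d^*+1}$.
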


\begin{proof}
When $\calF_{d^*+1} = \{\bm{O}\}$, both the left- and right-hand sides of \eqref{eq:eb_V_cap_F} are $\norm{\bm{X}}_{\rm F}$, and the desired inequality holds.
In what follows, we assume that $\calF_{d^*+1} \neq \{\bm{O}\}$.
For convenience, we define
$\delta(\bm{X}) \coloneqq \max\{\dist(\bm{X},\calV\cap\setspan\calF_{d^*+1}),\dist(\bm{X},\calF_{d^*+1} + \calF_{d^*+1}^\perp)\}$.
Let $\bm{U} \in \calV \cap \ri\calF_{d^*+1}$ be arbitrary and let
\begin{equation}
\bm{V} \coloneqq \bm{U} + \frac{\lambda_{\rm min}^+(\bm{U})}{2\delta(\bm{X})}(P_{\calV \cap \setspan\calF_{d^*+1}}(\bm{X}) - P_{\calF_{d^*+1} + \calF_{d^*+1}^\perp}(P_{\calV \cap \setspan\calF_{d^*+1}}(\bm{X}))). \label{eq:def_V}
\end{equation}
We note that $\lambda_{\rm min}^+(\bm{U})$ is a real number since $\bm{U}$ is a nonzero positive semidefinite matrix.
Then $\norm{\bm{V} - \bm{U}}_{\rm F} = \frac{\lambda_{\rm min}^+(\bm{U})}{2\delta(\bm{X})} \dist(P_{\calV \cap \setspan\calF_{d^*+1}}(\bm{X}),\calF_{d^*+1} + \calF_{d^*+1}^\perp)$, and hence, the matrix $\bm{V}$ satisfies
\begin{align*}
\norm{\bm{V} - \bm{U}}_{\rm F}
&\le \frac{\lambda_{\rm min}^+(\bm{U})}{2\delta(\bm{X})} \norm{P_{\calV \cap \setspan\calF_{d^*+1}}(\bm{X}) - P_{\calF_{d^*+1} + \calF_{d^*+1}^\perp}(\bm{X})}_{\rm F}\\
&\le \frac{\lambda_{\rm min}^+(\bm{U})}{2\delta(\bm{X})}(\dist(\bm{X},\calV \cap \setspan\calF_{d^*+1}) + \dist(\bm{X},\calF_{d^*+1} + \calF_{d^*+1}^\perp)) \\
&\le \lambda_{\rm min}^+(\bm{U})\\
&= \lambda_{\rm min}^+(P_{\calF_{d^*+1}}(\bm{U})),
\end{align*}
i.e., $\bm{V} \in \bbB(\bm{U},\lambda_{\rm min}^+(P_{\calF_{d^*+1}}(\bm{U})))$, where the first inequality follows from $P_{\calF_{d^*+1} + \calF_{d^*+1}^\perp}(\bm{X}) \in \calF_{d^*+1} + \calF_{d^*+1}^\perp$, the second inequality follows from the triangle inequality, the third inequality follows from the definition of $\delta(\bm{X})$, and the last equality follows from $\bm{U} \in \calF_{d^*+1}$.
In addition, it follows from \eqref{eq:riFl_span} that $\bm{U} \in \setint(\calF_{d^*+1} + \calF_{d^*+1}^\perp)$.
Therefore, from Lemma~\ref{lem:radius_in_Fl_plus_Flperp}, we see that $\bm{V} \in \calF_{d^*+1} + \calF_{d^*+1}^\perp$.

We let
\begin{equation*}
\bm{Y} \coloneqq \frac{2\delta(\bm{X})}{\lambda_{\rm min}^+(\bm{U}) + 2\delta(\bm{X})}\bm{V} + \frac{\lambda_{\rm min}^+(\bm{U})}{\lambda_{\rm min}^+(\bm{U}) + 2\delta(\bm{X})}P_{\calF_{d^*+1} + \calF_{d^*+1}^\perp}(P_{\calV \cap \setspan\calF_{d^*+1}}(\bm{X})).
\end{equation*}
The matrix $\bm{Y}$ belongs to $\calF_{d^*+1} + \calF_{d^*+1}^\perp$ since it is a convex combination of the two elements in $\calF_{d^*+1} + \calF_{d^*+1}^\perp$.
In addition, it also belongs to $\calV \cap \setspan\calF_{d^*+1}$ since by substituting $\bm{V}$ defined in \eqref{eq:def_V} into the definition of $\bm{Y}$, the matrix $\bm{Y}$ can be also represented as
\begin{equation}
\bm{Y} = \frac{2\delta(\bm{X})}{\lambda_{\rm min}^+(\bm{U}) + 2\delta(\bm{X})}\bm{U} + \frac{\lambda_{\rm min}^+(\bm{U})}{\lambda_{\rm min}^+(\bm{U}) + 2\delta(\bm{X})}P_{\calV \cap \setspan\calF_{d^*+1}}(\bm{X}), \label{eq:Y_conv_U_ProjX}
\end{equation}
which is a linear combination of the two elements in $\calV \cap \setspan\calF_{d^*+1}$.
Then it follows from \eqref{eq:Fl_span} that $\bm{Y} \in \calV \cap \calF_{d^*+1}$, and so we have
\begin{align}
\dist(\bm{X},\calV \cap \calF_{d^*+1}) &\le \norm{\bm{X} - \bm{Y}}_{\rm F} \nonumber\\
&\le \dist(\bm{X},\calV \cap \setspan\calF_{d^*+1}) + \norm{\bm{Y} - P_{\calV \cap \setspan\calF_{d^*+1}}(\bm{X})}_{\rm F}\nonumber\\
&\le \delta(\bm{X}) + \frac{2\delta(\bm{X})}{\lambda_{\rm min}^+(\bm{U})}\norm{
\bm{U} - P_{\calV \cap \setspan\calF_{d^*+1}}(\bm{X})}_{\rm F}\nonumber\\
&\le \left(1 + \frac{2\lVert\bm{U} - \bm{X}\rVert_{\rm F}}{\lambda_{\rm min}^+(\bm{U})}\right)\delta(\bm{X}), \label{eq:dist_X_VcapFl_ub_noinf}
\end{align}
where the second inequality follows from the triangle inequality, the third inequality follows from the definition of $\delta(\bm{X})$ and \eqref{eq:Y_conv_U_ProjX}, and the fourth inequality follows from $\bm{U} \in \calV \cap \setspan\calF_{d^*+1}$ and the nonexpansiveness of the projection.
Since $\bm{U} \in \calV \cap \ri\calF_{d^*+1}$ is arbitrary, by taking the infimum over $\bm{U} \in \calV \cap \ri\calF_{d^*+1}$ in \eqref{eq:dist_X_VcapFl_ub_noinf}, we obtain the desired result.
\end{proof}

The proof of Lemma~\ref{lem:dist_X_VcapFl} follows the general strategy of \cite[Lemma~3.1]{BT2003}, which addresses convex feasibility problems. In \cite{BT2003}, one fixes a matrix $\bm{U} \in \setint(\calF_{d^*+1} + \calF_{d^*+1}^\perp)$ and chooses $\epsilon > 0$ such that $\mathbb{B}(\bm{U},\epsilon) \subseteq \calF_{d^*+1} + \calF_{d^*+1}^\perp$, leading to an error bound that depends on this particular choice.
In our setting, however, Lemma~\ref{lem:radius_in_Fl_plus_Flperp} allows us to compute the maximal admissible $\epsilon$ explicitly. Moreover, by taking the infimum over all $\bm{U} \in \setint(\calF_{d^*+1} + \calF_{d^*+1}^\perp)$, we obtain the following sharper estimate.

\begin{proposition}\label{prop:eb_reduced}
Suppose that $\calV \cap \ri\calF_{d^*+1} \neq \emptyset$.
For each nonnegative $\rho$, we let
\begin{equation}
\kappa(\rho) \coloneqq \max\{\eta(\calV,\setspan\calF_{d^*+1}),1\}\theta(\rho), \label{eq:def_kappa_Slater}
\end{equation}
where $\eta$ is given as in Lemma~\ref{lem:Hoffman} and $\theta\colon\mathbb{R}_+ \to [1,+\infty)$ is given by 
\begin{equation}
\theta(\rho) \coloneqq 1 + 2\sup_{\bm{Y} \in \bbB(\bm{O},\rho)}\inf_{\bm{U}\in \calV \cap \ri\calF_{d^*+1}}\frac{\lVert\bm{U} -  \bm{Y}\rVert_{\rm F}}{\lambda_{\rm min}^+(\bm{U})}. \label{eq:thetaB}
\end{equation}
Then we have
\begin{equation*}
\dist(\bm{X},\calV\cap\calF_{d^*+1}) \le \kappa(\|\bm{X}\|_{\rm F})\max\{\dist(\bm{X},\calV),\dist(\bm{X},\calF_{d^*+1})\} \text{ for all $\bm{X} \in \calS^n$}.
\end{equation*}
\end{proposition}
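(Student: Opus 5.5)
The plan is to combine Lemma~\ref{lem:dist_X_VcapFl} with two elementary reductions: one controls the distance to $\calV \cap \setspan\calF_{d^*+1}$, and the other controls the distance to $\calF_{d^*+1} + \calF_{d^*+1}^\perp$. Fix $\bm{X} \in \calS^n$ and set $\rho \coloneqq \norm{\bm{X}}_{\rm F}$.

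\emph{Step 1: the coefficient.} Since $\bm{X} \in \bbB(\bm{O},\rho)$, the supremum in \eqref{eq:thetaB} dominates the infimum evaluated at $\bm{Y} = \bm{X}$. Hence the coefficient in \eqref{eq:eb_V_cap_F} is at most $\theta(\rho)$, and Lemma~\ref{lem:dist_X_VcapFl} gives
\[
\dist(\bm{X},\calV\cap\calF_{d^*+1}) \le \theta(\rho)\max\{\dist(\bm{X},\calV\cap\setspan\calF_{d^*+1}),\dist(\bm{X},\calF_{d^*+1}+\calF_{d^*+1}^\perp)\}.
\]

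\emph{Step 2: bounding the two distances in the maximum.} The set $\calF_{d^*+1}+\calF_{d^*+1}^\perp$ contains $\calF_{d^*+1}$, so
\[
\dist(\bm{X},\calF_{d^*+1}+\calF_{d^*+1}^\perp) \le \dist(\bm{X},\calF_{d^*+1}).
\]
For the first term, both $\calV$ and $\setspan\calF_{d^*+1}$ are polyhedral sets, since they are affine subspaces. Their intersection is nonempty, because it contains $\calV\cap\ri\calF_{d^*+1}$. By the definition of $\eta$ in \eqref{eq:EB_polysets},
\[
\dist(\bm{X},\calV\cap\setspan\calF_{d^*+1}) \le \eta(\calV,\setspan\calF_{d^*+1})\max\{\dist(\bm{X},\calV),\dist(\bm{X},\setspan\calF_{d^*+1})\}.
\]
Since $\calF_{d^*+1}\subseteq\setspan\calF_{d^*+1}$, we also have $\dist(\bm{X},\setspan\calF_{d^*+1}) \le \dist(\bm{X},\calF_{d^*+1})$.

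\emph{Step 3: combining.} Put $m(\bm{X}) \coloneqq \max\{\dist(\bm{X},\calV),\dist(\bm{X},\calF_{d^*+1})\}$. Step 2 bounds the first entry of the maximum by $\eta(\calV,\setspan\calF_{d^*+1})\, m(\bm{X})$ and the second by $m(\bm{X})$. The maximum is therefore at most $\max\{\eta(\calV,\setspan\calF_{d^*+1}),1\}\, m(\bm{X})$. Multiplying by $\theta(\rho)$ from Step 1 yields exactly $\kappa(\norm{\bm{X}}_{\rm F})\, m(\bm{X})$, which is the claimed bound.

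The only subtle point is the factor $\max\{\eta,1\}$, which is needed because $\eta$ may equal $0$ (Remark~\ref{rem:eta}), in which case the second distance must still be covered. I also need to treat the degenerate case $\calF_{d^*+1}=\{\bm{O}\}$. There $\ri\calF_{d^*+1}=\{\bm{O}\}$, and $\lambda_{\rm min}^+(\bm{O})=+\infty$ gives $\theta\equiv 1$. Lemma~\ref{lem:dist_X_VcapFl} still applies in this case, so no separate argument is needed. Beyond that, I expect no real obstacle; the argument is routine once Lemma~\ref{lem:dist_X_VcapFl} is available.
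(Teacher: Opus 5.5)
Your proposal is correct and follows the paper's proof essentially step for step. You apply Lemma~\ref{lem:dist_X_VcapFl}, bound its coefficient by $\theta(\norm{\bm{X}}_{\rm F})$, and control $\dist(\bm{X},\calV\cap\setspan\calF_{d^*+1})$ via $\eta$. You then absorb the remaining distances using $\calF_{d^*+1}\subseteq\setspan\calF_{d^*+1}$ and $\calF_{d^*+1}\subseteq\calF_{d^*+1}+\calF_{d^*+1}^\perp$, and your treatment of the case $\calF_{d^*+1}=\{\bm{O}\}$ is consistent with the paper's.
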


\begin{proof}
Using Lemma~\ref{lem:dist_X_VcapFl}, we have
\begin{align}
\dist(\bm{X},\calV \cap \calF_{d^*+1}) 
&\le  \left(1 + 2\inf_{\bm{U}\in \calV \cap \ri\calF_{d^*+1}}\frac{\lVert\bm{U} - \bm{X}\rVert_{\rm F}}{\lambda_{\rm min}^+(\bm{U})}\right)\max\{\dist(\bm{X},\calV \cap \setspan\calF_{d^*+1}),\dist(\bm{X},\calF_{d^*+1} + \calF_{d^*+1}^\perp)\} \nonumber \\
&\le \theta(\norm{\bm{X}}_{\rm F})\max\{\dist(\bm{X},\calV \cap \setspan\calF_{d^*+1}),\dist(\bm{X},\calF_{d^*+1} + \calF_{d^*+1}^\perp)\}. \label{eq:eb_thetaB}
\end{align}
In addition, since $\calV$ and $\setspan\calF_{d^*+1}$ are polyhedral, it follows from the definition of $\eta(\calV,\setspan\calF_{d^*+1})$ that
\begin{equation}
\dist(\bm{X},\calV \cap \setspan\calF_{d^*+1}) \le \eta(\calV,\setspan\calF_{d^*+1})\max\{\dist(\bm{X},\calV),\dist(\bm{X},\setspan\calF_{d^*+1})\}.\label{eq:eb_V_spanFl}
\end{equation}
From \eqref{eq:eb_thetaB} and \eqref{eq:eb_V_spanFl}, we obtain
\begin{align*}
&\dist(\bm{X},\calV \cap \calF_{d^*+1})\\
\le{}& \max\{\eta(\calV,\setspan\calF_{d^*+1}),1\}\theta(\norm{\bm{X}}_{\rm F})\max\{\dist(\bm{X},\calV),\dist(\bm{X},\setspan\calF_{d^*+1}),\dist(\bm{X},\calF_{d^*+1} + \calF_{d^*+1}^\perp)\}\\
\le{}& \max\{\eta(\calV,\setspan\calF_{d^*+1}),1\}\theta(\norm{\bm{X}}_{\rm F})\max\{\dist(\bm{X},\calV),\dist(\bm{X},\calF_{d^*+1})\} \\
={}& \kappa(\norm{\bm{X}}_{\rm F})\max\{\dist(\bm{X},\calV),\dist(\bm{X},\calF_{d^*+1})\},
\end{align*}
where the second inequality holds since $\calF_{d^*+1}$ is included in both $\setspan\calF_{d^*+1}$ and $\calF_{d^*+1} + \calF_{d^*+1}^\perp$, and the equality follows from the definition of the function $\kappa$.
Therefore, we obtain the desired result.
\end{proof}

In general, it may be difficult to directly compute the exact value of $\theta(\rho)$. 
However, an upper bound can be obtained through the triangle inequality as follows: 
\begin{equation}
\theta(\rho) \le 1 + 2\inf_{\bm{U}\in \calV \cap \ri\calF_{d^*+1}}\frac{\norm{\bm{U}}_{\rm F} + \rho}{\lambda_{\rm min}^+(\bm{U})}. \label{eq:theta_ub}
\end{equation}
In addition, by taking $\bm{U}_0 \in \calV \cap \calF_{d^*+1}$, it follows from \eqref{eq:theta_ub} that
\begin{equation}
\theta(\rho) \le \overline{\theta}(\rho;\bm{U}_0) \coloneqq 1 + 2\frac{\norm{\bm{U}_0}_{\rm F} + \rho}{\lambda_{\rm min}^+(\bm{U}_0)}. \label{eq:bar_theta}
\end{equation}

We now discuss situations where the radial modulus function $\kappa$ can be bounded uniformly by a constant, which leads to global error bounds.
Recall that the function $\kappa$ defined in \eqref{eq:def_kappa_Slater} is composed of the constant $\eta(\calV,\setspan\calF_{d^*+1})$ and the function $\theta$.
Firstly, we note that the constant  $\eta(\calV,\setspan\calF_{d^*+1})$ can be effectively {upper bounded} by computing the smallest positive singular value of a linear mapping determined by the two affine spaces $\calV$ and $\setspan\calF_{d^*+1}$, as shown in \eqref{enum:eta_equality} of Lemma~\ref{lem:Hoffman}. 
By \cite[Corollary~\mbox{11}]{BBL1999}, we can also bound it by a constant using the angle between these two affine spaces.
Secondly, we discuss how $\theta(\rho)$ can be bounded by a more concise quantity.
As shown in the following corollary, the upper bound in \eqref{eq:theta_ub} can be bounded by an explicit quantity that does not depend on $\rho$ when $\mathcal{V}$ is a subspace, so that Proposition~\ref{prop:eb_reduced} implies a global Lipschitz error bound for $\Feas(\calV,\calF_{d^*+1})$.
We define the \emph{condition number} of the set 
$\mathcal{V} \cap \ri\calF_{d^*+1}$ by
\begin{equation*}
\cond(\mathcal{V} \cap \ri\mathcal{F}_{d^*+1}) \coloneqq \inf\left\{ \frac{\lambda_{\max}(\bm{X})}{\lambda_{\rm min}^+(\bm{X})} \relmiddle|  \bm{X} \in \mathcal{V} \cap \ri\calF_{d^*+1}\right\}.
\end{equation*}
Note that various versions of the condition number for a general conic system have been introduced and studied in the literature, see for example \cite{BF2009,PR2020, Renegar1994}.
The one we used here is a variant of those in the literature, restricted to the setting of a semidefinite system, and expressed explicitly in terms of the extreme eigenvalues of the matrices.

\begin{corollary}
Let $\calV$ be a subspace, and let the matrix $\bm{P}$ and the face $\calF_{d^*+1}$ be as in \eqref{eq:face_ell}, in which case $\calF_{d^*+1}$ is linearly isomorphic to $\calS_+^r$.
Suppose that $\calV \cap \ri\calF_{d^*+1} \neq \emptyset$. 
Then, for all $\rho \in \bbR_+$, 
\begin{equation*}
\theta(\rho) \le \theta_0 \coloneqq 1 + 2\sqrt{r} \cond(\mathcal{V} \cap \ri\calF_{d^*+1}).
\end{equation*}
Moreover, if there exists $\alpha > 0$ such that $\bm{P}\Diag(\alpha\bm{I}_r,\bm{O})\bm{P}^\top \in \calV \cap \ri\calF_{d^*+1}$, 
then $\theta_0=1+2 \sqrt{r}$, and so, for every $\rho \in \bbR_+$, we have
\begin{equation}
\theta(\rho) \le 1 + 2\sqrt{r}. \label{eq:theta_tight_ub}
\end{equation}
\end{corollary}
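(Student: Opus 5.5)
The plan is to exploit that, when $\calV$ is a subspace, the set $\calV \cap \ri\calF_{d^*+1}$ is a cone: if $\bm{U}$ belongs to it, so does $t\bm{U}$ for every $t>0$. This lets us push the candidate point $\bm{U}$ in the infimum of \eqref{eq:thetaB} far out along its ray. Doing so makes the dependence on $\bm{Y}$, and hence on $\rho$, disappear.

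\emph{Step 1 (scaling).} Fix $\bm{Y} \in \bbB(\bm{O},\rho)$ and $\bm{U} \in \calV \cap \ri\calF_{d^*+1}$. Since $t\bm{U}\in \calV\cap\ri\calF_{d^*+1}$ for all $t>0$ and $\lambda_{\rm min}^+(t\bm{U}) = t\lambda_{\rm min}^+(\bm{U})$, we get
\begin{equation*}
\inf_{\bm{U}'\in \calV \cap \ri\calF_{d^*+1}}\frac{\norm{\bm{U}'-\bm{Y}}_{\rm F}}{\lambda_{\rm min}^+(\bm{U}')} \le \inf_{t>0}\frac{\norm{t\bm{U}-\bm{Y}}_{\rm F}}{t\lambda_{\rm min}^+(\bm{U})} \le \lim_{t\to\infty}\frac{\norm{\bm{U}-\bm{Y}/t}_{\rm F}}{\lambda_{\rm min}^+(\bm{U})} = \frac{\norm{\bm{U}}_{\rm F}}{\lambda_{\rm min}^+(\bm{U})}.
\end{equation*}
(Alternatively, the triangle bound $(t\norm{\bm{U}}_{\rm F}+\rho)/(t\lambda_{\rm min}^+(\bm{U}))$ applied to \eqref{eq:theta_ub} gives the same limit.) The right-hand side is independent of $\bm{Y}$, so the supremum over $\bm{Y}\in\bbB(\bm{O},\rho)$ is bounded by the same quantity. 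Here $\lambda_{\rm min}^+(\bm{U})<\infty$ because $r\ge 1$ whenever $\ri\calF_{d^*+1}$ contains a nonzero matrix; the case $\calF_{d^*+1}=\{\bm{O}\}$ is trivial with the convention $a/(+\infty)=0$.

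\emph{Step 2 (Frobenius norm versus eigenvalues).} By \eqref{eq:face_ell}, any $\bm{U}\in\ri\calF_{d^*+1}$ has the form $\bm{P}\Diag(\bm{A},\bm{O})\bm{P}^\top$ with $\bm{A}\in\setint\calS_+^r$. Hence $\bm{U}$ has exactly $r$ positive eigenvalues, each at most $\lambda_{\max}(\bm{U})$, so $\norm{\bm{U}}_{\rm F}\le\sqrt{r}\,\lambda_{\max}(\bm{U})$. Combining with Step 1 gives
\begin{equation*}
\theta(\rho)\le 1+2\sqrt{r}\,\frac{\lambda_{\max}(\bm{U})}{\lambda_{\rm min}^+(\bm{U})}.
\end{equation*}
Taking the infimum over $\bm{U}\in\calV\cap\ri\calF_{d^*+1}$ yields $\theta(\rho)\le\theta_0$.

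\emph{Step 3 (the special case).} If $\bm{P}\Diag(\alpha\bm{I}_r,\bm{O})\bm{P}^\top\in\calV\cap\ri\calF_{d^*+1}$, this matrix has $\lambda_{\max}=\lambda_{\rm min}^+=\alpha$, so $\cond\le 1$. Since $\lambda_{\max}(\bm{X})\ge\lambda_{\rm min}^+(\bm{X})$ always holds, $\cond\ge1$. Hence $\cond=1$, $\theta_0=1+2\sqrt{r}$, and \eqref{eq:theta_tight_ub} follows.

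The only delicate point is Step 1: the bound must be stated as an infimum over the ray rather than by fixing $t$. A fixed $t$ would leave a term $\rho/(t\lambda_{\rm min}^+)$, so the limit $t\to\infty$ is essential for uniformity in $\rho$. The remaining steps are routine eigenvalue bookkeeping.
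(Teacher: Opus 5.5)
Your proof is correct and follows the paper's argument. You use that $\calV\cap\ri\calF_{d^*+1}$ is a cone when $\calV$ is a subspace and send $t\to\infty$ along the ray $t\bm{U}$ to remove the $\rho$-dependence, then apply $\norm{\bm{U}}_{\rm F}\le\sqrt{r}\,\lambda_{\max}(\bm{U})$, and finally get $\cond=1$ from $\bm{P}\Diag(\alpha\bm{I}_r,\bm{O})\bm{P}^\top$. The only cosmetic difference is that the paper scales inside the triangle-inequality bound \eqref{eq:theta_ub}, establishing the equality \eqref{eq:frac_norm_lambda}, which is the alternative you mention; your primary route scales directly inside the definition \eqref{eq:thetaB}.
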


\begin{proof}
To begin with, we show that 
\begin{equation}
\inf_{\bm{U}\in \calV \cap \ri\calF_{d^*+1}}\frac{\norm{\bm{U}}_{\rm F} + \rho}{\lambda_{\rm min}^+(\bm{U})} = \inf_{\bm{U}\in \calV \cap \ri\calF_{d^*+1}}\frac{\norm{\bm{U}}_{\rm F}}{\lambda_{\rm min}^+(\bm{U})}. \label{eq:frac_norm_lambda}
\end{equation}
Since $\rho$ is nonnegative, we see that the right-hand side of \eqref{eq:frac_norm_lambda} is less than or equal to the left-hand side of \eqref{eq:frac_norm_lambda}.
To prove the equality, let $\bm{U}_0 \in \calV \cap \ri\calF_{d^*+1}$ be arbitrary.
The set $\calV \cap \ri\calF_{d^*+1}$ is a cone (not necessarily containing the origin) since $\calV$ is a subspace, so $t\bm{U}_0 \in \calV \cap \ri\calF_{d^*+1}$ holds for all $t > 0$.
Then we have for every $t > 0$
\begin{equation*}
\inf_{\bm{U}\in \calV \cap \ri\calF_{d^*+1}}\frac{\norm{\bm{U}}_{\rm F} + \rho}{\lambda_{\rm min}^+(\bm{U})} \le \frac{\norm{t\bm{U}_0}_{\rm F} + \rho}{\lambda_{\rm min}^+(t\bm{U}_0)} = \frac{\norm{\bm{U}_0}_{\rm F}}{\lambda_{\rm min}^+(\bm{U}_0)} + \frac{\rho}{t\lambda_{\rm min}^+(\bm{U}_0)}.
\end{equation*}
Since $t > 0$ is arbitrary, taking the limit $t \to \infty$, we have
\begin{equation*}
\inf_{\bm{U}\in \calV \cap \ri\calF_{d^*+1}}\frac{\norm{\bm{U}}_{\rm F} + \rho}{\lambda_{\rm min}^+(\bm{U})} \le \frac{\norm{\bm{U}_0}_{\rm F}}{\lambda_{\rm min}^+(\bm{U}_0)}.
\end{equation*}
Since $\bm{U}_0 \in \calV \cap \ri\calF_{d^*+1}$ is also arbitrary, we obtain
\begin{equation*}
\inf_{\bm{U}\in \calV \cap \ri\calF_{d^*+1}}\frac{\norm{\bm{U}}_{\rm F} + \rho}{\lambda_{\rm min}^+(\bm{U})} \le \inf_{\bm{U}\in \calV \cap \ri\calF_{d^*+1}}\frac{\norm{\bm{U}}_{\rm F}}{\lambda_{\rm min}^+(\bm{U})},
\end{equation*}
and the equality in \eqref{eq:frac_norm_lambda} holds.

For any $\bm{U}\in \calV \cap \ri\calF_{d^*+1}$, since $\norm{\bm{U}}_{\rm F}$ is the square root of the sum of squares of the nonzero eigenvalues of $\bm{U}$ and the number of its nonzero eigenvalues is $r$, it follows from the definition of $\cond(\mathcal{V} \cap \ri\calF_{d^*+1})$ that
\begin{equation}
\inf_{\bm{U}\in \calV \cap \ri\calF_{d^*+1}}\frac{\norm{\bm{U}}_{\rm F}}{\lambda_{\rm min}^+(\bm{U})} \le \sqrt{r} \cond(\mathcal{V} \cap \ri\calF_{d^*+1}). \label{eq:ub_by_cond}
\end{equation}
Therefore, we obtain
\begin{equation*}
\theta(\rho) \overset{\scriptsize \text{(a)}}\le 1 + 2\inf_{\bm{U}\in \calV \cap \ri\calF_{d^*+1}}\frac{\norm{\bm{U}}_{\rm F} + \rho}{\lambda_{\rm min}^+(\bm{U})} \overset{\scriptsize \text{(b)}}= 1 + 2\inf_{\bm{U}\in \calV \cap \ri\calF_{d^*+1}}\frac{\norm{\bm{U}}_{\rm F}}{\lambda_{\rm min}^+(\bm{U})} \overset{\scriptsize \text{(c)}}\le 1 + 2\sqrt{r} \cond(\mathcal{V} \cap \ri\calF_{d^*+1}),
\end{equation*}
where we use \eqref{eq:theta_ub} to derive (a), use \eqref{eq:frac_norm_lambda} to derive (b), and use \eqref{eq:ub_by_cond} to derive (c).

Finally, suppose, in addition,  that  there exists $\alpha>0$ such that $\bm{P}\Diag(\alpha\bm{I}_r,\bm{O})\bm{P}^\top \in \calV \cap \ri\calF_{d^*+1}$.
Then the condition number $\cond(\mathcal{V} \cap \ri\calF_{d^*+1})$ equals $1$, and so, the conclusion follows.  
\end{proof}

As we will see in Section~\ref{sec:tight}, the radial modulus function in the error bound for the problem $\Feas(\calV,\calS_+^n)$ cannot, in general, be uniformly bounded by a constant. Moreover, the asymptotic tightness of the derived error bound depends on the quantity $d^*$, which measures the distance to the PPS condition of the semidefinite feasibility problem.

In contrast, the following example demonstrates that one can construct simple instances for which the radial modulus function in the error bound for $\Feas(\calV,\calF_{d^*+1})$, defined in \eqref{eq:def_kappa_Slater}, is uniformly bounded by a dimension-free constant. In this case, the resulting estimate is asymptotically tight in the sense of Definition~\ref{def:tight_eb}. A more detailed investigation of tightness in a broader setting will be presented in Section~\ref{sec:tight}.

\begin{example}[\textbf{An instance where the error bound for the facially reduced problem is asymptotically tight}]
Let $n \in \bbZ_{\ge 3}$ and
\begin{align*}
\calK_n &\coloneqq \calS_+^2 \oplus \{0\}^{n-2},\\
\calV_n &\coloneqq \setspan\calK_n = \calS^2 \oplus \{0\}^{n-2}.
\end{align*}
Note that $\calK_n$ is a face of $\calS_+^n$ exposed by the matrix $\frac{1}{\sqrt{n-2}}\sum_{i=3}^n\bm{E}_{ii} \in \calS_+^n \cap \calV_n^\perp$ and
\begin{equation*}
\calV_n \cap \ri\calK_n = (\setint\calS_+^2) \oplus \{0\}^{n-2} \neq \emptyset.
\end{equation*}
Then Proposition~\ref{prop:eb_reduced} implies that
\begin{equation}
\dist(\bm{X},\calV_n\cap \calK_n) \le \kappa(\norm{\bm{X}}_{\rm F}) \max\{\dist(\bm{X},\calV_n),\dist(\bm{X},\calK_n)\} \text{ for all $\bm{X} \in \calS^n$}, \label{eq:eb_reduced_Slater}
\end{equation}
where $\kappa(\norm{\bm{X}}_{\rm F}) = \max\{\eta(\calV_n,\setspan\calK_n),1\}\theta(\norm{\bm{X}}_{\rm F})$.

To see the asymptotic tightness of the error bound in \eqref{eq:eb_reduced_Slater}, we take a sequence $(n_k) \subseteq \bbZ_{\ge 2}$ satisfying $\lim_{k\to \infty} n_k = \infty$ and a sequence $(\bm{X}_k)$ satisfying $\bm{X}_k \in \calS^{n_k}$ for all $k$ arbitrarily.
It follows from $\calV_{n_k} = \setspan\calK_{n_k}$ and Remark~\ref{rem:eta} that $\eta(\calV_{n_k},\setspan\calK_{n_k}) = 1$.
In addition, since $\calV_{n_k}$ is a subspace and $\Diag(\bm{I}_2,\bm{O}) \in \calV_{n_k} \cap \ri\calK_{n_k}$ holds, it follows from \eqref{eq:theta_tight_ub} that $\theta(\norm{\bm{X}_k}_{\rm F}) \le 1 + 2\sqrt{2}$, and so $\kappa(\norm{\bm{X}_k}_{\rm F}) \le 1 + 2\sqrt{2}$.
Therefore, we have
\begin{align*}
\frac{\dist(\bm{X}_k,\calV_{n_k} \cap \calK_{n_k})}{\kappa(\norm{\bm{X}_k}_{\rm F}) \max\{\dist(\bm{X}_k,\calV_{n_k}),\dist(\bm{X}_k,\calK_{n_k})\}} &\ge \frac{\dist(\bm{X}_k,\calK_{n_k})}{(1 + 2\sqrt{2})\dist(\bm{X}_k,\calK_{n_k})} = \frac{1}{1 + 2\sqrt{2}}.
\end{align*}
Thus, we see that the error bound in
\eqref{eq:eb_reduced_Slater} is asymptotically tight with a dimension-free constant $\frac{1}{1 + 2\sqrt{2}}$.
\end{example}

Considering the case where $\calF_{d^*+1}$ is polyhedral leads to the following theorem.

\begin{theorem}[\textbf{Radial modulus function for facially reduced problem with minimal face}]\label{thm:kappaB}
Let $\theta$ be defined as in \eqref{eq:thetaB} and let $\eta$ be given as in Lemma~\ref{lem:Hoffman}. 
Then
\begin{equation*}
\dist(\bm{X},\calV \cap \calF_{d^*+1}) \le \kappa(\norm{\bm{X}}_{\rm F})\max\{\dist(\bm{X},\calV),\dist(\bm{X},\calF_{d^*+1})\}  \text{ for all $\bm{X} \in \calS^n$},
\end{equation*}
where the function $\kappa(\rho)$ can be set to 
\begin{align*}
\kappa(\rho) = \begin{cases}
\max\{\eta(\calV,\setspan\calF_{d^*+1}),1\}\theta(\rho) & \text{(if $\calV \cap \ri\calF_{d^*+1} \neq \emptyset$)},\\
\eta(\calV,\calF_{d^*+1}) & \text{(if $\calF_{d^*+1}$ is polyhedral)}.
\end{cases}
\end{align*}
\end{theorem}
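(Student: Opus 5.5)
The statement splits according to which alternative of the PPS condition in \eqref{enum:PPS} of Theorem~\ref{thm:chain_faces} holds for $\Feas(\calV,\calF_{d^*+1})$, and I would handle the two cases separately.

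\emph{Case $\calV \cap \ri\calF_{d^*+1} \neq \emptyset$.} Here there is nothing new to prove. The claimed inequality, with $\kappa(\rho) = \max\{\eta(\calV,\setspan\calF_{d^*+1}),1\}\theta(\rho)$, is exactly Proposition~\ref{prop:eb_reduced}, which was obtained from Lemma~\ref{lem:dist_X_VcapFl} combined with the polyhedral bound \eqref{eq:eb_V_spanFl}. I would simply cite it. (If both alternatives hold, either formula for $\kappa$ is valid.)

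\emph{Case $\calF_{d^*+1}$ polyhedral.} I would apply the polyhedral error bound \eqref{eq:EB_polysets} with $C_1 = \calV$ and $C_2 = \calF_{d^*+1}$.
\begin{enumerate}[label=(\arabic*)]
\item $C_1=\calV$ is an affine space in the Euclidean space $\calS^n$, hence polyhedral.
\item $C_2=\calF_{d^*+1}$ is polyhedral by assumption.
\item $C_1 \cap C_2 \neq \emptyset$, since $\calV \cap \calF_{d^*+1} = \calV \cap \calS_+^n$ and the problem is assumed feasible.
\end{enumerate}
To place this in the setting of Lemma~\ref{lem:Hoffman}, which is stated in $\bbR^n$, I would identify $\calS^n$ isometrically with $\bbR^{n(n+1)/2}$ using the scaled half-vectorization, in which off-diagonal entries are multiplied by $\sqrt{2}$. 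This map preserves Frobenius distances, so the definition of $\eta(\calV,\calF_{d^*+1})$ carries over verbatim.

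By the definition of $\eta$ as the smallest constant in \eqref{eq:EB_polysets}, we then get
\[
\dist(\bm{X},\calV\cap\calF_{d^*+1}) \le \eta(\calV,\calF_{d^*+1})\max\{\dist(\bm{X},\calV),\dist(\bm{X},\calF_{d^*+1})\}
\]
for all $\bm{X} \in \calS^n$. The constant function $\kappa \equiv \eta(\calV,\calF_{d^*+1})$ is trivially nondecreasing, so it is an admissible radial modulus function.

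The only step that needs any care is existence and finiteness of $\eta(\calV,\calF_{d^*+1})$. This follows from the cited polyhedral result \cite[Corollary~5.26]{BB1996}. One could add that polyhedral faces of $\calS_+^n$ are exactly $\{\bm{O}\}$ and rays $\bbR_+\bm{q}\bm{q}^\top$, which makes polyhedrality concrete, but this is not needed for the argument. I therefore do not expect a real obstacle: the theorem is essentially a repackaging of Proposition~\ref{prop:eb_reduced} together with Hoffman-type bounds for polyhedra.
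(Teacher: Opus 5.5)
Your proposal is correct and matches the paper's proof. The case $\calV \cap \ri\calF_{d^*+1} \neq \emptyset$ is exactly Proposition~\ref{prop:eb_reduced}, and the polyhedral case follows directly from the definition of $\eta(\calV,\calF_{d^*+1})$ as the smallest constant in \eqref{eq:EB_polysets}; your extra remarks (the isometric identification of $\calS^n$ with $\bbR^{n(n+1)/2}$, nonemptiness of $\calV \cap \calF_{d^*+1}$, finiteness of $\eta$) only make explicit details the paper leaves implicit.
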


\begin{proof}
If $\calV \cap \ri\calF_{d^*+1} \neq \emptyset$ holds, then the statement holds by Proposition~\ref{prop:eb_reduced}.
If $\calF_{d^*+1}$ is polyhedral, it follows from the definition of $\eta(\calV,\calF_{d^*+1})$ that
\begin{equation*}
\dist(\bm{X},\calV \cap \calF_{d^*+1}) \le \eta(\calV,\calF_{d^*+1})\max\{\dist(\bm{X},\calV),\dist(\bm{X},\calF_{d^*+1})\} \text{ for all $\bm{X} \in \calS^n$}.
\end{equation*}
Therefore, \eqref{eq:eb_reduced} holds by setting $\kappa(\rho) \coloneqq \eta(\calV,\calF_{d^*+1})$ for any $\rho \in \bbR_+$.
\end{proof}

When $\calV \cap \ri\calF_{d^*+1} \neq \emptyset$, the upper bound for the function $\theta$ given in \eqref{eq:bar_theta} yields an upper bound for the function $\kappa$.
Indeed, by taking $\bm{U}_0 \in \calV \cap \calF_{d^*+1}$ and letting 
\begin{equation}
\overline{\kappa}(\rho;\bm{U}_0) \coloneqq \max\{\eta(\calV,\setspan\calF_{d^*+1}),1\}\overline{\theta}(\rho;\bm{U}_0), \label{eq:def_barkappa}
\end{equation}
we then obtain $\kappa(\rho) \le \overline{\kappa}(\rho;\bm{U}_0)$. 

In passing, we note that the constant $\eta(\calV,\calF_{d^*+1})$ can also be upper bounded by using a Hoffman constant (as shown in \eqref{enum:eta_inequality} of Lemma~\ref{lem:Hoffman}), and so, can be effectively estimated.
Also, the two situations $\calV \cap \ri\calF_{d^*+1} \neq \emptyset$ and $\calF_{d^*+1}$ being polyhedral can hold simultaneously; in that case, we can set 
\begin{equation}
\kappa(\rho) \coloneqq \min\{\max\{\eta(\calV,\setspan\calF_{d^*+1}),1\}\theta(\rho),\eta(\calV,\calF_{d^*+1})\}. \label{eq:kappa_min}
\end{equation}

\subsection{Constants for one-step facial residual functions}\label{subsec:1-FRF}
Recall that a one-step facial residual function for a face of the positive semidefinite cone can be written as \eqref{eq:1-FRF_PSD}.
This means that for $i = 1,\dots,d^*$, a one-step facial residual function $\psi_{\calF_i,\bm{Z}_i}(s,t)$ for $\calF_i$ and $\bm{Z}_i$ takes the form of
\begin{equation*}
\psi_{\calF_i,\bm{Z}_i}(s,t) = \alpha_i s + \beta_i \sqrt{st}
\end{equation*}
for some nonnegative $\alpha_i$  and $\beta_i$, where the $\calF_i$'s and $\bm{Z}_i$'s are as in \eqref{eq:chain}.
In this subsection, we compute the constants $\alpha_i$ and $\beta_i$ explicitly.
We also discuss how these constants relate to the choices of $\calF_i$ and $\bm{Z}_i$.

The following lemma provides a one-step facial residual function for the positive semidefinite cone $\calS_+^n$ associated with a block diagonal matrix having one positive definite block and all other blocks equal to zero.

\begin{lemma}\label{lem:1FRF_partialPD}
Let $r < n$ and $\bm{\Lambda} = \Diag(\lambda_1,\dots,\lambda_{n-r})$ be a diagonal matrix with positive diagonal elements $\lambda_1 \ge \cdots \ge \lambda_{n-r} > 0$.
Let $\bm{Z} \coloneqq \Diag(\bm{O},\bm{\Lambda}) \in \calS_+^n$.
Define
\begin{align*}
\gamma &\coloneqq \frac{\sqrt{\sum_{j=1}^{n-r} \lambda_j^2} + 1}{\lambda_{n-r}} + 1 = \frac{\norm{\bm{\Lambda}}_{\rm F} + 1}{\lambda_{\rm min}(\bm{\Lambda})} + 1,\\
\alpha &\coloneqq \sqrt{1 + 2(\gamma + 1)r + \gamma^2}, \\
\beta  &\coloneqq \sqrt{2(\gamma + 1)\sqrt{r}}.
\end{align*}
Then for any $\bm{X} \in \calS^n$ and $\epsilon \in \bbR_+$ such that $\dist(\bm{X},\calS_+^n) \le \epsilon$ and $\langle \bm{Z},\bm{X}\rangle \le \epsilon$, we have
\begin{equation}
\dist(\bm{X},\calS_+^n \cap \{\bm{Z}\}^\perp) \le \alpha\epsilon + \beta\sqrt{\epsilon\lVert\bm{X}\rVert_{\rm F}}. \label{eq:eb_positive_diagonal}
\end{equation}
In other words, for $\alpha$ and $\beta$ defined above, $\alpha s + \beta \sqrt{st}$ is a one-step facial residual function for $\calS_+^n$ and $\bm{Z}$.
\end{lemma}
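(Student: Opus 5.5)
The plan is to project $\bm{X}$ onto $\calS_+^n$ first, and then simply truncate the projected matrix to its leading $r\times r$ block. The relevant face is
\[
\calS_+^n\cap\{\bm{Z}\}^\perp=\calS_+^r\oplus\{0\}^{n-r},
\]
because for $\bm{Y}\succeq\bm{O}$ we have $\langle\bm{Z},\bm{Y}\rangle=\langle\bm{\Lambda},\bm{Y}_{[22]}\rangle=0$ if and only if $\bm{Y}_{[22]}=\bm{O}$, since $\bm{\Lambda}$ is positive definite. If $r=0$ the face is $\{\bm{O}\}$ and the argument below collapses to bounding $\norm{\bm{X}_+}_{\rm F}$ by the diagonal-block estimate, so I will assume $r\ge 1$.

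\textbf{Step 1: control of the lower-right block.} Let $\bm{X}_+\coloneqq P_{\calS_+^n}(\bm{X})$, so that $\norm{\bm{X}-\bm{X}_+}_{\rm F}\le\epsilon$. Partition
\[
\bm{X}_+=\begin{pmatrix}\bm{A}&\bm{B}\\ \bm{B}^\top&\bm{C}\end{pmatrix}
\]
with $\bm{A}\in\calS_+^r$ and $\bm{C}\in\calS_+^{n-r}$. By Cauchy--Schwarz,
\[
\langle\bm{\Lambda},\bm{C}\rangle=\langle\bm{Z},\bm{X}_+\rangle\le\langle\bm{Z},\bm{X}\rangle+\norm{\bm{Z}}_{\rm F}\epsilon\le(1+\norm{\bm{\Lambda}}_{\rm F})\epsilon .
\]
Since $\bm{C}\succeq\bm{O}$, it follows that
\[
\norm{\bm{C}}_{\rm F}\le\tr\bm{C}\le\frac{\langle\bm{\Lambda},\bm{C}\rangle}{\lambda_{n-r}}\le(\gamma-1)\epsilon .
\]

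\textbf{Step 2: control of the off-diagonal block.} Positive semidefiniteness of $\bm{X}_+$ gives $B_{ij}^2\le A_{ii}C_{jj}$ for every $i,j$. Summing over $i,j$,
\[
\norm{\bm{B}}_{\rm F}^2\le\tr\bm{A}\,\tr\bm{C}\le\sqrt{r}\,\norm{\bm{A}}_{\rm F}\tr\bm{C}.
\]
I then use $\norm{\bm{A}}_{\rm F}\le\norm{\bm{X}_+}_{\rm F}\le\norm{\bm{X}}_{\rm F}+\epsilon$ to obtain
\[
\norm{\bm{B}}_{\rm F}^2\le\sqrt{r}\,(\gamma-1)\,\epsilon\,(\norm{\bm{X}}_{\rm F}+\epsilon).
\]
This is the step where the $\sqrt{\epsilon\norm{\bm{X}}_{\rm F}}$ term is generated.

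\textbf{Step 3: the final bound.} Take $\bm{Y}\coloneqq\Diag(\bm{A},\bm{O})$, which lies in the face. Then
\[
\dist(\bm{X},\calS_+^n\cap\{\bm{Z}\}^\perp)\le\norm{\bm{X}-\bm{X}_+}_{\rm F}+\norm{\bm{X}_+-\bm{Y}}_{\rm F}\le\epsilon+\sqrt{2\norm{\bm{B}}_{\rm F}^2+\norm{\bm{C}}_{\rm F}^2}.
\]
Substituting Steps 1 and 2 gives
\[
\epsilon+\sqrt{\bigl(2\sqrt{r}(\gamma-1)+(\gamma-1)^2\bigr)\epsilon^2+2\sqrt{r}(\gamma-1)\,\epsilon\norm{\bm{X}}_{\rm F}} .
\]
I split this with $\sqrt{a+b}\le\sqrt a+\sqrt b$. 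The $\epsilon\norm{\bm{X}}_{\rm F}$ part is at most $\beta\sqrt{\epsilon\norm{\bm{X}}_{\rm F}}$, because $2\sqrt r(\gamma-1)\le 2(\gamma+1)\sqrt r=\beta^2$.

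The remaining task is to check that
\[
1+\sqrt{2\sqrt{r}(\gamma-1)+(\gamma-1)^2}\le\alpha .
\]
Squaring both sides, it suffices that
\[
2\sqrt{2\sqrt{r}(\gamma-1)+(\gamma-1)^2}+2\sqrt r(\gamma-1)+(\gamma-1)^2\le 2(\gamma+1)r+\gamma^2 .
\]
I will bound the square root by $(\gamma-1)+\sqrt{2\sqrt r(\gamma-1)}$, then use $\sqrt{2\sqrt r(\gamma-1)}\le\tfrac12+\sqrt r(\gamma-1)$ together with $r\ge\sqrt r\ge1$ and $\gamma\ge2$.

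The main obstacle is making these generous constants match the stated $\alpha$ and $\beta$ exactly. The structural argument in Steps 1--3 is clean; the risk lies entirely in the final elementary inequality. If the bookkeeping above comes out slightly short, I would fall back on the cruder bounds $\tr\bm{A}\le\sqrt r\norm{\bm{A}}_{\rm F}$ and $(\gamma-1)\le\gamma+1$ throughout, since the stated $\alpha$ and $\beta$ appear to have been chosen with exactly this slack in mind.
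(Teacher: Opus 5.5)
Your Steps~1--3 are correct and follow a genuinely different route from the paper. The paper never projects $\bm{X}$. It bounds the blocks of $\bm{X}$ itself:
\begin{itemize}
\item $\dist(\bm{X}_{[11]},\calS_+^r)\le\epsilon$, from the principal-submatrix inequality;
\item $\norm{\bm{X}_{[22]}}_{\rm F}\le\gamma\epsilon$, by projecting only the $(2,2)$ block;
\item $\norm{\bm{X}_{[12]}}_{\rm F}^2\le(\gamma+1)(\norm{\bm{X}}_{\rm F}+\sqrt{r}\epsilon)\sqrt{r}\epsilon$, from the Schur complement of $\bm{X}+(\delta+\epsilon)\bm{I}_n\succ\bm{O}$ paired with $\bm{I}_r$, letting $\delta\downarrow0$.
\end{itemize}
It then uses the exact identity $\dist(\bm{X},\calS_+^r\oplus\{0\}^{n-r})^2=\dist(\bm{X}_{[11]},\calS_+^r)^2+\norm{\bm{X}_{[22]}}_{\rm F}^2+2\norm{\bm{X}_{[12]}}_{\rm F}^2$. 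Everything therefore sits under one square root, and $\alpha,\beta$ come out of a single use of $\sqrt{a+b}\le\sqrt{a}+\sqrt{b}$; that is exactly why $\alpha^2=1+2(\gamma+1)r+\gamma^2$.

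Your project-then-truncate argument replaces the Schur complement and the $\delta$-limit by $2\times2$ minors of a genuinely positive semidefinite matrix. It also gives smaller block constants ($\gamma-1$ in place of $\gamma$ and $\gamma+1$). The price is that the triangle inequality leaves an additive $\epsilon$ outside the root, and that is what forces your extra numerical check.

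That check does not go through the way you plan it. The target $1+\sqrt{(\gamma-1)^2+2\sqrt{r}(\gamma-1)}\le\alpha$ is true. But write $S$ for the radicand and apply your two bounds, $\sqrt{S}\le(\gamma-1)+\sqrt{2\sqrt{r}(\gamma-1)}\le(\gamma-1)+\tfrac12+\sqrt{r}(\gamma-1)$. The squared requirement then reduces to $4\sqrt{r}(\gamma-1)\le2r(\gamma-1)+4r$.
\begin{itemize}
\item For $r=1$ this says $\gamma\le3$, which is false in general: $n=2$, $r=1$, $\bm{\Lambda}=(\tfrac12)$ gives $\gamma=4$, and $\gamma\to\infty$ as $\lambda_{n-r}\to0$.
\item It also fails for $r=2,3$ when $\gamma$ is large.
\item For $r=1$, even your first bound alone is too lossy once $\gamma>33/8$.
\end{itemize}
Your fallback goes the wrong way. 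Replacing $\gamma-1$ by $\gamma+1$ on the left makes the target false for $r=1$: the left side is then at least $\gamma+2$, while $\alpha^2=\gamma^2+2\gamma+3<(\gamma+2)^2$.

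The repair is one line; either of the following works.
\begin{itemize}
\item Complete the square: $\sqrt{(\gamma-1)^2+2\sqrt{r}(\gamma-1)}\le(\gamma-1)+\sqrt{r}$. It then suffices that $\gamma+\sqrt{r}\le\alpha$, which holds because $\alpha^2-(\gamma+\sqrt{r})^2=1+r+2\gamma(r-\sqrt{r})\ge0$.
\item More cleanly, in Step~2 use $\norm{\bm{X}_+}_{\rm F}\le\norm{\bm{X}}_{\rm F}$ (the projection only discards negative eigenvalues) instead of $\norm{\bm{X}}_{\rm F}+\epsilon$. Then $\norm{\bm{B}}_{\rm F}^2\le\sqrt{r}(\gamma-1)\epsilon\norm{\bm{X}}_{\rm F}$, and Step~3 directly gives $\gamma\epsilon+\sqrt{2\sqrt{r}(\gamma-1)}\sqrt{\epsilon\norm{\bm{X}}_{\rm F}}$. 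This is at most $\alpha\epsilon+\beta\sqrt{\epsilon\norm{\bm{X}}_{\rm F}}$, since $\alpha\ge\gamma$ and $\beta^2\ge2\sqrt{r}(\gamma-1)$. It also handles $r=0$ uniformly and slightly sharpens the lemma.
\end{itemize}
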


\begin{proof}
We partition $\bm{X}$ into
\begin{equation*}
\bm{X} = \begin{pmatrix}
\bm{X}_{[11]} & \bm{X}_{[12]}\\
\bm{X}_{[12]}^\top & \bm{X}_{[22]}
\end{pmatrix},
\end{equation*}
where $\bm{X}_{[11]}\in \calS^r$, $\bm{X}_{[12]}\in \bbR^{r\times (n-r)}$, and $\bm{X}_{[22]}\in \calS^{n-r}$.
To prove \eqref{eq:eb_positive_diagonal}, we show the following inequalities:
\begin{align}
\dist(\bm{X}_{[11]},\calS_+^r) &\le \epsilon, \label{eq:0416_X11}\\
\lVert\bm{X}_{[22]}\rVert_{\rm F} &\le \gamma\epsilon, \label{eq:0416_X22}\\
\lVert\bm{X}_{[12]}\rVert_{\rm F} &\le \sqrt{(\gamma + 1) (\lVert\bm{X}\rVert_{\rm F} + \sqrt{r}\epsilon)\sqrt{r}\epsilon}. \label{eq:0416_X12}
\end{align}
Indeed, if \eqref{eq:0416_X11}, \eqref{eq:0416_X22}, and \eqref{eq:0416_X12} hold, then it follows that
\begin{align*}
\dist(\bm{X},\calS_+^n \cap \{\bm{Z}\}^\perp) &= \dist(\bm{X},\calS_+^{r}\oplus\{0\}^{n-r})\\
&= \sqrt{\dist(\bm{X}_{[11]},\calS_+^r)^2 + \lVert\bm{X}_{[22]}\rVert_{\rm F}^2 + 2\lVert\bm{X}_{[12]}\rVert_{\rm F}^2}\\
&\le \sqrt{\epsilon^2 + \gamma^2\epsilon^2 + 2(\gamma + 1)(\lVert\bm{X}\rVert_{\rm F} + \sqrt{r}\epsilon)\sqrt{r}\epsilon}\\
&= \sqrt{(1 + 2(\gamma + 1)r + \gamma^2)\epsilon^2 + 2(\gamma + 1)\sqrt{r}\epsilon\lVert\bm{X}\rVert_{\rm F}}\\
&\le \alpha\epsilon + \beta\sqrt{\epsilon\lVert\bm{X}\rVert_{\rm F}},
\end{align*}
so \eqref{eq:eb_positive_diagonal} holds.
We next turn to the proofs of \eqref{eq:0416_X11}, \eqref{eq:0416_X22}, and \eqref{eq:0416_X12}. 

\noindent
\fbox{Proof of \eqref{eq:0416_X11}}
\eqref{eq:0416_X11} follows from $\dist(\bm{X},\calS_+^n) \le \epsilon$ and \eqref{eq:ineq_dist_PSDcone}.

\noindent
\fbox{Proof of \eqref{eq:0416_X22}}
It follows from $\dist(\bm{X},\calS_+^n) \le \epsilon$ and \eqref{eq:ineq_dist_PSDcone} that $\dist(\bm{X}_{[22]},\calS_+^{n-r}) \le \epsilon$.
$\bm{R} \coloneqq - \bm{X}_{[22]} + P_{\calS_+^{n-r}}(\bm{X}_{[22]})$ satisfies $\bm{X}_{[22]} + \bm{R} \in \calS_+^{n-r}$ and $\|\bm{R}\|_{\rm F} \le \epsilon$.
In addition, we see that
\begin{equation*}
\|\bm{X}_{[22]}+\bm{R}\|_{\rm F} \le  \tr(\bm{X}_{[22]}+\bm{R}) \le \frac{\langle\bm{\Lambda},\bm{X}_{[22]}+\bm{R}\rangle}{\lambda_{n-r}},
\end{equation*}
where we use $\bm{X}_{[22]} + \bm{R} \in \calS_+^{n-r}$ to derive the first inequality.
Therefore, we have
\begin{equation*}
\|\bm{X}_{[22]}\|_{\rm F} \le \|\bm{R}\|_{\rm F} + \frac{\langle\bm{\Lambda},\bm{X}_{[22]}\rangle}{\lambda_{n-r}} + \frac{\langle\bm{\Lambda},\bm{R}\rangle}{\lambda_{n-r}} \le \|\bm{R}\|_{\rm F} + \frac{\epsilon}{\lambda_{n-r}} + \frac{\|\bm{\Lambda}\|_{\rm F}\|\bm{R}\|_{\rm F}}{\lambda_{n-r}} \le \gamma\epsilon,
\end{equation*}
where we use $\langle\bm{\Lambda},\bm{X}_{[22]}\rangle =\langle\bm{Z},\bm{X}\rangle \le \epsilon$ and the Cauchy--Schwarz inequality to derive the second inequality and use $\|\bm{R}\|_{\rm F} \le \epsilon$ to derive the third inequality.

\noindent
\fbox{Proof of \eqref{eq:0416_X12}}
By the assumption $\dist(\bm{X},\calS_+^n) \le \epsilon$ and the distance formula in \eqref{eq:dist_PSDcone}, every eigenvalue of $\bm{X}$ is at least $-\epsilon$, and hence $\bm{X} + \epsilon\bm{I}_n \in \calS_+^n$.
This implies that
\begin{equation*}
\bm{X} + (\delta + \epsilon)\bm{I}_n = \begin{pmatrix}
\bm{X}_{[11]} + (\delta+\epsilon)\bm{I}_r & \bm{X}_{[12]}\\
\bm{X}_{[12]}^\top & \bm{X}_{[22]} + (\delta+\epsilon)\bm{I}_{n-r}
\end{pmatrix} \in \setint\calS_+^n
\end{equation*}
for all $\delta > 0$.
Therefore, the Schur complement
\begin{equation}
\bm{X}_{[11]} + (\delta+\epsilon)\bm{I}_r - \bm{X}_{[12]}(\bm{X}_{[22]} + (\delta+\epsilon)\bm{I}_{n-r})^{-1}\bm{X}_{[12]}^\top \label{eq:0416_schur_complement}
\end{equation}
is positive definite.
Taking the inner product of \eqref{eq:0416_schur_complement} with $\bm{I}_r$, we have
\begin{align}
\langle\bm{X}_{[11]} + (\delta+\epsilon)\bm{I}_r,\bm{I}_r\rangle &\ge \langle\bm{X}_{[12]}(\bm{X}_{[22]} + (\delta+\epsilon)\bm{I}_{n-r})^{-1}\bm{X}_{[12]}^\top,\bm{I}_r\rangle \nonumber\\
&= \langle(\bm{X}_{[22]} + (\delta+\epsilon)\bm{I}_{n-r})^{-1},\bm{X}_{[12]}^\top\bm{X}_{[12]}\rangle\nonumber\\
&\ge \lambda_{\rm min}((\bm{X}_{[22]} + (\delta+\epsilon)\bm{I}_{n-r})^{-1})\tr(\bm{X}_{[12]}^\top\bm{X}_{[12]})\nonumber\\
&= \frac{\lVert\bm{X}_{[12]}\rVert_{\rm F}^2}{\lambda_{\rm max}(\bm{X}_{[22]} + (\delta+\epsilon)\bm{I}_{n-r})} \nonumber\\
&\ge \frac{\lVert\bm{X}_{[12]}\rVert_{\rm F}^2}{\lVert\bm{X}_{[22]}\rVert_{\rm F} + \delta+\epsilon} \nonumber\\
&\ge \frac{\lVert\bm{X}_{[12]}\rVert_{\rm F}^2}{(\gamma + 1)\epsilon + \delta}, \label{eq:0416_X12_inner_product}
\end{align}
where we use the following well-known inequality $\langle\bm{A},\bm{B}\rangle \ge \lambda_{\rm min}(\bm{A})\tr(\bm{B})$ for $\bm{A} \in \calS^n$ and $\bm{B} \in \calS_+^n$ (see, for example, \cite[Proposition~8.4.13]{Ber2009}) to derive the second inequality and use \eqref{eq:0416_X22} to derive the last inequality.
In addition, the left-hand side of \eqref{eq:0416_X12_inner_product} can be bounded by
\begin{equation*}
\langle\bm{X}_{[11]} + (\delta+\epsilon)\bm{I}_r,\bm{I}_r\rangle \le \lVert\bm{X}_{[11]} + (\delta+\epsilon)\bm{I}_r\rVert_{\rm F}\sqrt{r} \le (\lVert\bm{X}\rVert_{\rm F} + \sqrt{r}(\delta+\epsilon))\sqrt{r}.
\end{equation*}
Therefore, we have
\begin{equation*}
\lVert\bm{X}_{[12]}\rVert_{\rm F} \le \sqrt{((\gamma + 1)\epsilon + \delta)(\lVert\bm{X}\rVert_{\rm F} + \sqrt{r}(\delta+\epsilon))\sqrt{r}}.
\end{equation*}
Since $\delta > 0$ is arbitrary, we have
\begin{equation*}
\lVert\bm{X}_{[12]}\rVert_{\rm F} \le \sqrt{(\gamma + 1)(\lVert\bm{X}\rVert_{\rm F} + \sqrt{r}\epsilon)\sqrt{r}\epsilon}.
\end{equation*}
\end{proof}

Now, for each $i = 1,\dots,d^*$, we provide a one-step facial residual function for the face $\calF_i$ and the matrix $\bm{Z}_i \in \calF_i^*$.
Each face $\calF_i$ of $\calS_+^n$ is linearly isomorphic to $\calS_+^{r_i}$ for some  $r_i \leq n$.
When $\calF_i = \{\bm{O}\}$, the index $i$ must be $d^* + 1$ and $r_{d^*+1}$ is set to $0$.
Overall, for every $i = 1,\dots, d^*$, there exists $\bm{Q}_i \in \bbR^{n\times r_i}$ whose column vectors are orthonormal such that 
\begin{equation}
\calF_i = \bm{Q}_i\calS_+^{r_i}\bm{Q}_i^\top. \label{eq:Fi}
\end{equation}
In addition, since $\calF_{i+1}$ is also a face of $\calF_i$, there exists an orthogonal matrix $\bm{P}$ of order $r_i$ such that
\begin{equation}
\calF_{i+1} = \bm{Q}_i\bm{P}(\calS_+^{r_{i+1}} \oplus \{0\}^{r_i - r_{i+1}})\bm{P}^\top\bm{Q}_i^\top. \label{eq:Fi+1}
\end{equation}

\begin{lemma}\label{lem:rank_QtopZQ}
Let $\bm{Q}_i \in \bbR^{n\times r_i}$ be given as in \eqref{eq:Fi}.
Then for any $\bm{Z}\in \ri(\calF_i^* \cap \calF_{i+1}^\perp)$,
the matrix $\bm{Q}_i^\top\bm{Z}\bm{Q}_i$ is positive semidefinite and has rank $r_i - r_{i+1}$.
\end{lemma}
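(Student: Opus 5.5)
The plan is to reduce everything to $r_i\times r_i$ coordinates via the isometry $\bm{W}\mapsto \bm{Q}_i\bm{W}\bm{Q}_i^\top$ from $\calS^{r_i}$ onto $\setspan\calF_i$. Set $\bm{W}_Z \coloneqq \bm{Q}_i^\top\bm{Z}\bm{Q}_i$. For every $\bm{A}\in\calS^{r_i}$ we have $\langle \bm{Z},\bm{Q}_i\bm{A}\bm{Q}_i^\top\rangle = \langle \bm{W}_Z,\bm{A}\rangle$. Hence $\bm{Z}\in\calF_i^*$ holds if and only if $\bm{W}_Z\in(\calS_+^{r_i})^*=\calS_+^{r_i}$, which gives positive semidefiniteness. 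In the same way, $\bm{Z}\in\calF_{i+1}^\perp$ holds if and only if $\bm{W}_Z\perp \bm{P}(\calS_+^{r_{i+1}}\oplus\{0\}^{r_i-r_{i+1}})\bm{P}^\top$, using \eqref{eq:Fi+1}. Set $\widetilde{\bm{W}}\coloneqq\bm{P}^\top\bm{W}_Z\bm{P}$. For a PSD matrix $\widetilde{\bm{W}}$, orthogonality to $\calS_+^{r_{i+1}}\oplus\{0\}$ forces the leading $r_{i+1}\times r_{i+1}$ block to vanish. Positive semidefiniteness then kills the off-diagonal blocks as well, so $\widetilde{\bm{W}}=\Diag(\bm{O},\bm{B})$ with $\bm{B}\in\calS_+^{r_i-r_{i+1}}$. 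This already shows $\operatorname{rank}(\bm{W}_Z)\le r_i-r_{i+1}$.

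For the reverse inequality I will use the relative interior. Consider the linear map $\calL\colon \bm{Z}\mapsto \bm{P}^\top\bm{Q}_i^\top\bm{Z}\bm{Q}_i\bm{P}$ restricted to the convex set $C\coloneqq\calF_i^*\cap\calF_{i+1}^\perp$. By the above, $\calL(C)\subseteq \{0\}^{r_{i+1}}\oplus\calS_+^{r_i-r_{i+1}}$.

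I also claim equality. Take $\bm{B}\in\calS_+^{r_i-r_{i+1}}$ and set $\bm{Z}\coloneqq\bm{Q}_i\bm{P}\Diag(\bm{O},\bm{B})\bm{P}^\top\bm{Q}_i^\top$. This $\bm{Z}$ is PSD, so it lies in $\calS_+^n\subseteq\calF_i^*$. It is orthogonal to $\calF_{i+1}$ by the block structure. Its image under $\calL$ is $\Diag(\bm{O},\bm{B})$, because $\bm{Q}_i^\top\bm{Q}_i=\bm{I}_{r_i}$ and $\bm{P}$ is orthogonal.

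Since linear images preserve relative interiors, i.e. $\calL(\ri C)=\ri\calL(C)$ (Rockafellar, Theorem~6.6), any $\bm{Z}\in\ri C$ maps into $\ri(\{0\}\oplus\calS_+^{r_i-r_{i+1}})=\{0\}\oplus\setint\calS_+^{r_i-r_{i+1}}$. So $\bm{B}$ is positive definite and $\operatorname{rank}\bm{W}_Z=\operatorname{rank}\bm{B}=r_i-r_{i+1}$.

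Degenerate cases need a check. If $r_{i+1}=0$, the orthogonality condition is vacuous and the argument still goes through. If $r_i=r_{i+1}$, the strict inclusion is contradicted, so this case cannot occur.

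The main obstacle is the relative-interior step, specifically identifying $\calL(C)$ exactly, surjectivity included, so that the image of $\ri C$ is the interior of the PSD block. The explicit preimage construction above handles it. A secondary point is the Schur-type fact that a PSD matrix with a zero diagonal block has zero off-diagonal blocks. This follows from $2\times2$ principal minors: a zero diagonal entry forces its whole row and column to be zero.
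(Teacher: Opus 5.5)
Your proof is correct and follows essentially the same route as the paper: both pass to the coordinates $\bm{P}^\top\bm{Q}_i^\top\bm{Z}\bm{Q}_i\bm{P}$, characterize $\calF_i^*\cap\calF_{i+1}^\perp$ as the matrices mapped into $\{0\}^{r_{i+1}}\oplus\calS_+^{r_i-r_{i+1}}$, and read off the rank from the relative interior, which corresponds to $\{0\}^{r_{i+1}}\oplus\setint\calS_+^{r_i-r_{i+1}}$. The paper simply asserts this relative-interior description, whereas you justify it through $\mathcal{L}(\ri C)=\ri\mathcal{L}(C)$ together with the explicit check that $\mathcal{L}(C)$ is the entire block cone, and you also spell out why a zero diagonal block kills the off-diagonal blocks; so yours is a slightly more detailed version of the same argument.
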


\begin{proof}
From \eqref{eq:Fi+1}, it follows that 
\begin{equation}
\calF_{i+1}^\perp = \left\{\bm{X} \in \calS^n \relmiddle| \bm{P}^\top\bm{Q}_i^\top\bm{X}\bm{Q}_i\bm{P} \in \left\{\begin{pmatrix}
\bm{O} & \bm{A}_{[12]}\\
\bm{A}_{[12]}^\top & \bm{A}_{[22]}
\end{pmatrix} \relmiddle|
\begin{aligned}
&\bm{A}_{[12]} \in \bbR^{r_{i+1} \times (r_i - r_{i+1})},\\
&\bm{A}_{[22]} \in \calS^{r_i - r_{i+1}}
\end{aligned}\right\}\right\}.
\label{eq:Fi+1_perp}
\end{equation}
In addition, we see that
\begin{equation}
\calF_i^* = \{\bm{X} \in \calS^n \mid \bm{Q}_i^\top \bm{X}\bm{Q}_i \in \calS_+^{r_i}\} = \{\bm{X} \in \calS^n \mid \bm{P}^\top\bm{Q}_i^\top \bm{X}\bm{Q}_i\bm{P} \in \calS_+^{r_i}\}. \label{eq:Fi_dual}
\end{equation}
From \eqref{eq:Fi+1_perp} and \eqref{eq:Fi_dual}, we have 
\begin{equation}
\ri(\calF_i^* \cap \calF_{i+1}^\perp) = \{\bm{X} \in \calS^n \mid \bm{P}^\top\bm{Q}_i^\top\bm{X}\bm{Q}_i\bm{P} \in \{0\}^{r_{i+1}} \oplus \setint\calS_+^{r_i - r_{i+1}}\}. \label{eq:ri_Fidual_cap_Fi+1perp}
\end{equation}

Let $\bm{Z} \in \ri(\calF_i^* \cap \calF_{i+1}^\perp)$.
By \eqref{eq:ri_Fidual_cap_Fi+1perp}, there exists $\bm{A}_{[22]} \in \setint\calS_+^{r_i - r_{i+1}}$ such that $\bm{P}^\top\bm{Q}_i^\top\bm{Z}\bm{Q}_i\bm{P} = \Diag(\bm{O},\bm{A}_{[22]})$.
Then it follows that $\bm{Q}_i^\top\bm{Z}\bm{Q}_i = \bm{P}\Diag(\bm{O},\bm{A}_{[22]})\bm{P}^\top$.
By the choice of $\bm{A}_{[22]}$, the matrix $\bm{Q}_i^\top\bm{Z}\bm{Q}_i$ is positive semidefinite and has rank $r_i - r_{i+1}$.
Therefore, we obtain the desired result.
\end{proof}

In particular, by the following lemma, Lemma~\ref{lem:rank_QtopZQ} can be applied to the matrices $\bm{Z}_1,\dots,\bm{Z}_{d^*}$.

\begin{lemma}\label{lem:Zi_in_ri}
For every $i = 1,\dots,d^*$, $\bm{Z}_i \in \ri(\calF_i^* \cap \calF_{i+1}^\perp)$ holds.
\end{lemma}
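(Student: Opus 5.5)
The plan is to reduce everything to the $r_i\times r_i$ compressed matrix $\bm{W}\coloneqq\bm{Q}_i^\top\bm{Z}_i\bm{Q}_i$. I will show that in the basis given by the orthogonal matrix $\bm{P}$ of \eqref{eq:Fi+1} it has the block form $\Diag(\bm{O},\bm{A}_{[22]})$ with $\bm{A}_{[22]}$ positive definite. Then I will invoke the description \eqref{eq:ri_Fidual_cap_Fi+1perp} of $\ri(\calF_i^*\cap\calF_{i+1}^\perp)$, which is derived in the proof of Lemma~\ref{lem:rank_QtopZQ} and does not depend on the choice of $\bm{Z}$.

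First, since $\bm{Z}_i\in\calF_i^*$, the characterization \eqref{eq:Fi_dual} gives $\bm{W}\in\calS_+^{r_i}$. Next I rewrite $\calF_{i+1}=\calF_i\cap\{\bm{Z}_i\}^\perp$ in coordinates. Every element of $\calF_i$ has the form $\bm{Q}_i\bm{A}\bm{Q}_i^\top$ with $\bm{A}\in\calS_+^{r_i}$, and $\langle\bm{Z}_i,\bm{Q}_i\bm{A}\bm{Q}_i^\top\rangle=\langle\bm{W},\bm{A}\rangle$. For positive semidefinite $\bm{W}$ and $\bm{A}$, we have $\langle\bm{W},\bm{A}\rangle=0$ if and only if $\bm{W}\bm{A}=\bm{O}$, that is, if and only if the range of $\bm{A}$ is contained in $\ker\bm{W}$. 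Hence
\begin{equation*}
\calF_{i+1}=\bm{Q}_i\{\bm{A}\in\calS_+^{r_i}\mid \text{range}(\bm{A})\subseteq\ker\bm{W}\}\bm{Q}_i^\top .
\end{equation*}

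Since $\bm{Q}_i^\top\bm{Q}_i=\bm{I}_{r_i}$, the map $\bm{A}\mapsto\bm{Q}_i\bm{A}\bm{Q}_i^\top$ is injective, with left inverse $\bm{X}\mapsto\bm{Q}_i^\top\bm{X}\bm{Q}_i$. Comparing the display above with \eqref{eq:Fi+1} therefore gives
\begin{equation*}
\{\bm{A}\in\calS_+^{r_i}\mid \text{range}(\bm{A})\subseteq\ker\bm{W}\}=\bm{P}(\calS_+^{r_{i+1}}\oplus\{0\}^{r_i-r_{i+1}})\bm{P}^\top .
\end{equation*}
The union of the ranges of the matrices in the left-hand set is $\ker\bm{W}$, since the orthogonal projector onto $\ker\bm{W}$ belongs to that set. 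For the right-hand set, this union is the span of the first $r_{i+1}$ columns of $\bm{P}$. Hence $\ker\bm{W}$ is exactly that span. This identification of $\ker\bm{W}$ is the step I expect to need the most care; the argument via the projector settles it.

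It follows that $\bm{W}\bm{P}\bm{e}_j=\bm{0}$ for $j\le r_{i+1}$. By symmetry, $\bm{P}^\top\bm{W}\bm{P}=\Diag(\bm{O},\bm{A}_{[22]})$ with $\bm{A}_{[22]}\in\calS_+^{r_i-r_{i+1}}$. Moreover, $\bm{A}_{[22]}$ has trivial kernel, because any further kernel vector of $\bm{A}_{[22]}$ would enlarge $\ker\bm{W}$ beyond the first $r_{i+1}$ columns of $\bm{P}$. Thus $\bm{A}_{[22]}\in\setint\calS_+^{r_i-r_{i+1}}$. By \eqref{eq:ri_Fidual_cap_Fi+1perp}, this is precisely the condition $\bm{Z}_i\in\ri(\calF_i^*\cap\calF_{i+1}^\perp)$, which proves the lemma.
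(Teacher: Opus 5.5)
Your proof is correct, but it takes a genuinely different route from the paper.

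\textbf{The paper's route.} The paper argues structurally. It first shows that $\calF_i^*$ is a nice cone. To do so, it writes $\calF_i^*$ as the lineality space $\calF_i^\perp$ plus the pointed component $\calF_i=\calS_+^n\cap\setspan\calF_i$. It then invokes known results: a cone is nice if and only if its pointed component is, and intersecting the nice cone $\calS_+^n$ with a subspace preserves niceness. Finally, it applies \cite[Proposition~1]{Lourenco2021}, which places any $\bm{Z}\in\calF_i^*$ exposing $\calF_{i+1}$ in $\ri(\calF_i^*\cap\calF_{i+1}^\perp)$.

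\textbf{Your route.} You compute directly. You use only two facts: $\langle\bm{W},\bm{A}\rangle=0\iff\bm{W}\bm{A}=\bm{O}$ for positive semidefinite matrices, and the injectivity of $\bm{A}\mapsto\bm{Q}_i\bm{A}\bm{Q}_i^\top$. From these you identify $\ker(\bm{Q}_i^\top\bm{Z}_i\bm{Q}_i)$ with the span of the first $r_{i+1}$ columns of $\bm{P}$. The projector argument for this identification is sound, and it gives the block form $\Diag(\bm{O},\bm{A}_{[22]})$ with $\bm{A}_{[22]}$ positive definite.

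\textbf{What each buys.} The paper's argument transfers to any cone whose face duals are nice, which fits the general facial-residual framework, but it imports several external results. Yours is self-contained and specific to $\calS_+^n$. It also directly yields that $\bm{Q}_i^\top\bm{Z}_i\bm{Q}_i$ is positive semidefinite of rank $r_i-r_{i+1}$. That rank statement is all Proposition~\ref{prop:1FRF} actually uses, so your argument could bypass Lemma~\ref{lem:rank_QtopZQ} and the relative-interior formulation entirely. Like the paper's proof, yours uses only $\bm{Z}_i\in\calF_i^*$ and $\calF_{i+1}=\calF_i\cap\{\bm{Z}_i\}^\perp$. It therefore also covers the converse claim made in the paper's subsequent remark.

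\textbf{One point to tighten.} The formula \eqref{eq:ri_Fidual_cap_Fi+1perp} that you borrow is stated in the paper with little justification. It is correct: $\calF_i^*\cap\calF_{i+1}^\perp$ is the preimage of $\{0\}^{r_{i+1}}\oplus\calS_+^{r_i-r_{i+1}}$ under the surjective linear map $\bm{X}\mapsto\bm{P}^\top\bm{Q}_i^\top\bm{X}\bm{Q}_i\bm{P}$, and relative interiors pull back under surjective linear maps. Adding one sentence to that effect would make your argument fully self-contained.
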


\begin{proof}
{We prove this lemma using the notion of \emph{niceness}.
Recall that a closed convex cone $\calK$ in $\calS^n$ is said to be nice if $\calK^* + \calF^\perp$ is closed for every face $\calF$ of $\calK$.

We note that $\calF_i^\perp$ is the lineality space of $\calF_i^*$, namely, the largest subspace included in $\calF_i^*$.
It is known that $\calF_i^*$ can be written as the sum of its lineality space $\calF_i^\perp$ and its pointed component $\calF_i^* \cap (\calF_i^\perp)^\perp$~\cite[page~165]{Rockafellar1970}.
Moreover, $\calF_i^*$ is nice if and only if its pointed component $\calF_i^* \cap (\calF_i^\perp)^\perp$ is nice~\cite[Proposition~2.6.\Rnum{2}]{RT2019}.
We observe that
\begin{equation*}
\calF_i^* \cap (\calF_i^\perp)^\perp = \calF_i^* \cap \setspan\calF_i = \calF_i = \calS_+^n \cap \setspan\calF_i.
\end{equation*}
Here, the second equality follows from the self-duality of $\calF_i$ in the subspace $\setspan\calF_i$.
This self-duality follows from the fact that $\calF_i$ is linearly isomorphic to $\calS_+^{r_i}$ as shown in \eqref{eq:Fi} and that $\calS_+^{r_i}$ is self-dual.
Since $\calS_+^n$ is nice~\cite[Section~2.5]{Pataki2007} and the subspace $\setspan\calF_i$ is nice as a polyhedral cone~\cite[Section~1.1]{Pataki2007}, \cite[Proposition~5]{Pataki2013_On} implies that their intersection $\calS_+^n \cap \setspan\calF_i$, which is equal to $\calF_i^* \cap (\calF_i^\perp)^\perp$, is nice.
This means that the entire cone $\calF_i^*$ is also nice.
Therefore, by \cite[Proposition~1]{Lourenco2021}, we conclude that $\bm{Z}_i \in \ri(\calF_i^* \cap \calF_{i+1}^\perp)$.}
\end{proof}

\begin{proposition}[\textbf{Constants for the one-step facial residual functions}] \label{prop:1FRF}
For each $i = 1,\dots,d^*$, define the following constants:
\begin{align}
\gamma_i &\coloneqq \frac{\norm{\bm{Q}_i^\top\bm{Z}_i\bm{Q}_i}_{\rm F}+1}{\lambda_{\rm min}^+(\bm{Q}_i^\top\bm{Z}_i\bm{Q}_i)} + 1, \label{eq:def_gammai}\\
\alpha_i &\coloneqq \sqrt{1 + 2(\gamma_i + 1)r_{i+1} + \gamma_i^2}, \label{eq:def_alphai}\\
\beta_i  &\coloneqq \sqrt{2(\gamma_i + 1)\sqrt{r_{i+1}}}. \label{eq:def_betai}
\end{align} 
Then, for any $\bm{X}\in \setspan\calF_i$ and $\epsilon \in \bbR_+$ such that $\dist(\bm{X},\calF_i)\le \epsilon$ and $\langle\bm{Z}_i,\bm{X}\rangle \le \epsilon$,
we have $\dist(\bm{X},\calF_{i+1}) \le \alpha_i\epsilon + \beta_i\sqrt{\epsilon\lVert\bm{X}\rVert_{\rm F}}$.
In other words, for $\alpha_i$ and $\beta_i$ defined above, $\alpha_i s + \beta_i \sqrt{st}$ is a one-step facial residual function for $\calF_i$ and $\bm{Z}_i$.
\end{proposition}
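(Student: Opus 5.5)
The plan is to reduce Proposition~\ref{prop:1FRF} to Lemma~\ref{lem:1FRF_partialPD}. We transport everything from $\setspan\calF_i$ to $\calS^{r_i}$ through the isometry induced by $\bm{Q}_i$, and then diagonalize $\bm{Q}_i^\top\bm{Z}_i\bm{Q}_i$.

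First, by Lemma~\ref{lem:Zi_in_ri} we have $\bm{Z}_i\in\ri(\calF_i^*\cap\calF_{i+1}^\perp)$. Lemma~\ref{lem:rank_QtopZQ} then shows that $\bm{Q}_i^\top\bm{Z}_i\bm{Q}_i\in\calS_+^{r_i}$ has rank $r_i-r_{i+1}$. Its proof gives more: there is an orthogonal $\bm{P}$ of order $r_i$ (the one in \eqref{eq:Fi+1}) with $\bm{P}^\top\bm{Q}_i^\top\bm{Z}_i\bm{Q}_i\bm{P}=\Diag(\bm{O},\bm{A}_{[22]})$ and $\bm{A}_{[22]}$ positive definite. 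Diagonalize $\bm{A}_{[22]}$ by an orthogonal matrix of order $r_i-r_{i+1}$, with eigenvalues sorted in decreasing order, and absorb this matrix into $\bm{P}$; call the result $\bm{P}'$. This gives $\bm{P}'^\top\bm{Q}_i^\top\bm{Z}_i\bm{Q}_i\bm{P}'=\Diag(\bm{O},\bm{\Lambda})$ with $\bm{\Lambda}$ positive diagonal. The modification acts only on the lower-right block, so $\calF_{i+1}=\bm{Q}_i\bm{P}'(\calS_+^{r_{i+1}}\oplus\{0\}^{r_i-r_{i+1}})\bm{P}'^\top\bm{Q}_i^\top$ still holds. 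Moreover $\norm{\bm{\Lambda}}_{\rm F}=\norm{\bm{Q}_i^\top\bm{Z}_i\bm{Q}_i}_{\rm F}$ and $\lambda_{\rm min}(\bm{\Lambda})=\lambda_{\rm min}^+(\bm{Q}_i^\top\bm{Z}_i\bm{Q}_i)$. Hence the constant $\gamma$ of Lemma~\ref{lem:1FRF_partialPD}, applied with $n\leftarrow r_i$ and $r\leftarrow r_{i+1}$, equals $\gamma_i$, and $\alpha,\beta$ coincide with $\alpha_i,\beta_i$. The hypothesis $r<n$ holds because $\calF_{i+1}\subsetneq\calF_i$.

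Next, define $T\colon\setspan\calF_i\to\calS^{r_i}$ by $T(\bm{Y})\coloneqq\bm{P}'^\top\bm{Q}_i^\top\bm{Y}\bm{Q}_i\bm{P}'$. Since $\setspan\calF_i=\bm{Q}_i\calS^{r_i}\bm{Q}_i^\top$, every $\bm{Y}$ in this span satisfies $\bm{Y}=\bm{Q}_i\bm{Q}_i^\top\bm{Y}\bm{Q}_i\bm{Q}_i^\top$. Together with $\bm{Q}_i^\top\bm{Q}_i=\bm{I}_{r_i}$, this makes $T$ a linear isometric bijection. It maps $\calF_i$ onto $\calS_+^{r_i}$ and $\calF_{i+1}$ onto $\calS_+^{r_{i+1}}\oplus\{0\}^{r_i-r_{i+1}}=\calS_+^{r_i}\cap\{\Diag(\bm{O},\bm{\Lambda})\}^\perp$. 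Now take $\bm{X}\in\setspan\calF_i$ with $\dist(\bm{X},\calF_i)\le\epsilon$ and $\langle\bm{Z}_i,\bm{X}\rangle\le\epsilon$. The nearest point of $\calF_i$ to $\bm{X}$ lies in $\setspan\calF_i$, so $\dist(T\bm{X},\calS_+^{r_i})=\dist(\bm{X},\calF_i)\le\epsilon$. Substituting $\bm{X}=\bm{Q}_i\bm{Q}_i^\top\bm{X}\bm{Q}_i\bm{Q}_i^\top$ gives $\langle\Diag(\bm{O},\bm{\Lambda}),T\bm{X}\rangle=\langle\bm{Q}_i^\top\bm{Z}_i\bm{Q}_i,\bm{Q}_i^\top\bm{X}\bm{Q}_i\rangle=\langle\bm{Z}_i,\bm{X}\rangle\le\epsilon$. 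Lemma~\ref{lem:1FRF_partialPD} therefore yields $\dist(T\bm{X},\calS_+^{r_{i+1}}\oplus\{0\}^{r_i-r_{i+1}})\le\alpha_i\epsilon+\beta_i\sqrt{\epsilon\norm{T\bm{X}}_{\rm F}}$.

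Finally, pull this back through $T$. Because $\calF_{i+1}\subseteq\setspan\calF_i$, $T$ is an isometry there, and $\norm{T\bm{X}}_{\rm F}=\norm{\bm{X}}_{\rm F}$, we obtain $\dist(\bm{X},\calF_{i+1})\le\alpha_i\epsilon+\beta_i\sqrt{\epsilon\norm{\bm{X}}_{\rm F}}$. The function $\alpha_i s+\beta_i\sqrt{st}$ is clearly nondecreasing in each argument and vanishes at $s=0$, so it is a one-step facial residual function.

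The main obstacle is the bookkeeping in the first step. One must make sure that the basis used to diagonalize $\bm{Q}_i^\top\bm{Z}_i\bm{Q}_i$ also puts $\calF_{i+1}$ in the block form required by Lemma~\ref{lem:1FRF_partialPD}; the relative-interior property from Lemma~\ref{lem:Zi_in_ri} is exactly what guarantees this. A secondary point is the degenerate case $r_{i+1}=0$, where $\calF_{i+1}=\{\bm{O}\}$. There the $[11]$ and $[12]$ blocks are empty, and the argument of Lemma~\ref{lem:1FRF_partialPD} still goes through: only \eqref{eq:0416_X22} is needed, and we get $\alpha_i=\sqrt{1+\gamma_i^2}$ and $\beta_i=0$.
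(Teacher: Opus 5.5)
Your proposal is correct and follows the paper's proof. Both arguments pass from $\bm{X}\in\setspan\calF_i$ to $\bm{P}^\top\bm{Q}_i^\top\bm{X}\bm{Q}_i\bm{P}$ through the isometry that brings $\bm{Q}_i^\top\bm{Z}_i\bm{Q}_i$ to the form $\Diag(\bm{O},\bm{\Lambda})$, whose rank is $r_i-r_{i+1}$ by Lemmas~\ref{lem:rank_QtopZQ} and~\ref{lem:Zi_in_ri}. They then apply Lemma~\ref{lem:1FRF_partialPD}, whose constants coincide with $\gamma_i,\alpha_i,\beta_i$.

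Your extra bookkeeping (aligning the eigenbasis with the block form of $\calF_{i+1}$, and treating the case $r_{i+1}=0$) is sound. The alignment is not really needed, though. The identity $\langle\bm{Z}_i,\bm{Y}\rangle=\langle\Diag(\bm{O},\bm{\Lambda}),T\bm{Y}\rangle$ for $\bm{Y}\in\setspan\calF_i$ already shows that $T$ maps $\calF_{i+1}=\calF_i\cap\{\bm{Z}_i\}^\perp$ onto $\calS_+^{r_i}\cap\{\Diag(\bm{O},\bm{\Lambda})\}^\perp$.
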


\begin{proof}
Since $\bm{Q}_i^\top\bm{Z}_i\bm{Q}_i$ is a positive semidefinite matrix of rank $r_i-r_{i+1}$ by Lemmas~\ref{lem:rank_QtopZQ} and \ref{lem:Zi_in_ri}, there exist an orthogonal matrix $\bm{P}$ of order $r_i$ and a diagonal matrix $\bm{\Lambda}$ of order $r_i-r_{i+1}$ with positive diagonal elements such that $\bm{Q}_i^\top\bm{Z}_i\bm{Q}_i = \bm{P}\Diag(\bm{O},\bm{\Lambda})\bm{P}^\top$.

Let $\bm{X}\in \setspan\calF_i$ and $\epsilon \in \bbR_+$ be such that $\dist(\bm{X},\calF_i)\le \epsilon$ and $\langle\bm{Z}_i,\bm{X}\rangle \le \epsilon$. We have
\begin{align}
\dist(\bm{P}^\top\bm{Q}_i^\top\bm{X}\bm{Q}_i\bm{P},\calS_+^{r_i}) &= \dist(\bm{X},\calF_i) \le \epsilon, \nonumber\\
\langle\Diag(\bm{O},\bm{\Lambda}),\bm{P}^\top\bm{Q}_i^\top\bm{X}\bm{Q}_i\bm{P}\rangle &= \langle\bm{Z}_i,\bm{X}\rangle \le \epsilon,\nonumber\\
\norm{\bm{P}^\top\bm{Q}_i^\top\bm{X}\bm{Q}_i\bm{P}}_{\rm F} &= \norm{\bm{X}}_{\rm F}. \label{eq:norm_PQXQP}
\end{align}
Applying Lemma~\ref{lem:1FRF_partialPD} to $\bm{P}^\top\bm{Q}_i^\top\bm{X}\bm{Q}_i\bm{P}$ and $\Diag(\bm{O},\bm{\Lambda})$,  the resulting constants coincide with $\gamma_i$, $\alpha_i$, $\beta_i$ as defined in 
\eqref{eq:def_gammai}, \eqref{eq:def_alphai}, \eqref{eq:def_betai} and we have
\begin{align*}
\dist(\bm{X},\calF_{i+1}) &= \dist(\bm{X},\calF_i \cap \{\bm{Z}_i\}^\perp)\\
&=\dist(\bm{P}^\top\bm{Q}_i^\top\bm{X}\bm{Q}_i\bm{P},\calS_+^{r_i}\cap\{\Diag(\bm{O},\bm{\Lambda})\}^\perp)\\
&\le \alpha_i\epsilon + \beta_i\sqrt{\epsilon\lVert\bm{P}^\top\bm{Q}_i^\top\bm{X}\bm{Q}_i\bm{P}\rVert_{\rm F}}\\
&= \alpha_i\epsilon + \beta_i\sqrt{\epsilon\lVert\bm{X}\rVert_{\rm F}},
\end{align*}
where the final equality follows from \eqref{eq:norm_PQXQP}.
\end{proof}

\begin{remark}\label{rem:alpha_beta}
We discuss lower bounds for the constants $\alpha_i$ and $\beta_i$ introduced in \eqref{eq:def_alphai} and \eqref{eq:def_betai}, respectively.
From \eqref{eq:def_alphai} and \eqref{eq:def_betai}, we see that $\alpha_i \geq \max\{\beta_i,1\}$.
The constant
$\beta_i$ depends on how the affine space $\calV$ intersects  $\calS_+^n$.
When $\calV \cap \calS_+^n = \{\bm{O}\}$, it follows from the proof of \cite[Proposition~27]{Lourenco2021} that we can take a matrix in $\calS_+^n \cap \calV^\perp$ that exposes the face $\{\bm{O}\}$ of $\calS_+^n$, so that $d^* \le 1$.
If $d^* = 1$, then we have $r_2 = 0$, and \eqref{eq:def_betai} implies $\beta_1 = 0$.
When $\calV \cap \calS_+^n \neq \{\bm{O}\}$, for every $i = 1,\dots,d^* + 1$, the face $\calF_i$ strictly includes $\{\bm{O}\}$ and $r_i \ge 1$ holds, so we see from \eqref{eq:def_betai} that $\beta_i \ge 1$.
\end{remark}

\begin{remark}[\textbf{Comparison to earlier work}]
In \cite[Theorem~35]{Lourenco2021} it was shown that facial residual functions for symmetric cones can be taken to be of the form $\alpha s + \beta \sqrt{s t}$ for nonnegative constants $\alpha$ and $\beta$.
This result also applies to $\calS^n_+$ since it is a symmetric cone~\cite[Section~\RNum{1}.2]{FK1994}.
However, the constants $\alpha$ and $\beta$ are never computed explicitly in \cite{Lourenco2021}.
In contrast, Proposition~\ref{prop:1FRF}
leads to a computable formula for $\alpha_i$ and $\beta_i$ and clarifies how the face $\calF_i$ and the matrix $\bm{Z}_i$ affect these  constants.

Specifically, the constant $\gamma_i$ defined in \eqref{eq:def_gammai} depends on the choice of the matrix $\bm{Z}_i$. 
Once the faces $\calF_1,\dots,\calF_{d^*+1}$ are fixed, any matrix $\bm{Z} \in \ri(\calF_i^* \cap \calF_{i+1}^\perp)$ satisfies $\calF_{i+1} = \calF_i \cap \{\bm{Z}\}^\perp$.
Conversely, any matrix $\bm{Z} \in \calF_i^*$ satisfying $\calF_{i+1} = \calF_i \cap \{\bm{Z}\}^\perp$ belongs to $\ri(\calF_i^* \cap \calF_{i+1}^\perp)$.
The latter can be shown by the same argument as in the proof of Lemma~\ref{lem:Zi_in_ri}.
Hence, the matrix 
$\bm{Z}_i \in \ri(\calF_i^* \cap \calF_{i+1}^\perp) \cap \calV^\perp \cap \{\bm{Z}\in \calS^n \mid \lVert\bm{Z}\rVert_{\rm F} = 1\}$ 
can be chosen in order to minimize $\gamma_i$. The following example shows that the ``deeper'' $\bm{Z}_i$ lies in this set, the smaller the resulting  $\gamma_i$ becomes. 

Recall that $\bm{E}_{11}$ is the matrix whose $(1,1)$th element is $1$ and the other elements are $0$.
Let $\calF_1 \coloneqq \calS_+^n$, $\calF_2 \coloneqq \bbR_+\bm{E}_{11}$, and $\calV \coloneqq \{\bm{Z}\in \calS^n \mid Z_{22} = \cdots = Z_{nn} = 0\}$.
Then, $\calF_2$ is a face of $\calF_1$, and we have
\begin{equation*}
\ri(\calF_1^* \cap \calF_2^\perp) \cap \calV^\perp \cap \{\bm{Z}\in \calS^n \mid \lVert\bm{Z}\rVert_{\rm F} = 1\} = \{\Diag(0,Z_{22},\dots,Z_{nn}) \mid Z_{22},\dots,Z_{nn} > 0,\ {\textstyle \sqrt{Z_{22}^2 + \cdots +  Z_{nn}^2}} = 1\}.
\end{equation*}
For each $\bm{Z} \in \ri(\calF_1^* \cap \calF_2^\perp) \cap \calV^\perp \cap \{\bm{Z}\in \calS^n \mid \lVert\bm{Z}\rVert_{\rm F} = 1\}$,  $\gamma_1$ becomes
\begin{equation}
\frac{2}{\min\{Z_{22},\dots,Z_{nn}\}} + 1. \label{eq:hkappa_ex}
\end{equation}
Let us consider \eqref{eq:hkappa_ex} as a function of $Z_{22},\dots,Z_{nn}$. It attains its global minimum at $Z_{22} = \dots = Z_{nn} = \frac{1}{\sqrt{n-1}}$ under the condition that $Z_{22},\dots,Z_{nn} > 0$ and $\sqrt{Z_{22}^2 + \cdots +  Z_{nn}^2} = 1$.

We can interpret this result in a geometrical way.
For each $Z_{22},\dots,Z_{nn}$ such that $Z_{22},\dots,Z_{nn} > 0$ and $\sqrt{Z_{22}^2 + \cdots +  Z_{nn}^2} = 1$, the smallest perturbation that makes the vector $(Z_{22},\dots,Z_{nn})^\top$ not be in $\setint\bbR_+^{n-1}$ is
\begin{equation*}
\inf_{\bm{a} \in \bbR^{n-1}}\{\lVert\bm{a}\rVert_2 \mid (Z_{22},\dots,Z_{nn})^\top + \bm{a} \not\in \setint\bbR_+^{n-1}\} = \min\{Z_{22},\dots,Z_{nn}\},
\end{equation*}
which is maximized at $Z_{22} = \dots = Z_{nn} = \frac{1}{\sqrt{n-1}}$.
\end{remark}

\subsection{Qualitative error bound for the underlying semidefinite feasibility problem}\label{subsec:eb_originalSDP} 
In this subsection, by combining the ingredients derived in Sections~\ref{subsec:eb_reduced} and \ref{subsec:1-FRF}, we obtain the final qualitative radial-type H\"{o}lder error bound for the problem $\Feas(\calV,\calS_+^n)$.
For each $i = 1,\dots,d^*$, let $\alpha_i s + \beta_i\sqrt{s t}$ be the one-step facial residual function for $\calF_i$ and $\bm{Z}_i$ derived in Proposition~\ref{prop:1FRF} and let
\begin{equation*}
\widehat{\psi}_i(s,t) \coloneqq \begin{cases}
\alpha_1 s + \beta_1\sqrt{st} & \text{($i=1$)},\\
(2\alpha_i + 1)s + \beta_i\sqrt{2st} & \text{($i\ge 2$)}.
\end{cases}
\end{equation*}
For each $i = 0,\dots,d^*$, we define $\phi_i \colon \bbR_+\times \bbR_+ \to \bbR_+$ recursively as follows:\footnote{The operation of constructing $\phi_i$ for $i\ge 2$ from $\widehat{\psi}_i$ and $\phi_{i-1}$ is called \emph{diamond composition} in \cite{LLP2023}.}
\begin{equation}
\phi_i(s,t) \coloneqq \begin{cases}
s & \text{($i = 0$)},\\
\widehat{\psi}_1(s,t) & \text{($i = 1$)},\\
\widehat{\psi}_i(s + \phi_{i-1}(s,t),t) & \text{($i \ge 2$)}.
\end{cases} \label{eq:0416_phi}
\end{equation}
When $i\ge 1$, the nonnegativity of $\phi_i(s,t)$ follows from that of $\widehat{\psi}_i(s,t)$.
In the following lemma, we show that the function $\phi_i$ defined in \eqref{eq:0416_phi} satisfies $\phi_i(s,t) \ge s$. 

\begin{lemma}\label{lem:lb_phi_i}
For each $i = 0,\dots,d^*$, we have $\phi_i(s,t) \ge s$ for all $s,t\in \bbR_+$.
\end{lemma}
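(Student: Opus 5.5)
The plan is to argue by induction on $i$, using only the recursive definition \eqref{eq:0416_phi}, the nonnegativity of the constants, and the lower bound $\alpha_i \ge 1$ from Remark~\ref{rem:alpha_beta}. The base case $i=0$ is immediate, since $\phi_0(s,t)=s$ by definition.

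For $i=1$, we have $\phi_1(s,t)=\widehat{\psi}_1(s,t)=\alpha_1 s+\beta_1\sqrt{st}$. Here $\beta_1\sqrt{st}\ge 0$ and $\alpha_1\ge 1$, where the latter is read off from \eqref{eq:def_alphai}: the radicand satisfies $1+2(\gamma_1+1)r_2+\gamma_1^2\ge 1$. Hence $\phi_1(s,t)\ge s$.

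For the inductive step, take $i\ge 2$ and assume $\phi_{i-1}(s,t)\ge s$, or even just $\phi_{i-1}(s,t)\ge 0$. Set $u\coloneqq s+\phi_{i-1}(s,t)$. Then $u\ge s\ge 0$, which also confirms that $u$ lies in the domain $\bbR_+$. By definition,
\begin{equation*}
\phi_i(s,t)=\widehat{\psi}_i(u,t)=(2\alpha_i+1)u+\beta_i\sqrt{2ut}\ge (2\alpha_i+1)u\ge u\ge s,
\end{equation*}
using $\alpha_i\ge 0$ and $\beta_i\ge 0$. In fact this gives the stronger bound $\phi_i(s,t)\ge 3s$ for $i\ge 2$, but only $\phi_i(s,t)\ge s$ is needed.

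There is no real obstacle. The only points to check are that all constants $\alpha_i,\beta_i$ are nonnegative, which holds because they are defined as square roots of nonnegative quantities (note $\gamma_i>0$), and that $\alpha_1\ge 1$, which handles the case $i=1$ where the coefficient of $s$ has no ``$+1$'' term. The fact that $\phi_i\ge 0$, already noted before the statement, is what makes the argument $u$ of $\widehat{\psi}_i$ nonnegative, so that the composition is well defined.
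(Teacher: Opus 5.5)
Your proof is correct and is essentially the paper's argument: $i=0$ holds by definition, $i=1$ uses $\alpha_1\ge 1$ and $\beta_1\ge 0$, and $i\ge 2$ uses only $\alpha_i,\beta_i\ge 0$ together with $\phi_{i-1}(s,t)\ge 0$. As you note, the inductive hypothesis is needed only for that nonnegativity, which is exactly how the paper argues.
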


\begin{proof}
When $i = 0$, by definition, we have $\phi_0(s,t) = s$.
When $i = 1$, it follows that
\begin{equation*}
\phi_1(s,t) = \widehat{\psi}_1(s,t) = \alpha_1 s + \beta_1 \sqrt{st} \ge s,
\end{equation*}
where we use $\alpha_1 \ge 1$ and $\beta_1 \ge 0$ (see Remark~\ref{rem:alpha_beta}) to derive the inequality.
When $i\ge 2$, we see that
\begin{equation*}
\phi_i(s,t) = \widehat{\psi}_i(s + \phi_{i-1}(s,t) ,t) = (2\alpha_i+1)(s + \phi_{i-1}(s,t)) + \beta_i\sqrt{2(s + \phi_{i-1}(s,t))t} \ge s,
\end{equation*}
where we use $\alpha_i,\beta_i \ge 0$ and $\phi_{i-1}(s,t) \ge 0$ to derive the inequality.
\end{proof}

In the following theorem, we show that the distance to the face $\calF_{d^*+1}$ can be bounded using the function $\phi_{d^*}$.
To ease the notation, we recall the definition of $\kappa$ in Theorem~\ref{thm:kappaB}, and define 
\begin{equation}\label{eq:dx}
\calE_{\rm b}(\bm{X}) \coloneqq \max\{\dist(\bm{X},\calV),\dist(\bm{X},\calS_+^n)\}.
\end{equation}

\begin{theorem}
Let the function $\phi_i$ be given as in \eqref{eq:0416_phi} and $\calE_{\rm b}(\bm{X})$ be defined as in \eqref{eq:dx}. 
Then, for every $i = 0,\dots,d^*$, we have
\begin{equation}
\dist(\bm{X},\calF_{i+1}) \le \phi_i(\calE_{\rm b}(\bm{X}),\norm{\bm{X}}_{\rm F}) \text{ for all $\bm{X} \in \calS^n$}. \label{eq:dist_Fl_bound}
\end{equation}
In particular, it follows that
\begin{equation}
\dist(\bm{X},\calV\cap\calS_+^n)\le 
\kappa(\norm{\bm{X}}_{\rm F})\phi_{d^*}(\calE_{\rm b}(\bm{X}),\norm{\bm{X}}_{\rm F}) \text{ for all $\bm{X} \in \calS^n$}. \label{eq:eb_general}
\end{equation}
\end{theorem}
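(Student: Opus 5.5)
We argue by induction on $i$, using the one-step facial residual functions of Proposition~\ref{prop:1FRF} to pass from $\calF_i$ to $\calF_{i+1}$. Fix $\bm{X}$ and write $\epsilon \coloneqq \calE_{\rm b}(\bm{X})$ and $t \coloneqq \norm{\bm{X}}_{\rm F}$.

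\textbf{Base case $i=0$.} Here $\calF_1 = \calS_+^n$, so $\dist(\bm{X},\calF_1) \le \epsilon = \phi_0(\epsilon,t)$.

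\textbf{Case $i=1$.} Since $\bm{X} \in \calS^n = \setspan\calF_1$, the proposition applies directly. We have $\dist(\bm{X},\calF_1) \le \epsilon$. Because $\bm{Z}_1 \in \calV^\perp$ and $\norm{\bm{Z}_1}_{\rm F}=1$, choosing $\bm{Y} = P_\calV(\bm{X})$ gives
\[
\langle \bm{Z}_1,\bm{X}\rangle = \langle \bm{Z}_1,\bm{X}-\bm{Y}\rangle + \langle \bm{Z}_1,\bm{Y}\rangle .
\]
I need $\langle\bm{Z}_1,\bm{Y}\rangle$ to be a constant $c$ with $c \le 0$. Since $\calV$ meets $\calS_+^n \subseteq \calF_1$ and $\bm{Z}_1 \in \calF_1^*$, evaluating at a feasible point shows $c \ge 0$. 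Since $\calV\cap\calS_+^n = \calV\cap\calF_2 \subseteq \{\bm{Z}_1\}^\perp$, we get $c = 0$. Hence $\langle\bm{Z}_1,\bm{X}\rangle \le \dist(\bm{X},\calV) \le \epsilon$, and Proposition~\ref{prop:1FRF} yields $\dist(\bm{X},\calF_2) \le \widehat{\psi}_1(\epsilon,t)$.

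\textbf{Inductive step $i \ge 2$.} Assume $\dist(\bm{X},\calF_i) \le \phi_{i-1}(\epsilon,t)$. Proposition~\ref{prop:1FRF} only applies to points of $\setspan\calF_i$, so I project: set $\bm{W} \coloneqq P_{\setspan\calF_i}(\bm{X})$.

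\emph{Main obstacle: controlling the three hypotheses at $\bm{W}$.}
\begin{itemize}
\item \emph{Norm.} We have $\norm{\bm{W}}_{\rm F} \le t$ by nonexpansiveness.
\item \emph{Distance to the face.} Since $\calF_i \subseteq \setspan\calF_i$, we have $\norm{\bm{X}-\bm{W}}_{\rm F} \le \dist(\bm{X},\calF_i) \le \phi_{i-1}$, so $\dist(\bm{W},\calF_i) \le \dist(\bm{X},\calF_i) + \norm{\bm{X}-\bm{W}}_{\rm F} \le 2\phi_{i-1}$.
\item \emph{Inner product.} As in the case $i=1$, $\langle\bm{Z}_i,\bm{Y}\rangle=0$ for $\bm{Y}\in\calV$, using $\bm{Z}_i\in\calF_i^*\cap\calV^\perp$ and $\calV\cap\calS_+^n=\calV\cap\calF_{i+1}$. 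Hence $\langle \bm{Z}_i,\bm{W}\rangle \le \dist(\bm{X},\calV) + \norm{\bm{X}-\bm{W}}_{\rm F} \le \epsilon + \phi_{i-1}$.
\end{itemize}
Taking $\epsilon' \coloneqq 2(\epsilon+\phi_{i-1}(\epsilon,t))$ as a common bound for the two hypotheses, the proposition gives
\[
\dist(\bm{W},\calF_{i+1}) \le \alpha_i\epsilon' + \beta_i\sqrt{\epsilon' t}.
\]
Adding $\norm{\bm{X}-\bm{W}}_{\rm F} \le \phi_{i-1} \le \epsilon+\phi_{i-1}$ gives
\[
\dist(\bm{X},\calF_{i+1}) \le (2\alpha_i+1)(\epsilon+\phi_{i-1}) + \beta_i\sqrt{2(\epsilon+\phi_{i-1})t} = \widehat{\psi}_i(\epsilon+\phi_{i-1},t) = \phi_i(\epsilon,t),
\]
which matches the definition of $\widehat{\psi}_i$ exactly. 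This confirms the choice of $\epsilon'$. Lemma~\ref{lem:lb_phi_i} is available if it is convenient to replace $\epsilon$ by $\phi_{i-1}$ in such bounds.

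\textbf{Final bound~\eqref{eq:eb_general}.} Apply Theorem~\ref{thm:kappaB}:
\[
\dist(\bm{X},\calV\cap\calS_+^n) = \dist(\bm{X},\calV\cap\calF_{d^*+1}) \le \kappa(t)\max\{\dist(\bm{X},\calV),\dist(\bm{X},\calF_{d^*+1})\}.
\]
Here $\dist(\bm{X},\calV) \le \epsilon \le \phi_{d^*}(\epsilon,t)$ by Lemma~\ref{lem:lb_phi_i}, and $\dist(\bm{X},\calF_{d^*+1}) \le \phi_{d^*}(\epsilon,t)$ by \eqref{eq:dist_Fl_bound}. So the maximum is at most $\phi_{d^*}(\epsilon,t)$, which gives~\eqref{eq:eb_general}.
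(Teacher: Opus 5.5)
Your proof is correct and follows essentially the same route as the paper: project onto $\setspan\calF_i$, apply the one-step facial residual function of Proposition~\ref{prop:1FRF} with a common bound for the distance and inner-product hypotheses, add back $\dist(\bm{X},\setspan\calF_i)$, and conclude with Theorem~\ref{thm:kappaB} and Lemma~\ref{lem:lb_phi_i}. The differences are cosmetic. The paper reads $\calV\subseteq\{\bm{Z}_i\}^\perp$ directly off $\bm{Z}_i\in\calV^\perp$, so your detour through a feasible point is unnecessary. It also carries $\dist(\bm{X},\calF_k)$ through the estimate with the common bound $2\dist(\bm{X},\calF_k)+\calE_{\rm b}(\bm{X})$, and invokes the inductive hypothesis only at the end via monotonicity of $\widehat{\psi}_k$.
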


\begin{proof}
First, we prove \eqref{eq:dist_Fl_bound} by  induction on $i\ge 0$.
Firstly, it follows from the definition of $\phi_0$ that
\begin{equation}
\dist(\bm{X},\calF_1) = \dist(\bm{X},\calS_+^n) \le \calE_{\rm b}(\bm{X}) = \phi_0(\calE_{\rm b}(\bm{X}),\norm{\bm{X}}_{\rm F}) \label{eq:dist_X_to_F1_ub}
\end{equation}
holds for all $\bm{X} \in \calS^n$, so the inequality in \eqref{eq:dist_Fl_bound} holds for $i = 0$.
Secondly, we consider the case $i = 1$.
It follows from $\bm{Z}_1 \in \calV^\perp$ that $\calV \subseteq \{\bm{Z}_1\}^\perp$.
Then we have
\begin{equation}
\langle\bm{Z}_1,\bm{X}\rangle \le \abs{\langle\bm{Z}_1,\bm{X}\rangle} = \dist(\bm{X},\{\bm{Z}_1\}^\perp) \overset{\scriptsize \text{(a)}}\le \dist(\bm{X},\calV) \overset{\scriptsize \text{(b)}}\le \calE_{\rm b}(\bm{X}), \label{eq:ip_X_Z1_ub}
\end{equation}
where we use $\calV \subseteq \{\bm{Z}_1\}^\perp$ to derive (a) and use the definition of $\calE_{\rm b}(\bm{X})$ to derive (b).
Combining \eqref{eq:dist_X_to_F1_ub} and \eqref{eq:ip_X_Z1_ub} with the one-step facial residual function for $\calF_1$ and $\bm{Z}_1$, we have
\begin{equation*}
\dist(\bm{X},\calF_2) \le \alpha_1 \calE_{\rm b}(\bm{X}) + \beta_1 \sqrt{\calE_{\rm b}(\bm{X}) \norm{\bm{X}}_{\rm F}} = \widehat{\psi}_1(\calE_{\rm b}(\bm{X}),\norm{\bm{X}}_{\rm F}) = \phi_1(\calE_{\rm b}(\bm{X}),\norm{\bm{X}}_{\rm F}).
\end{equation*}
Therefore, the inequality in \eqref{eq:dist_Fl_bound} holds for $i = 1$.

Now, we assume that the inequality in \eqref{eq:dist_Fl_bound} holds for $i = k-1 \in \{1,\dots,d^*-1\}$.
Let $\bm{X} \in \calS^n$.
Then we have
\begin{equation}
\dist(P_{\setspan\calF_k}(\bm{X}),\calF_k) \le \norm{P_{\setspan\calF_k}(\bm{X}) - P_{\calF_k}(\bm{X})}_{\rm F} \le \dist(\bm{X},\setspan\calF_k) + \dist(\bm{X},\calF_k) \le 2\dist(\bm{X},\calF_k) + \calE_{\rm b}(\bm{X}), \label{eq:dist_PspanFiX_Fi}
\end{equation}
where we use $P_{\calF_k}(\bm{X}) \in \calF_k$ to derive the first inequality, use the triangle inequality to derive the second inequality, and use $\calF_k \subseteq \setspan\calF_k$ to derive the third inequality.
Moreover, we have
\begin{equation}
\dist(P_{\setspan\calF_k}(\bm{X}),\calV) \le \norm{P_{\setspan\calF_k}(\bm{X}) - P_{\calV}(\bm{X})}_{\rm F} \le \dist(\bm{X},\setspan\calF_k) + \dist(\bm{X},\calV) \le 2\dist(\bm{X},\calF_k) + \calE_{\rm b}(\bm{X}), \label{eq:dist_PspanFiX_V}
\end{equation}
where we use $P_{\calV}(\bm{X}) \in \calV$ to derive the first inequality, use the triangle inequality to derive the second inequality, and use $\calF_k \subseteq \setspan\calF_k$ and the definition of $\calE_{\rm b}(\bm{X})$ to derive the third inequality.
By \eqref{eq:dist_PspanFiX_V} and $\bm{Z}_k \in \calV^\perp$, in a manner similar to that used in \eqref{eq:ip_X_Z1_ub}, it follows that
\begin{equation}
\langle \bm{Z}_k, P_{\setspan\calF_k}(\bm{X})\rangle \le \dist(P_{\setspan\calF_k}(\bm{X}),\{\bm{Z}_k\}^\perp)  \le 2\dist(\bm{X},\calF_k) + \calE_{\rm b}(\bm{X}). \label{eq:ip_PspanFiX_Zi}
\end{equation}
Combining \eqref{eq:dist_PspanFiX_Fi} and \eqref{eq:ip_PspanFiX_Zi} with the one-step facial residual function for $\calF_k$ and $\bm{Z}_k$, we have
\begin{align}
\dist(P_{\setspan\calF_k}(\bm{X}),\calF_{k+1}) &\le \alpha_k(2\dist(\bm{X},\calF_k) + \calE_{\rm b}(\bm{X})) + \beta_k \sqrt{(2\dist(\bm{X},\calF_k) + \calE_{\rm b}(\bm{X}))\norm{P_{\setspan\calF_k}(\bm{X})}_{\rm F}} \nonumber \\
&\le \alpha_k(2\dist(\bm{X},\calF_k) + \calE_{\rm b}(\bm{X})) + \beta_k \sqrt{(2\dist(\bm{X},\calF_k) + \calE_{\rm b}(\bm{X}))\norm{\bm{X}}_{\rm F}}, \label{eq:dist_PspanFiX_Fi+1}
\end{align}
where we use the fact that the projection to the subspace $\setspan\calF_k$ is nonexpansive to derive the second inequality.
Then it follows that
\begin{align*}
\dist(\bm{X},\calF_{k+1}) &\overset{\scriptsize \text{(a)}}\le \norm{\bm{X} - P_{\calF_{k+1}}(P_{\setspan\calF_k}(\bm{X}))}_{\rm F} \\
&\overset{\scriptsize \text{(b)}}\le \dist(\bm{X},\setspan\calF_k) + \dist(P_{\setspan\calF_k}(\bm{X}),\calF_{k+1})\\
&\overset{\scriptsize \text{(c)}}\le \dist(\bm{X},\calF_k) + \alpha_k(2\dist(\bm{X},\calF_k) + \calE_{\rm b}(\bm{X})) + \beta_k\sqrt{(2\dist(\bm{X},\calF_k) + \calE_{\rm b}(\bm{X}))\norm{\bm{X}}_{\rm F}}\\
&\le (2\alpha_k + 1)(\dist(\bm{X},\calF_k) + \calE_{\rm b}(\bm{X})) + \beta_k\sqrt{2(\dist(\bm{X},\calF_k) + \calE_{\rm b}(\bm{X}))\norm{\bm{X}}_{\rm F}} \\
&\overset{\scriptsize \text{(d)}}= \widehat{\psi}_k(\dist(\bm{X},\calF_k) + \calE_{\rm b}(\bm{X}),\norm{\bm{X}}_{\rm F})\\
&\overset{\scriptsize \text{(e)}}\le \widehat{\psi}_k(\calE_{\rm b}(\bm{X}) + \phi_{k-1}(\calE_{\rm b}(\bm{X}),\norm{\bm{X}}_{\rm F}),\norm{\bm{X}}_{\rm F})\\
&\overset{\scriptsize \text{(f)}}= \phi_k(\calE_{\rm b}(\bm{X}),\norm{\bm{X}}_{\rm F}),
\end{align*}
where we use $P_{\calF_{k+1}}(P_{\setspan\calF_k}(\bm{X})) \in \calF_{k+1}$ to derive (a), use the triangle inequality to derive (b), use $\calF_k \subseteq \setspan \calF_k$ and \eqref{eq:dist_PspanFiX_Fi+1} to derive (c), use the definition of $\widehat{\psi}_k$ to derive (d), use the inductive hypothesis to derive (e), and use the definition of $\phi_k$ to derive (f).
Therefore, the inequality in \eqref{eq:dist_Fl_bound} also holds for $i = k$.

Next, we prove \eqref{eq:eb_general}.
It follows that
\begin{align*}
\dist(\bm{X},\calV \cap\calS_+^n)   &\overset{\scriptsize \text{(a)}}= \dist(\bm{X},\calV \cap\calF_{d^*+1}) \\
& \overset{\scriptsize \text{(b)}}\le \kappa(\norm{\bm{X}}_{\rm F})\max\{\dist(\bm{X},\calV),\dist(\bm{X},\calF_{d^*+1})\} \\
&\overset{\scriptsize \text{(c)}}\le \kappa(\norm{\bm{X}}_{\rm F})\max\{\calE_{\rm b}(\bm{X}),\phi_{d^*}(\calE_{\rm b}(\bm{X}),\lVert\bm{X}\rVert_{\rm F})\}\\
&\overset{\scriptsize \text{(d)}}\le \kappa(\norm{\bm{X}}_{\rm F})\phi_{d^*}(\calE_{\rm b}(\bm{X}),\lVert\bm{X}\rVert_{\rm F}),
\end{align*}
where (a) holds because $\calV \cap\calS_+^n = \calV \cap\calF_{d^*+1}$, (b) follows from Theorem~\ref{thm:kappaB}, (c) results from \eqref{eq:dist_Fl_bound} and $\dist(\bm{X},\calV) \le \calE_{\rm b}(\bm{X})$, and (d) is a consequence of Lemma~\ref{lem:lb_phi_i}.
\end{proof}

In what follows, we bound the function $\phi_{d^*}(s,t)$ by explicit power functions, thereby deriving  a qualitative radial-type H\"{o}lder error bound for the problem $\Feas(\calV,\calS_+^n)$.
When $d^* = 0$, since $\phi_0(s,t) = s$ by definition, see \eqref{eq:0416_phi}, the error bound in \eqref{eq:eb_general} reduces to
\begin{equation}
\dist(\bm{X},\calV\cap\calS_+^n)\le \kappa(\norm{\bm{X}}_{\rm F})\calE_{\rm b}(\bm{X}) \text{ for all $\bm{X}\in \calS^n$}, \label{eq:eb_dPPS=0}
\end{equation}
which is a Lipschitz error bound.

When $d^* \ge 1$, the radial-type error bound in \eqref{eq:eb_general} is described in terms of the function $\phi_{d^*}$, which is constructed recursively by using the functions $\widehat{\psi}_1,\dots,\widehat{\psi}_{d^*}$.
In what follows, we derive upper bounds of the function $\phi_{i}(s,t)$ for every $i=1,\ldots,d^*$ by simple bivariate power functions with respect to $s$ and $t$. 
To do this, for $i = 1,\dots,d^*$, we let
\begin{equation*}
A_i \coloneqq \begin{cases}
\alpha_1 & (i = 1),\\
2\alpha_i + 1 &  (i\ge 2)
\end{cases} \text{ and }  B_i \coloneqq \begin{cases}
\beta_1 & (i = 1),\\
\sqrt{2}\beta_i & (i \ge 2).
\end{cases}
\end{equation*}

\begin{lemma}\label{lem:ub_hi}
Let $d^* \ge 1$.
For $i = 1,\dots,d^*$ and $j = 0,\dots,i$, we define $c_{i,j}$ recursively as 
\begin{equation*}
\left\{ \begin{aligned}
c_{1,0} &\coloneqq A_1 = \alpha_1, &&\\
c_{1,1} &\coloneqq B_1 = \beta_1, &&\\
c_{i,0} &\coloneqq A_i(1 + c_{i-1,0}) && (i\ge 2),\\
c_{i,1} &\coloneqq A_i c_{i-1,1} + B_i(1 + \sqrt{c_{i-1,0}}) && (i\ge 2),\\
c_{i,j} &\coloneqq A_ic_{i-1,j} + B_i\sqrt{c_{i-1,j-1}} && (i\ge 2,\ 2\le j\le i-1),\\
c_{i,i} &\coloneqq B_i\sqrt{c_{i-1,i-1}} && (i\ge 2).
\end{aligned}\right.
\end{equation*}
Then, for every $i = 1,\dots,d^*$, we have
\begin{equation}
\phi_i(s,t) \le \sum_{j=0}^i c_{i,j} s^{\frac{1}{2^j}}t^{1-\frac{1}{2^j}}. \label{eq:hi_ub}
\end{equation}
\end{lemma}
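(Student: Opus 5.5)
The proof will go by induction on $i$, using the recursion $\phi_i(s,t) = A_i(s+\phi_{i-1}(s,t)) + B_i\sqrt{(s+\phi_{i-1}(s,t))t}$ for $i\ge 2$. This identity is exactly \eqref{eq:0416_phi} rewritten with $A_i = 2\alpha_i+1$ and $B_i=\sqrt{2}\beta_i$. For the base case $i=1$ we have $\phi_1(s,t) = \alpha_1 s + \beta_1\sqrt{st} = c_{1,0}s + c_{1,1}s^{1/2}t^{1/2}$, so \eqref{eq:hi_ub} holds with equality. The induction step uses only two elementary facts: the map $\widehat{\psi}_i(\cdot,t)$ is nondecreasing, and $\sqrt{\sum_j a_j} \le \sum_j \sqrt{a_j}$ for nonnegative $a_j$.

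For the inductive step, assume \eqref{eq:hi_ub} holds for $i-1$. By monotonicity of $\widehat{\psi}_i$ in its first argument,
\[
s+\phi_{i-1}(s,t) \le (1+c_{i-1,0})s + \sum_{j=1}^{i-1} c_{i-1,j}s^{2^{-j}}t^{1-2^{-j}} .
\]
We now bound the two terms of $\widehat{\psi}_i$ separately.

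The linear term contributes $A_i(1+c_{i-1,0})s$, which is $c_{i,0}s$, together with $A_i c_{i-1,j}s^{2^{-j}}t^{1-2^{-j}}$ for $1\le j\le i-1$.

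For the square-root term we apply subadditivity of the square root:
\[
\sqrt{\Bigl((1+c_{i-1,0})s + \textstyle\sum_{j\ge1} c_{i-1,j}s^{2^{-j}}t^{1-2^{-j}}\Bigr)t} \le \sqrt{1+c_{i-1,0}}\,s^{1/2}t^{1/2} + \sum_{j=1}^{i-1}\sqrt{c_{i-1,j}}\,s^{2^{-(j+1)}}t^{1-2^{-(j+1)}} .
\]
The exponent bookkeeping here is $\bigl(s^{2^{-j}}t^{1-2^{-j}}t\bigr)^{1/2} = s^{2^{-(j+1)}}t^{1-2^{-(j+1)}}$. We also use $\sqrt{1+c_{i-1,0}} \le 1+\sqrt{c_{i-1,0}}$, which is again subadditivity.

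Collecting coefficients by the exponent index $j$:
\begin{itemize}
\item for $j=1$ we get $A_ic_{i-1,1} + B_i(1+\sqrt{c_{i-1,0}})$;
\item for $2\le j\le i-1$ we get $A_ic_{i-1,j} + B_i\sqrt{c_{i-1,j-1}}$;
\item for $j=i$ only the shifted square-root term appears, giving $B_i\sqrt{c_{i-1,i-1}}$.
\end{itemize}
These are exactly the coefficients $c_{i,j}$ defined in the statement, so \eqref{eq:hi_ub} holds for $i$.

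The only delicate point is the index bookkeeping in the reindexing $j\mapsto j+1$. We must check that the case $i=2$ is consistent: the middle range $2\le j\le i-1$ is then empty, and the formula for $c_{2,2}$ applies. We must also handle the degenerate case $t=0$, or when some coefficients vanish, but these cause no trouble since all the inequalities used hold for nonnegative reals.
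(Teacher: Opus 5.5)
Your proof is correct and follows the paper's argument: induction on $i$, with the inductive hypothesis substituted into $\phi_i(s,t)=A_i(s+\phi_{i-1}(s,t))+B_i\sqrt{(s+\phi_{i-1}(s,t))t}$, subadditivity of the square root, and collection of coefficients by exponent. The only cosmetic difference is that you group $(1+c_{i-1,0})s$ before taking the root and then use $\sqrt{1+c_{i-1,0}}\le 1+\sqrt{c_{i-1,0}}$, whereas the paper splits $s$ and $c_{i-1,0}s$ under the root directly.
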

\begin{proof}
We note that $\widehat{\psi}_i(s,t) = A_i s + B_i \sqrt{st}$ holds for every $i = 1,\dots,d^*$ by the definitions of $A_i$ and $B_i$.
We prove \eqref{eq:hi_ub} by induction on $i\ge 1$.
When $i = 1$, both the left- and right-hand sides of \eqref{eq:hi_ub} are $\alpha_1 s + \beta_1 \sqrt{st}$, so \eqref{eq:hi_ub} holds.
Next, we assume that \eqref{eq:hi_ub} holds for $i = k-1\ge 1$.
For $i = k$, we have
\begin{align*}
\phi_k(s,t) &= A_k(s + \phi_{k-1}(s,t)) + B_k\sqrt{(s + \phi_{k-1}(s,t))t} \\
&\overset{\scriptsize \text{(a)}}\le A_k\Biggl(s + \sum_{j=0}^{k-1}c_{k-1,j}s^{\frac{1}{2^j}}t^{1-\frac{1}{2^j}}\Biggr) + B_k\sqrt{\Biggl(s + \sum_{j=0}^{k-1}c_{k-1,j}s^{\frac{1}{2^j}}t^{1-\frac{1}{2^j}}\Biggr)t} \\
&\overset{\scriptsize \text{(b)}}\le A_k  s + \sum_{j=0}^{k-1}A_kc_{k-1,j}s^{\frac{1}{2^j}}t^{1-\frac{1}{2^j}} + B_k \sqrt{st} + \sum_{j=0}^{k-1}B_k\sqrt{c_{k-1,j}}s^{\frac{1}{2^{j+1}}}t^{1-\frac{1}{2^{j+1}}} \\
&= A_k(1 + c_{k-1,0})s + (A_kc_{k-1,1} + B_k(1 + \sqrt{c_{k-1,0}}))\sqrt{st} \\
&\quad + \sum_{j=2}^{k-1}(A_kc_{k-1,j} + B_k\sqrt{c_{k-1,j-1}})s^{\frac{1}{2^j}}t^{1-\frac{1}{2^j}} + B_k\sqrt{c_{k-1,k-1}}s^{\frac{1}{2^k}}t^{1-\frac{1}{2^k}}\\
&\overset{\scriptsize \text{(c)}}= \sum_{j=0}^k c_{k,j} s^{\frac{1}{2^j}}t^{1-\frac{1}{2^j}},
\end{align*}
where (a) is a consequence of the inductive hypothesis, (b) holds by the subadditivity of the square root function, and (c) follows from the definition of $c_{k,j}$.
Thus, \eqref{eq:hi_ub} also holds for $i = k$.
\end{proof}

In the following lemma, we provide an explicit upper bound for the constants $c_{i,j}$ in \eqref{eq:hi_ub} using both $\alpha_1,\dots,\alpha_i$ and $\beta_1,\dots,\beta_i$.

\begin{lemma}\label{lem:ub_cij}
Let $d^* \ge 1$.
For each $i = 1,\dots,d^*$, 
we define
\begin{equation*}
\alpha^{(i)} \coloneqq \max_{1\le k\le i}\alpha_k,\ \beta^{(i)} \coloneqq \max_{1\le k\le i}\beta_k.
\end{equation*}
Then, for $i = 1,\dots,d^*$ and $j = 0,\dots,i$, we have
\begin{equation}
c_{i,j} \le 6^{i-1}(\alpha^{(i)})^{i-j}(\beta^{(i)})^{2-\frac{1}{2^{j-1}}}.\label{eq:cij_ub}
\end{equation}
\end{lemma}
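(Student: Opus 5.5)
We prove \eqref{eq:cij_ub} by induction on $i$, using only the recursion of Lemma~\ref{lem:ub_hi} and three elementary facts about the constants. Write $a \coloneqq \alpha^{(i)}$, $b \coloneqq \beta^{(i)}$ and $e_j \coloneqq 2 - \frac{1}{2^{j-1}}$, so the target is $c_{i,j} \le 6^{i-1} a^{i-j} b^{e_j}$. Note $e_0 = 0$, $e_1 = 1$, and $e_j \in [0,2)$ for all $j \ge 0$.

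The three facts are as follows.
\begin{enumerate}[label=(\alph*)]
\item By Remark~\ref{rem:alpha_beta}, $\alpha_k \ge \max\{\beta_k,1\}$ for every $k$, hence $a \ge \max\{b,1\}$.
\item Since $a \ge 1$, we have $A_k \le 2a+1 \le 3a$ for every $k \le i$.
\item $B_k \le \sqrt{2}\, b$ for every $k \le i$.
\end{enumerate}
Moreover $\alpha^{(i-1)} \le a$ and $\beta^{(i-1)} \le b$. Because every exponent $i-1-j$ and $e_j$ appearing is nonnegative, the inductive hypothesis therefore gives $c_{i-1,j} \le 6^{i-2} a^{i-1-j} b^{e_j}$. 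We use the convention $b^0 = 1$, which covers the case $b = 0$ arising when $d^* = 1$, $r_2 = 0$.

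\textbf{Base case $i=1$.} This is immediate: $c_{1,0} = \alpha_1 \le a = 6^0 a^1 b^0$ and $c_{1,1} = \beta_1 \le b = 6^0 a^0 b^1$.

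\textbf{Inductive step $i \ge 2$.} We check the four cases of the recursion, each time splitting the bound into two halves of size at most $3\cdot 6^{i-2}(\cdots)$, whose sum is $6^{i-1}(\cdots)$.
\begin{itemize}
\item Case $j = 0$. Using $1 \le 6^{i-2} a^{i-1}$,
\[
c_{i,0} \le 3a\bigl(1 + 6^{i-2} a^{i-1}\bigr) \le 6 \cdot 6^{i-2} a^{i} = 6^{i-1} a^{i}.
\]
\item Case $j = 1$. The first term satisfies $A_i c_{i-1,1} \le 3 \cdot 6^{i-2} a^{i-1} b$. For the second term,
\[
B_i\bigl(1 + \sqrt{c_{i-1,0}}\bigr) \le \sqrt{2}\, b\bigl(1 + 6^{(i-2)/2} a^{(i-1)/2}\bigr) \le 2\sqrt{2}\, 6^{i-2} a^{i-1} b \le 3 \cdot 6^{i-2} a^{i-1} b .
\]
\item Case $2 \le j \le i-1$. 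The first term satisfies $A_i c_{i-1,j} \le 3 \cdot 6^{i-2} a^{i-j} b^{e_j}$. For the second term, the exponents of $b$ combine exactly, since $1 + \frac12 e_{j-1} = 2 - \frac{1}{2^{j-1}} = e_j$:
\[
B_i \sqrt{c_{i-1,j-1}} \le \sqrt{2}\, b \cdot 6^{(i-2)/2} a^{(i-j)/2} b^{\frac12 e_{j-1}} = \sqrt{2}\, 6^{(i-2)/2} a^{(i-j)/2} b^{e_j}.
\]
Since $a \ge 1$, $6^{(i-2)/2} \le 6^{i-2}$ and $\sqrt{2} \le 3$, this is at most $3 \cdot 6^{i-2} a^{i-j} b^{e_j}$.
\item Case $j = i$. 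In the same way,
\[
c_{i,i} \le \sqrt{2}\, 6^{(i-2)/2} b^{e_i} \le 6^{i-1} a^{0} b^{e_i}.
\]
\end{itemize}

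The only delicate point is the bookkeeping of the $b$-exponents through the square root, namely the identity $1 + \tfrac12 e_{j-1} = e_j$, which is exactly what the exponent $2 - 1/2^{j-1}$ is designed for. The remaining inequalities only need $a \ge 1$ from Remark~\ref{rem:alpha_beta} and crude numerical constants, which the factor $6$ per step absorbs.
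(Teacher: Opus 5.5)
Your proof is correct and follows the paper's route: induction on $i$ with the same four-case split ($j=0$, $j=1$, $2\le j\le i-1$, $j=i$), the bounds $A_k \le 3\alpha^{(i)}$ and $B_k \le \sqrt{2}\,\beta^{(i)}$, and the exponent identity $1 + \frac{1}{2}\bigl(2-\frac{1}{2^{j-2}}\bigr) = 2 - \frac{1}{2^{j-1}}$. The only difference is minor: you absorb the additive $1$'s using $6^{i-2}(\alpha^{(i)})^{i-1} \ge 1$, which needs only $\alpha_k \ge 1$, whereas the paper first argues $\alpha_k,\beta_k \ge 1$ for $d^*\ge 2$ and then uses $c_{k-1,j} \ge 1$.
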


\begin{proof}
For simplicity, let $p_j \coloneqq 2-\frac{1}{2^{j-1}}$.
We prove that \eqref{eq:cij_ub} holds for any $j = 0,\dots,i$ by induction on $i \ge 1$.
For $i = 1$, we have
\begin{align*}
c_{1,0} &= \alpha_1 = 6^{1-1}(\alpha^{(1)})^{1-0}(\beta^{(1)})^{p_0},\\
c_{1,1} &= \beta_1 = 6^{1-1}(\alpha^{(1)})^{1-1}(\beta^{(1)})^{p_1},
\end{align*}
so \eqref{eq:cij_ub} holds for $i = 1$ and $j = 0,1$.

In what follows, we suppose that $d^* \ge 2$, because the proof finishes at the previous paragraph when $d^* = 1$.
Under the inductive hypothesis that \eqref{eq:cij_ub} holds for $i = k-1 \ge 1$  and $j = 0,\dots,k-1$, we shall prove that \eqref{eq:cij_ub} holds for $i = k$ and  $j = 0,\dots,k$.
The discussion in Remark~\ref{rem:alpha_beta} tells us that if  $d^* \ge 2$ holds, then
$\calV \cap \calS_+^n \neq \{\bm{O}\}$ holds as well. Furthermore, in this case, $\alpha_i,\beta_i \ge 1$ hold for all $i = 1,\dots,d^*$.
By the definitions of $A_k$ and $B_k$, we have
\begin{equation}
A_k \le 3\alpha_k,\ B_k \le \sqrt{2}\beta_k. \label{eq:ub_Ai_Bi}
\end{equation}
Combining $\alpha_i,\beta_i \ge 1$ with the recursive definition of $c_{k-1,j}$, we see that
\begin{equation}
c_{k-1,j} \ge 1 \label{eq:cij_ge_1}
\end{equation}
for any $j = 0,\dots,k-1$.

Firstly, we have
\begin{equation*}
c_{k,0} = A_k(1 + c_{k-1,0}) \le 6\alpha_k c_{k-1,0} \le 6^{k-1}\alpha_k(\alpha^{(k-1)})^{k-1}(\beta^{(k-1)})^{p_0} \le 6^{k-1}(\alpha^{(k)})^k (\beta^{(k)})^{p_0},
\end{equation*}
where we use \eqref{eq:ub_Ai_Bi} and \eqref{eq:cij_ge_1} to derive the first inequality, use the inductive hypothesis to derive the second inequality, and use $\alpha_k,\alpha^{(k-1)} \le \alpha^{(k)}$ to derive the third inequality.
Secondly, we have
\begin{align*}
c_{k,1} &= A_k c_{k-1,1} + B_k(1 + \sqrt{c_{k-1,0}})\\
&\le 3\alpha_k c_{k-1,1} + 2\sqrt{2} \beta_k\sqrt{c_{k-1,0}}\\
&\le 3\cdot 6^{k-2} \alpha_k(\alpha^{(k-1)})^{k-2}(\beta^{(k-1)})^{p_1} + 2\sqrt{2}\beta_k\sqrt{6^{k-2}(\alpha^{(k-1)})^{k-1}(\beta^{(k-1)})^{p_0}}\\
&\le (3 + 2\sqrt{2})6^{k-2}(\alpha^{(k)})^{k-1}(\beta^{(k)})^{p_1}\\
&\le 6^{k-1}(\alpha^{(k)})^{k-1}(\beta^{(k)})^{p_1},
\end{align*}
where {the first inequality holds by  \eqref{eq:ub_Ai_Bi} and \eqref{eq:cij_ge_1}, the second inequality follows by the inductive hypothesis, and the third inequality holds because $\alpha_k \le \alpha^{(k)}$,  $\beta_k\le \beta^{(k)}$, $(\alpha^{(i)})_i$ and $(\beta^{(i)})_i$ are nondecreasing sequences.}
Thirdly, for every $j = 2,\dots,i-1$, we have
\begin{align*}
c_{k,j} &=  A_k c_{k-1,j} + B_k\sqrt{c_{k-1,j-1}} \\
&\le 3\alpha_k c_{k-1,j} + \sqrt{2}\beta_k\sqrt{c_{k-1,j-1}}\\
&\le 3\cdot 6^{k-2}\alpha_k (\alpha^{(k-1)})^{k-1-j}(\beta^{(k-1)})^{p_j} + \sqrt{2}\beta_k \sqrt{6^{k-2}(\alpha^{(k-1)})^{k-j}(\beta^{(k-1)})^{p_{j-1}}}\\
&\le (3 + \sqrt{2})6^{k-2}(\alpha^{(k)})^{k-j}(\beta^{(k)})^{p_j}\\
&\le 6^{k-1}(\alpha^{(k)})^{k-j}(\beta^{(k)})^{p_j},
\end{align*}
where we use $\alpha_k,\alpha^{(k-1)} \le \alpha^{(k)}$, $\beta_k,\beta^{(k-1)} \le \beta^{(k)}$, and $p_j = 1 + p_{j-1}/2$ to derive the third inequality.
Fourthly, we have
\begin{equation*}
c_{k,k} = B_k\sqrt{c_{k-1,k-1}} \le \sqrt{2}\beta_k\sqrt{6^{k-2}(\beta^{(k-1)})^{p_{k-1}}} \le 6^{k-1}(\beta^{(k)})^{p_k},
\end{equation*}
where we use $\beta_k,\beta^{(k-1)} \le \beta^{(k)}$ and $p_k = 1 + p_{k-1}/2$ to derive the final inequality.
Therefore, \eqref{eq:cij_ub} with $i = k$ also holds for any $j = 0,\dots,k$.
\end{proof}

Applying Lemmas~\ref{lem:ub_hi} and \ref{lem:ub_cij} to the error bound in \eqref{eq:eb_general}, we obtain a radial-type H\"{o}lder error bound of the form \eqref{eq:radial_Holder_eb} involving $\alpha_1,\dots,\alpha_{d^*}$ and $\beta_1,\dots,\beta_{d^*}$.

\begin{theorem}[\textbf{Radial-type error bound over the whole space}]\label{thm:eb_alpha_beta}
Let $d^* \ge 1$.
Then
\begin{equation*}
\dist(\bm{X},\calV\cap\calS_+^n) \le \kappa(\norm{\bm{X}}_{\rm F})6^{d^*-1} \sum_{j=0}^{d^*} (\alpha^{(d^*)})^{d^*-j}(\beta^{(d^*)})^{2-\frac{1}{2^{j-1}}} \calE_{\rm b}(\bm{X})^{\frac{1}{2^j}}\norm{\bm{X}}_{\rm F}^{1-\frac{1}{2^j}} \text{ for all $\bm{X}\in \calS^n$}.
\end{equation*}
In other words, a radial-type H\"{o}lder error bound in \eqref{eq:radial_Holder_eb} holds with 
\begin{equation*}
\mu_j(\rho) \coloneqq 6^{d^*-1} (\alpha^{(d^*)})^{d^*-j}(\beta^{(d^*)})^{2-\frac{1}{2^{j-1}}} \kappa(\rho) \, \rho^{1-\frac{1}{2^j}} \text{ and } \gamma_j=\frac{1}{2^j} \text{ for every $j=0,\ldots,\ell$, and } \ell=d^*.
\end{equation*}
\end{theorem}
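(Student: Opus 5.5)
The plan is to chain together the three ingredients already established: the general bound \eqref{eq:eb_general}, Lemma~\ref{lem:ub_hi}, and Lemma~\ref{lem:ub_cij}. Fix $\bm{X}\in\calS^n$ and put $s\coloneqq\calE_{\rm b}(\bm{X})$ and $t\coloneqq\norm{\bm{X}}_{\rm F}$. By \eqref{eq:eb_general},
\[
\dist(\bm{X},\calV\cap\calS_+^n)\le\kappa(t)\,\phi_{d^*}(s,t).
\]
Because $d^*\ge1$, Lemma~\ref{lem:ub_hi} applies with $i=d^*$ and gives
\[
\phi_{d^*}(s,t)\le\sum_{j=0}^{d^*}c_{d^*,j}\,s^{1/2^j}t^{1-1/2^j}.
\]
Next, Lemma~\ref{lem:ub_cij} with $i=d^*$ bounds each coefficient by
\[
c_{d^*,j}\le 6^{d^*-1}(\alpha^{(d^*)})^{d^*-j}(\beta^{(d^*)})^{2-1/2^{j-1}} .
\]
Since $\kappa(t)\ge0$ and every term $s^{1/2^j}t^{1-1/2^j}$ is nonnegative, these inequalities can be multiplied through and summed, which yields exactly the displayed inequality. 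The case $\bm{X}=\bm{O}$, where $t=0$, is consistent with this: for $j=0$ the factor is $t^0=1$, and the $j=0$ term alone dominates.

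For the ``in other words'' part, I will read off $\mu_j(\rho)=6^{d^*-1}(\alpha^{(d^*)})^{d^*-j}(\beta^{(d^*)})^{2-1/2^{j-1}}\kappa(\rho)\rho^{1-1/2^j}$ with $\gamma_j=2^{-j}$ and $\ell=d^*$. Three properties need checking.

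First, $\gamma_j\le\gamma$ with $\gamma=\gamma_0=1$, which is immediate.

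Second, each $\mu_j$ must be nonnegative and nondecreasing. The factor $\rho^{1-1/2^j}$ is nondecreasing because its exponent is nonnegative, and the constants in front are nonnegative. For the polyhedral branch of Theorem~\ref{thm:kappaB}, $\kappa$ is constant. For the other branch, $\kappa=\max\{\eta,1\}\theta$, and $\theta(\rho)$ in \eqref{eq:thetaB} is a supremum over the growing balls $\bbB(\bm{O},\rho)$, so it is nondecreasing. When both branches apply, \eqref{eq:kappa_min} takes the minimum of two nondecreasing functions, which is again nondecreasing.

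Third, the exponent $2-1/2^{j-1}$ equals $0$ at $j=0$, so $(\beta^{(d^*)})^0=1$ there, even if $\beta^{(d^*)}=0$ as in Remark~\ref{rem:alpha_beta}.

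I do not expect a real obstacle, since everything is routine bookkeeping on top of the earlier lemmas. The one point I will double-check is that Lemma~\ref{lem:ub_cij} was proved for all $j=0,\dots,i$ in the boundary cases $d^*=1$ and $d^*=2$. Its proof handled $j=0,1$, the middle range, and $j=k$ separately, so coverage looks complete.
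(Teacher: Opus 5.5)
Your proposal is correct and follows the paper's own route: combine \eqref{eq:eb_general} with Lemma~\ref{lem:ub_hi} at $i=d^*$, then bound each coefficient $c_{d^*,j}$ by Lemma~\ref{lem:ub_cij}. Your extra checks are sound bookkeeping that the paper leaves implicit: that $\kappa$ (and hence each $\mu_j$) is nondecreasing, and that the $\beta$-exponent vanishes at $j=0$.
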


Next, we show that the function $\phi_i$ can be alternatively bounded by another simpler bivariate power function with fractional exponents, up to an explicit constant $C_i$ defined recursively.

\begin{lemma}\label{lem:0416_phi_bound}
Let $d^* \ge 1$.
For each $i = 1,\dots,d^*$, we define
\begin{equation*}
C_i \coloneqq \begin{cases}
\alpha_1 & (i = 1),\\
2(C_{i-1} + 1)(2\alpha_i + 1) & (i \ge 2).
\end{cases}
\end{equation*}
Then we have $1 \le C_i \le 12^{i-1}\alpha_1\cdots \alpha_i$ and
\begin{equation}
\phi_i(s,t) \le C_i\sum_{j=0}^i s^{\frac{1}{2^j}}t^{1-\frac{1}{2^j}}. \label{eq:ub_phii}
\end{equation}
\end{lemma}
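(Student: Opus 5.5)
Both claims are proved by induction on $i$, in parallel with Lemma~\ref{lem:ub_hi}; the bound on $C_i$ follows directly from the recursion. For the estimate on $C_i$: $C_1=\alpha_1\ge 1$ by Remark~\ref{rem:alpha_beta}. For $i\ge 2$ we get $C_i=2(C_{i-1}+1)(2\alpha_i+1)\ge 1$ trivially. Using $C_{i-1}\ge 1$ and $\alpha_i\ge 1$ gives $C_{i-1}+1\le 2C_{i-1}$ and $2\alpha_i+1\le 3\alpha_i$. Hence $C_i\le 12\,\alpha_i C_{i-1}\le 12^{i-1}\alpha_1\cdots\alpha_i$ by the inductive hypothesis.

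For \eqref{eq:ub_phii}, write $S_i(s,t)\coloneqq\sum_{j=0}^i s^{1/2^j}t^{1-1/2^j}$. The case $i=1$ uses $\beta_1\le\alpha_1$ (Remark~\ref{rem:alpha_beta}), so $\phi_1(s,t)=\alpha_1 s+\beta_1\sqrt{st}\le C_1 S_1(s,t)$. For the inductive step $i=k\ge 2$, the hypothesis together with $s\le S_{k-1}$ gives
\[
s+\phi_{k-1}(s,t)\le (C_{k-1}+1)S_{k-1}(s,t).
\]
Substituting into $\phi_k=(2\alpha_k+1)(s+\phi_{k-1})+\beta_k\sqrt{2(s+\phi_{k-1})t}$ yields two terms. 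The first is at most $(2\alpha_k+1)(C_{k-1}+1)S_{k-1}$. For the second, subadditivity of the square root gives $\sqrt{S_{k-1}t}\le \sum_{j=0}^{k-1}s^{1/2^{j+1}}t^{1-1/2^{j+1}}=S_k-s$. Together with $\sqrt{C_{k-1}+1}\le C_{k-1}+1$ and $\sqrt2\beta_k\le \sqrt2\alpha_k\le 2\alpha_k+1$, the second term is bounded by $(2\alpha_k+1)(C_{k-1}+1)(S_k-s)$. Adding the two pieces and using $S_{k-1}\le S_k$ gives
\[
\phi_k\le (2\alpha_k+1)(C_{k-1}+1)\bigl(S_{k-1}+S_k\bigr)\le 2(C_{k-1}+1)(2\alpha_k+1)S_k=C_kS_k.
\]

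The main obstacle is the case distinction needed for $\alpha_i\ge 1$ and $\beta_i\le\alpha_i$. Remark~\ref{rem:alpha_beta} gives $\alpha_i\ge\max\{\beta_i,1\}$ directly from \eqref{eq:def_alphai}–\eqref{eq:def_betai}: since $\gamma_i\ge 1$, we have $\alpha_i^2\ge 1+2(\gamma_i+1)r_{i+1}\ge 2(\gamma_i+1)\sqrt{r_{i+1}}=\beta_i^2$ when $r_{i+1}\ge 1$, and $\beta_i=0$ when $r_{i+1}=0$. So no case split on $d^*$ is actually needed. I would just verify this inequality explicitly rather than rely on $\beta_i\ge1$.
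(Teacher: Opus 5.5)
Your proof is correct and takes essentially the same route as the paper. The bound $1\le C_i\le 12^{i-1}\alpha_1\cdots\alpha_i$ comes from the same induction, and the inductive step for \eqref{eq:ub_phii} uses the same ingredients: the hypothesis, $\beta_k\le\alpha_k$, and subadditivity of the square root. Your $S_{k-1}+S_k\le 2S_k$ bookkeeping simply packages the paper's term-by-term coefficient comparison. You are also right that $\alpha_i\ge\max\{\beta_i,1\}$ holds unconditionally. This is exactly what Remark~\ref{rem:alpha_beta} states and what the paper uses here, so no case split on $d^*$ is needed.
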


\begin{proof}
First, we derive bounds for the constant $C_i$.
By Remark~\ref{rem:alpha_beta}, $\alpha_i \geq 1$ holds for $i = 1,\dots,d^*$.
Then, the definition of $C_i$ implies  $C_i \ge 1$ for $i = 1,\dots,d^*$.

We next prove $C_i \le 12^{i-1}\alpha_1\cdots \alpha_i$ by induction on $i \ge 1$.
When $i = 1$ both the left- and right-hand sides are $\alpha_1$, so the inequality holds.
We assume that the inequality holds for $i = k-1 \ge 1$.
For $i = k$, it follows from the definition of $C_k$ that
\begin{equation*}
C_k = 2(C_{k-1} + 1)(2\alpha_k + 1) \le 12C_{k-1}\alpha_k \le 12^{k-1}\alpha_1\cdots \alpha_k,
\end{equation*}
where we use $C_{k-1} \ge 1$ (and so, $C_{k-1} + 1 \le 2C_{k-1}$) and $\alpha_k \ge 1$ (and so, $2\alpha_k + 1 \le 3 \alpha_k)$ to derive the first inequality, and use the inductive hypothesis to derive the second inequality.
Therefore, $C_i \le 12^{i-1}\alpha_1\cdots \alpha_i$ holds for $i = k$.

Next, we show \eqref{eq:ub_phii} by induction on $i\ge 1$. 
When $i=1$, it follows from $\alpha_1 \ge \beta_1$ (Remark~\ref{rem:alpha_beta}) that
\begin{equation*}
\phi_1(s,t) = \widehat{\psi}_1(s,t) = \alpha_1 s + \beta_1\sqrt{st} \le C_1\sum_{j=0}^{1}s^{\frac{1}{2^j}}t^{1-\frac{1}{2^j}},
\end{equation*}
so \eqref{eq:ub_phii} holds.
We assume that \eqref{eq:ub_phii} holds for $i = k-1 \ge 1$.
Recalling \eqref{eq:0416_phi}, when $i = k$, we have
\begin{align*}
\phi_k(s,t) &= (2\alpha_k + 1)(s+\phi_{k-1}(s,t))
+ \beta_k\sqrt{2(s + \phi_{k-1}(s,t))t}\\
&\le (2\alpha_k+1)\Biggl((C_{k-1} + 1)s + C_{k-1}\sum_{j=1}^{k-1}s^{\frac{1}{2^j}}t^{1-\frac{1}{2^j}}\Biggr)
 + \alpha_k\sqrt{2\Biggl((C_{k-1} + 1)s + C_{k-1}\sum_{j=1}^{k-1}s^{\frac{1}{2^j}}t^{1-\frac{1}{2^j}}\Biggr)t}\\
&\le (C_{k-1} + 1)(2\alpha_k + 1)s + C_{k-1}(2\alpha_k + 1)\sum_{j=1}^{k-1}s^{\frac{1}{2^j}}t^{1-\frac{1}{2^j}}\\
&\quad + \sqrt{2(C_{k-1}+ 1)}\alpha_k\sqrt{st} + \sqrt{2C_{k-1}}\alpha_k\sum_{j=1}^{k-1}s^{\frac{1}{2^{j+1}}}t^{1-\frac{1}{2^{j+1}}}\\
&= (C_{k-1} + 1)(2\alpha_k + 1)s + \underbrace{(C_{k-1}(2\alpha_k + 1)}_{\leq (C_{k-1}+1)(2\alpha_k +1)} + \underbrace{\sqrt{2(C_{k-1} + 1)}\alpha_k}_{\leq (C_{k-1}+1)(2\alpha_k +1)})\sqrt{st}\\
&\quad + \underbrace{(C_{k-1}(2\alpha_k + 1)}_{\leq (C_{k-1}+1)(2\alpha_k +1)} + \underbrace{\sqrt{2C_{k-1}}\alpha_k}_{\leq (C_{k-1}+1)(2\alpha_k +1)})\sum_{j=2}^{k-1}s^{\frac{1}{2^j}}t^{1-\frac{1}{2^j}} + \sqrt{2C_{k-1}}\alpha_k s^{\frac{1}{2^k}}t^{1-\frac{1}{2^k}}\\
&\le C_k\sum_{j=0}^k s^{\frac{1}{2^j}}t^{1-\frac{1}{2^j}},
\end{align*}
where the first inequality is a consequence of the inductive hypothesis and $\alpha_k \geq \beta_k$, the second inequality holds by the subadditivity of the square root function, and the third inequality follows from the fact that $C_{k-1} \geq 1$ holds, which implies that the coefficients of $s^{\frac{1}{2^j}}t^{1-\frac{1}{2^j}}$ for every $j = 0,\dots,k$ are bounded by $2(C_{k-1} + 1)(2\alpha_k + 1)$.
Therefore, \eqref{eq:ub_phii} also holds for $i = k$.
\end{proof}

Applying Lemma~\ref{lem:0416_phi_bound} to the error bound in \eqref{eq:eb_general}, we obtain an alternative radial-type H\"{o}lder error bound of the form \eqref{eq:radial_Holder_eb} involving only $\alpha_1,\dots,\alpha_{d^*}$. We note that this form is, in general, independent and simpler than the one given in Theorem~\ref{thm:eb_alpha_beta}. 

\begin{theorem}[\textbf{Alternative radial-type error bound over the whole space}]\label{thm:error_bound}
Let $d^* \ge 1$.
Then
\begin{equation*}
\dist(\bm{X},\calV\cap\calS_+^n) \le \kappa(\norm{\bm{X}}_{\rm F})12^{d^*-1}\alpha_1\cdots\alpha_{d^*}\displaystyle \sum_{j=0}^{d^*}\calE_{\rm b}(\bm{X})^{\frac{1}{2^j}}\lVert\bm{X}\rVert_{\rm F}^{1-\frac{1}{2^j}} \text{ for all $\bm{X}\in \calS^n$}.
\end{equation*}
In other words, the radial-type H\"{o}lder error bound in \eqref{eq:radial_Holder_eb} holds with 
\begin{equation*}
\mu_j(\rho)\coloneqq 12^{d^*-1}\alpha_1\cdots\alpha_{d^*} \kappa(\rho) \, \rho^{1-\frac{1}{2^j}} \text{ and } \gamma_j=\frac{1}{2^j} \text{ for every $j=0,\ldots,\ell$, and } \ell=d^*.
\end{equation*}
\end{theorem}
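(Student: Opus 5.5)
This is a direct combination of the general bound \eqref{eq:eb_general} with Lemma~\ref{lem:0416_phi_bound}. Fix $\bm{X}\in\calS^n$ and set $s\coloneqq\calE_{\rm b}(\bm{X})$ and $t\coloneqq\norm{\bm{X}}_{\rm F}$.

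First, the preceding theorem gives
\begin{equation*}
\dist(\bm{X},\calV\cap\calS_+^n)\le\kappa(\norm{\bm{X}}_{\rm F})\,\phi_{d^*}(s,t).
\end{equation*}
Second, since $d^*\ge1$, we apply Lemma~\ref{lem:0416_phi_bound} with $i=d^*$:
\begin{equation*}
\phi_{d^*}(s,t)\le C_{d^*}\sum_{j=0}^{d^*}s^{\frac{1}{2^j}}t^{1-\frac{1}{2^j}}
\end{equation*}
with $C_{d^*}\le12^{d^*-1}\alpha_1\cdots\alpha_{d^*}$. Third, $\kappa$ is nonnegative: $\theta\ge1$, $\eta\ge0$, and the case-defined alternatives are also nonnegative. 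Multiplying the second bound by $\kappa(\norm{\bm{X}}_{\rm F})$ and chaining it with the first therefore yields the displayed inequality.

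For the reformulation, we read off $\mu_j(\rho)=12^{d^*-1}\alpha_1\cdots\alpha_{d^*}\kappa(\rho)\rho^{1-1/2^j}$ and $\gamma_j=2^{-j}$. It remains to check that each $\mu_j$ is nonnegative and nondecreasing, as \eqref{eq:radial_Holder_eb} requires. The factor $\rho^{1-1/2^j}$ is nondecreasing and nonnegative. So it suffices that $\kappa$ is nondecreasing, since a product of nonnegative nondecreasing functions is nondecreasing. In the polyhedral case $\kappa$ is constant. In the other case $\theta(\rho)$ is a supremum over the growing balls $\bbB(\bm{O},\rho)$, hence nondecreasing. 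If both cases hold and the minimum \eqref{eq:kappa_min} is used, that minimum is still nondecreasing. Finally, $\gamma_j\le\gamma_0=1$.

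There is no real obstacle here. The only points needing care are the monotonicity of $\kappa$ and remembering that $\alpha_i\ge1$ (Remark~\ref{rem:alpha_beta}) is already built into Lemma~\ref{lem:0416_phi_bound}.
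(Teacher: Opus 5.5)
Your proposal is correct and follows the paper's argument, which simply applies Lemma~\ref{lem:0416_phi_bound} with $i=d^*$ and $C_{d^*}\le 12^{d^*-1}\alpha_1\cdots\alpha_{d^*}$ to the bound \eqref{eq:eb_general}. Your extra checks that $\kappa$ is nonnegative and nondecreasing, so that each $\mu_j$ is a legitimate radial modulus function, are sound and make explicit what the paper leaves implicit.
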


Next, for the ease of comparison and discussion for asymptotic tightness in Section~\ref{sec:tight}, we summarize and simplify the derived error bounds in the corollary below, according to the value of $d^* \in \{0,1,\dots,n-1\}$.
In particular, in line with Sturm's error bound \cite{Sturm2000}, we consider the radial-type error bound where the test set $\calR_n$ is given by 
\begin{equation}
\calR_n \coloneqq \{\bm{X} \in \calS^n \mid \calE_{\rm b}(\bm{X}) \le 1\}. \label{eq:Rn}
\end{equation}
Naturally, the conclusion can be extended to  test sets of the form $\{\bm{X} \in \calS^n \mid \calE_{\rm b}(\bm{X}) \le \lambda\}$ for some  $\lambda>0$.

\begin{corollary}[\textbf{Radial-type error {bounds} over test {sets} with a fixed upper bound of the backward {error}}]\label{cor:error_bound_constant}
For all $\bm{X}\in \calR_n$, we have
\begin{align}
\dist(\bm{X},\calV\cap\calS_+^n) 
\le &  
\begin{cases}
\kappa(\norm{\bm{X}}_{\rm F})\,\calE_{\rm b}(\bm{X}) & (d^* = 0), \\
\kappa(\norm{\bm{X}}_{\rm F})(\alpha_1 {\calE_{\rm b}(\bm{X})} + \beta_1  \sqrt{\calE_{\rm b}(\bm{X})\norm{\bm{X}}_{\rm F}}) & (d^* = 1), \\
\kappa(\norm{\bm{X}}_{\rm F})12^{d^*-1}\alpha_1\cdots\alpha_{d^*}(d^*
\max\{\norm{\bm{X}}_{\rm F}^{1-\frac{1}{2^{d^*}}},\sqrt{\norm{\bm{X}}_{\rm F}}\} + 1)\calE_{\rm b}(\bm{X})^{\frac{1}{2^{d^*}}} & (d^* \ge 2),
\end{cases} \label{eq:computable_eb}
\end{align} 
where the function $\kappa$ is given as in Theorem~\ref{thm:kappaB}.
\end{corollary}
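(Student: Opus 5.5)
The proof splits into the three cases of \eqref{eq:computable_eb}; in each case we specialize the general bound \eqref{eq:eb_general} or Theorem~\ref{thm:error_bound}. Throughout, write $e\coloneqq\calE_{\rm b}(\bm{X})\in[0,1]$ and $t\coloneqq\norm{\bm{X}}_{\rm F}$.

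\emph{Case $d^*=0$.} Here $\phi_0(s,t)=s$, so the claim is exactly \eqref{eq:eb_dPPS=0}. It holds on all of $\calS^n$, hence in particular on $\calR_n$.

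\emph{Case $d^*=1$.} Here $\phi_1(s,t)=\widehat{\psi}_1(s,t)=\alpha_1 s+\beta_1\sqrt{st}$. Substituting this into \eqref{eq:eb_general} gives the stated inequality directly, again on all of $\calS^n$.

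\emph{Case $d^*\ge 2$.} We start from Theorem~\ref{thm:error_bound}, which bounds $\dist(\bm{X},\calV\cap\calS_+^n)$ by $\kappa(t)12^{d^*-1}\alpha_1\cdots\alpha_{d^*}\sum_{j=0}^{d^*} e^{1/2^j}t^{1-1/2^j}$. The remaining task is to show that, for $e\le 1$,
\begin{equation*}
\sum_{j=0}^{d^*} e^{\frac{1}{2^j}}t^{1-\frac{1}{2^j}} \le \bigl(d^*\max\{t^{1-\frac{1}{2^{d^*}}},\sqrt{t}\}+1\bigr)e^{\frac{1}{2^{d^*}}}.
\end{equation*}
We bound the terms separately.
\begin{itemize}
\item For $j=0$: since $e\le1$ and $1/2^{d^*}\le 1$, we have $e\le e^{1/2^{d^*}}$. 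This term produces the ``$+1$''.
\item For $1\le j\le d^*$: similarly $e^{1/2^j}\le e^{1/2^{d^*}}$. The exponent $1-1/2^j$ lies in $[1/2,\,1-1/2^{d^*}]$, so $t^{1-1/2^j}\le\max\{t^{1/2},t^{1-1/2^{d^*}}\}$. This holds whether $t\le1$, when the smaller exponent $1/2$ dominates, or $t\ge1$, when the larger exponent dominates. Summing these $d^*$ terms gives the factor $d^*\max\{\cdot\}$.
\end{itemize}

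The only delicate step is the monotonicity of $t^p$ in $p$, which changes direction at $t=1$; taking the maximum of the two endpoint powers handles both regimes. Everything else is a direct substitution of earlier results, so we expect no real obstacle.
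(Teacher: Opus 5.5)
Your proof is correct and matches the paper's argument: the $d^*=0$ case is \eqref{eq:eb_dPPS=0}, and for $d^*\ge 2$ you apply Theorem~\ref{thm:error_bound} with the same two bounds, $e^{1/2^j}\le e^{1/2^{d^*}}$ and $t^{1-1/2^j}\le\max\{t^{1-1/2^{d^*}},\sqrt{t}\}$; the paper justifies the second via the split $t\ge 1$ versus $t<1$. The only cosmetic difference is the case $d^*=1$: you read the bound off \eqref{eq:eb_general} with $\phi_1(s,t)=\alpha_1 s+\beta_1\sqrt{st}$, whereas the paper specializes Theorem~\ref{thm:eb_alpha_beta}, which gives the same expression.
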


\begin{proof}
The result follows from \eqref{eq:eb_dPPS=0} when $d^* = 0$ and from Theorem~\ref{thm:eb_alpha_beta} when $d^* = 1$.
In what follows, we consider the case where $d^* \ge 2$.
Let $\bm{X} \in \calR_n$ be arbitrary.
It follows from $\calE_{\rm b}(\bm{X}) \le 1$ that
\begin{equation}
\sum_{j=0}^{d^*}\calE_{\rm b}(\bm{X})^{\frac{1}{2^j}}\lVert\bm{X}\rVert_{\rm F}^{1-\frac{1}{2^j}} \le \left(\sum_{j=0}^{d^*}\norm{\bm{X}}_{\rm F}^{1-\frac{1}{2^j}}\right)\calE_{\rm b}(\bm{X})^{\frac{1}{2^{d^*}}} = \left(\sum_{j=1}^{d^*}\norm{\bm{X}}_{\rm F}^{1-\frac{1}{2^j}} + 1\right)\calE_{\rm b}(\bm{X})^{\frac{1}{2^{d^*}}}. \label{eq:sum_dX_times_normX_ub}
\end{equation}
If $\norm{\bm{X}}_{\rm F} \ge 1$, then since $\norm{\bm{X}}_{\rm F}^{1-\frac{1}{2^j}} \le \norm{\bm{X}}_{\rm F}^{1-\frac{1}{2^{d^*}}}$ for all $j = 1,\dots,d^*$, we have
\begin{equation}
\sum_{j=1}^{d^*}\norm{\bm{X}}_{\rm F}^{1-\frac{1}{2^j}} \le d^*\norm{\bm{X}}_{\rm F}^{1-\frac{1}{2^{d^*}}}. \label{eq:sum_dX_times_normX_ub_normX>=1}
\end{equation}
If $\norm{\bm{X}}_{\rm F} < 1$, then since $\norm{\bm{X}}_{\rm F}^{1-\frac{1}{2^j}} \le \sqrt{\norm{\bm{X}}_{\rm F}}$ for all $j = 1,\dots,d^*$, we have
\begin{equation}
\sum_{j=1}^{d^*}\norm{\bm{X}}_{\rm F}^{1-\frac{1}{2^j}} \le d^*\sqrt{\norm{\bm{X}}_{\rm F}}. \label{eq:sum_dX_times_normX_ub_normX<1}
\end{equation}
By \eqref{eq:sum_dX_times_normX_ub_normX>=1} and \eqref{eq:sum_dX_times_normX_ub_normX<1}, we have
\begin{equation*}
\eqref{eq:sum_dX_times_normX_ub} \le (d^*
\max\{\norm{\bm{X}}_{\rm F}^{1-\frac{1}{2^{d^*}}},\sqrt{\norm{\bm{X}}_{\rm F}}\} + 1)\calE_{\rm b}(\bm{X})^{\frac{1}{2^{d^*}}}.
\end{equation*}
Therefore, the conclusion directly follows from Theorem~\ref{thm:error_bound}.
\end{proof}

\begin{remark}[\textbf{Comparison with Sturm's (local) H\"{o}lder error bound}]
Let $d_{\rm s}$ be the singularity degree of the semidefinite feasibility problem $\Feas(\calV,\calS_+^n)$.
In a pioneering work,  Sturm~\cite{Sturm2000} showed that for any $\rho>0$ there exists $\alpha_\rho>0$ such that
\begin{equation*}
\dist(\bm{X},\calV\cap\calS_+^n) \le \alpha_{\rho} \epsilon^{\frac{1}{2^{d_{\rm s}}}} \text{ for any $\bm{X} \in \mathcal{S}^n$ with $\|\bm{X}\|_{\rm F} \le \rho$ and $\calE_{\rm b}(\bm{X}) \le \epsilon$},
\end{equation*}
where $\epsilon \in [0,1]$. 
 
On the other hand, Corollary~\ref{cor:error_bound_constant} implies that  for any $\bm{X} \in \mathcal{S}^n$ with $\|\bm{X}\|_{\rm F} \le \rho$ and $\calE_{\rm b}(\bm{X}) \le \epsilon$ where $\epsilon \in [0,1]$, one has 
\begin{equation*}
\dist(\bm{X},\calV\cap\calS_+^n) \le \alpha_{\rho} \epsilon^{\frac{1}{2^{d^*}}},
\end{equation*}
where $\alpha_{\rho}$ is given by
\begin{equation*}
\alpha_{\rho}=\begin{cases}
\kappa(\rho) & (d^* = 0), \\
\kappa(\rho)(\alpha_1 + \beta_1\sqrt{\rho}) & (d^* = 1), \\
\kappa(\rho)12^{d^*-1}\alpha_1\cdots\alpha_{d^*}(d^*
\max\{\rho^{1-\frac{1}{2^{d^*}}},\sqrt{\rho}\} + 1) & (d^* \ge 2).
\end{cases}  
\end{equation*}
Since $d^*=d_{\rm PPS}(\calV,\calS_+^n) \le d_{\rm s}$ always holds, we complement Sturm's result by providing explicit expressions for the constant $\alpha_{\rho}$.
\end{remark}

\section{Asymptotic tightness of the derived radial-type  error bound}\label{sec:tight}
In this section, we discuss the asymptotic tightness of the derived error bound. For ease of comparison, we focus on the most explicit form of radial-type error bounds given in \eqref{eq:computable_eb}.
We consider three cases depending on the quantity $d^*=d_{\rm PPS}(\calV,\calS_+^n)$, the distance to the PPS condition, of the problem $\Feas(\mathcal{V},\mathcal{S}^n_+)$.

We present two complementary perspectives on this aspect. On the one hand, we first examine the cases where $d^* \in \{0,1\}$, for which $d^*$ is independent of the ambient dimension.
In these cases, we provide families of instances where the error bound derived in the previous section is asymptotically tight up to a dimension-free constant.
 On the other hand, when $d^*=n-1$, corresponding to the most singular case, we identify a family of instances with varying dimension for which the derived error bound need not be asymptotically tight, as we will see at the end of the section.
\subsection{The case  $d^*= 0$}\label{sec:dPPS_0}
In this subsection, we examine a family of semidefinite feasibility problems $\Feas(\calV,\calS_+^n)$ with $d^*\coloneqq d_{\rm PPS}(\calV,\calS_+^n) = 0$ for which the error bound in \eqref{eq:computable_eb} is asymptotically tight with respect to a dimension-free constant.

Let $n\in \bbZ_{\ge 2}$ and
\begin{equation*}
\calV_n \coloneqq \{\Diag(-t + 1, t + 1,\dots,t + 1) \mid  t \in \bbR\}.
\end{equation*}
Noting that
\begin{align}
\calV_n \cap \calS_+^n &= \{\Diag(-t+1, t+1,\dots, t+1) \mid -1\le t \le 1\}, \label{eq:sd0_V_cap_PSD}\\
\calV_n \cap \setint\calS_+^n &= \{\Diag(-t+1,t + 1,\dots,t + 1) \mid -1< t < 1\}, \nonumber
\end{align}
we see that the problem $\Feas(\calV_n,\calS_+^n)$ satisfies Slater's condition, so $d^* = 0$ holds.
Moreover, Corollary~\ref{cor:error_bound_constant} implies that $\dist(\bm{X},\calV_n\cap\calS_+^n) \le 
\kappa(\norm{\bm{X}}_{\rm F})\calE_{\rm b}(\bm{X})$ for all $\bm{X} \in \calR_n$.
For convenience, let $n_k \coloneqq k + 1$ and
\begin{equation*}
\bm{X}_k \coloneqq \Diag\left(-\frac{1}{k+1},2 + \frac{1}{k+1},\dots,2 + \frac{1}{k+1}\right) \in \calS^{n_k}
\end{equation*}
for each $k \in \bbZ_{\ge 1}$. 

For this example, {the radial modulus function  $\kappa$} scales with the norm of the matrix (and so, cannot be a constant function).
Using \eqref{eq:dist_PSDcone}, we have $\dist(\bm{X}_k,\calV_{n_k}) = 0$ and $\dist(\bm{X}_k,\calS_+^{n_k}) = \frac{1}{k+1}$.
In particular, $\calE_{\rm b}(\bm{X}_k)= \frac{1}{k+1} \le 1$. Moreover, by \eqref{eq:sd0_V_cap_PSD}, 
\begin{equation}
\dist(\bm{X}_k,\calV_{n_k} \cap \calS_+^{n_k}) = \frac{1}{\sqrt{k+1}}. \label{eq:sd0_left-hand}
\end{equation}
Thus, for an error bound as in the first case of \eqref{eq:computable_eb} to hold,  $\kappa(\|\bm{X}_k\|_{\rm F})$ must be at least on the order of $\sqrt{k}$.

Next, let us compute an upper bound for $\kappa(\|\bm{X}_k\|_{\rm F})$ according to \eqref{eq:def_barkappa}.
It follows from $\calV_{n_k} \subseteq \setspan\calS_+^{n_k}$ and Remark~\ref{rem:eta} that $\max\{\eta(\calV_{n_k},\setspan\calS_+^{n_k}),1\} = 1$.
In addition, letting $\bm{U}_0 \coloneqq \bm{I}_{n_k} \in \calV_{n_k} \cap \calS_+^{n_k}$ and using the bound in \eqref{eq:bar_theta}, we have
\begin{equation*}
\theta(\norm{\bm{X}_k}_{\rm F}) \le \overline{\theta}(\norm{\bm{X}_k}_{\rm F};\bm{U}_0) = 1 + 2\sqrt{k+1} + 2\sqrt{4(k+1) - \frac{3}{k+1}}.
\end{equation*}
It then follows from \eqref{eq:def_barkappa} that
\begin{equation*}
\kappa(\|\bm{X}_k\|_{\rm F}) \le \overline{\kappa}(\|\bm{X}_k\|_{\rm F};\bm{U}_0) = 1 + 2\sqrt{k+1} + 2\sqrt{4(k+1) - \frac{3}{k+1}}.
\end{equation*}
{Thus, combining this with the fact that $\kappa(\norm{\bm{X}_k}_{\rm F})$ is at least on the order of $\sqrt{k}$, we conclude that $\kappa(\|\bm{X}_k\|_{\rm F})$ grows proportionally to $\sqrt{k}$.}

Next, we show that the error bound estimate is asymptotically tight.
Recall that $\calE_{\rm b}(\bm{X}_k)=\frac{1}{k+1}$. Then the right-hand side of \eqref{eq:computable_eb} is bounded by
\begin{equation}
\kappa(\norm{\bm{X}_k}_{\rm F})\, \calE_{\rm b}(\bm{X}_{n_k}) \le \left(1 + 2\sqrt{k+1} + 2\sqrt{4(k+1) - \frac{3}{k+1}}\right)\frac{1}{k+1}. \label{eq:eb_sd0_rhs}
\end{equation}
It then follows from \eqref{eq:sd0_left-hand} and \eqref{eq:eb_sd0_rhs} that
\begin{equation*}
\liminf_{k\to \infty} \frac{\dist(\bm{X}_k,\calV_{n_k}\cap\calS_+^{n_k})}{\kappa(\norm{\bm{X}_k}_{\rm F})\calE_{\rm b}(\bm{X}_k)} \ge \liminf_{k\to \infty}\frac{\sqrt{k+1}}{1 + 2\sqrt{k+1} + 2\sqrt{4(k+1) - \frac{3}{k+1}}} = \frac{1}{6}.
\end{equation*}
Thus, the derived error bound is asymptotically tight up to the dimension-free constant $1/6$ for this instance.

\subsection{The case  $d^*= 1$}\label{sec:dPPS_1}
For every $n \in \bbZ_{\ge 3}$, we define
\begin{equation}
\calV_n \coloneqq \left\{\bm{A} \in \calS^n \relmiddle| \sum_{i=1}^n A_{ii} = 1,\ \sum_{i=3}^n A_{ii} = 0\right\}.\label{eq:dPPS=1_V}
\end{equation}
We note that $\calV_n \cap \calS_+^n = \{\Diag(\bm{A},\bm{O} ) \in \calS^n \mid \bm{A} \in \calS_+^2,\ A_{11} + A_{22} = 1\}$.
The cone $\calS_+^n$ is not polyhedral since $n\ge 3$. Furthermore, $\calV_n \cap \setint\calS_+^n$ is empty.
Let
\begin{align}
\bm{Z}_1 &\coloneqq \Diag(0,0,1,\dots,1) \in \calS_+^n \cap \calV_n^\perp, \nonumber\\
\calF_2 &\coloneqq \calS_+^n \cap \{\bm{Z}_1\}^\perp = \calS_+^2 \oplus \{0\}^{n-2}. \label{eq:d*=1_F2}
\end{align}
Then, the problem $\Feas(\calV_n,\calF_2)$ satisfies the PPS condition because $\mathcal{V}_n \cap \ri\calF_2 \neq \emptyset$.
Therefore, $d^* = 1$ holds.
For this problem, we show that the error bound in \eqref{eq:computable_eb} is asymptotically tight up to a dimension-free constant.
 
For every $k\in \bbZ_{\ge 1}$, let $n_k \coloneqq k + 3$ and 
\begin{equation}
\bm{X}_k \coloneqq \begin{pmatrix}
1 & 0 & 0 & 0           & 0           & \cdots      & 0           & 0\\
0 & 0 & 0 & 0           & 0           & \cdots      & 0           & 0\\
0 & 0 & 0 & \frac{1}{k} & \frac{1}{k} & \cdots      & \frac{1}{k} & \frac{1}{k} \\
0 & 0 & \frac{1}{k}     & 0           & \frac{1}{k} & \cdots & \frac{1}{k} & \frac{1}{k} \\
0 & 0 & \frac{1}{k}     & \frac{1}{k} & 0           & \cdots & \frac{1}{k} & \frac{1}{k} \\
0 & 0 & \vdots          & \vdots      & \vdots      & \ddots & \vdots      & \vdots\\ 
0 & 0 & \frac{1}{k}     & \frac{1}{k} & \frac{1}{k} & \cdots & 0           & \frac{1}{k} \\
0 & 0 & \frac{1}{k}     & \frac{1}{k} & \frac{1}{k} & \cdots & \frac{1}{k} & 0
\end{pmatrix} \in \calS^{n_k}. \label{eq:Xk_infeasible}
\end{equation}
Since the upper left $2\times 2$ block of
$\bm{X}_k$ is positive semidefinite and $(\bm{X}_k)_{11} + (\bm{X}_k)_{22} = 1$ holds, the left-hand side of \eqref{eq:computable_eb} is
\begin{equation}
\dist(\bm{X}_k,\calV_{n_k} \cap \calS_+^{n_k}) = \sqrt{\sum_{i,j=3}^{n_k}(\bm{X}_k)_{ij}^2} = \sqrt{\frac{k+1}{k}} \to 1 \qquad (\text{as}\,\,k\to\infty). \label{eq:lhs_eb_infeasible}
\end{equation}
We next evaluate the right-hand side of \eqref{eq:computable_eb}.
Since $\bm{X}_k \in \calV_{n_k}$ and $\bm{X}_k$ has eigenvalues $1$ with multiplicity $2$, $0$ with multiplicity $1$, and $-1/k$ with multiplicity $k$, by \eqref{eq:dist_PSDcone}, we have $\dist(\bm{X}_k,\calV_{n_k}) = 0$ and $\dist(\bm{X}_k,\calS_+^{n_k}) = 1/\sqrt{k}$.
Therefore, $\calE_{\rm b}(\bm{X}_k) = 1/\sqrt{k}$.
Moreover, by choosing $\bm{X}^\star \coloneqq \Diag(1/2,1/2,0,\dots,0) \in \calV_{n_k} \cap \ri\calF_2$, it follows from \eqref{eq:def_barkappa} that $\kappa(\norm{\bm{X}_k}_{\rm F}) \le \overline{\kappa}(\norm{\bm{X}_k}_{\rm F};\bm{X}^\star)$.
As shown in Appendix~\ref{apdx:calc_comps}, we have
$\overline{\kappa}(\rho;\bm{X}^\star) = \sqrt{2}(1 + 2\sqrt{2} + 4\rho)$,
$\alpha_1 = \sqrt{n + 8\sqrt{n-2} + 15}$, and
$\beta_1 = \sqrt{\smash[b]{2\sqrt{2(n-2)} + 6\sqrt{2}}}$,
where $n = n_k = k+3$.
Hence, by combining these estimates, the right-hand side of \eqref{eq:computable_eb} at $\bm{X} = \bm{X}_k$ is bounded by
\begin{align}
&\kappa(\norm{\bm{X}_k}_{\rm F})(\alpha_1 \calE_{\rm b}(\bm{X}_k) + \beta_1 \sqrt{\calE_{\rm b}(\bm{X}_k)\norm{\bm{X}_k}_{\rm F}}) \nonumber\\
\le{}&
\overline{\kappa}(\norm{\bm{X}_k}_{\rm F};\bm{X}^\star)
(\alpha_1 \calE_{\rm b}(\bm{X}_k) + \beta_1 \sqrt{\calE_{\rm b}(\bm{X}_k)\norm{\bm{X}_k}_{\rm F}})\nonumber\\
\le{}&
\sqrt{2}\left(1 + 2\sqrt{2} + 4\sqrt{\frac{2k+1}{k}}\right)
\left(\sqrt{\frac{k + 8\sqrt{k+1} + 18}{k}} + \sqrt{\frac{(2\sqrt{2(k+1)} + 6\sqrt{2})\sqrt{2k+1}}{k}}\right)\nonumber\\
\to{}& 3(12 + \sqrt{2}) \qquad (\text{as}\,\, k\to \infty). \label{eq:rhs_eb_infeasible}
\end{align}
It follows from \eqref{eq:lhs_eb_infeasible} and \eqref{eq:rhs_eb_infeasible} that
\begin{equation*}
\liminf_{k\to \infty} \frac{\dist(\bm{X}_k,\calV_{n_k} \cap \calS_+^{n_k})}{\kappa(\norm{\bm{X}_k}_{\rm F})(\alpha_1 \calE_{\rm b}(\bm{X}_k) + \beta_1 \sqrt{\calE_{\rm b}(\bm{X}_k)\norm{\bm{X}_k}_{\rm F}})} \ge \frac{1}{3(12 + \sqrt{2})}.
\end{equation*}
Thus, in this instance, the error bound in \eqref{eq:computable_eb} is asymptotically tight up to the dimension-free constant $\frac{1}{3(12 + \sqrt{2})}$.

We will revisit this instance in Example~\ref{ex:infeasible_X} as the set of optimal solutions of an SDP satisfying the strict complementarity condition.

\subsection{The case $d^*= n-1$}\label{sec:dPPS_n-1}
For every $n \in \bbZ_{\ge 3}$, let
\begin{equation}
\calV_n \coloneqq \{\bm{A}\in \calS^n \mid A_{nn} = 0,\ A_{ii} = A_{1,i+1}\ (i = 2,\dots,n-1)\}. \label{eq:vecsp_dPPSmax}
\end{equation}
The vector space $\calV_n$ defined in \eqref{eq:vecsp_dPPSmax} is obtained from the instance in \cite[Example~2]{Sturm2000} by reordering rows and columns.
The singularity degree of this instance is $n-1$, and hence \eqref{eq:dpps_eq_ds} yields $d^* = n-1$.
After facial reduction, we have a chain $\calF_n \subsetneq \cdots \subsetneq \calF_1$ of faces of $\calS_+^n$ such that $\calF_n = \mathbb{R}_+\bm{E}_{11}$.
In what follows, we show that, for this particular family of instances $\mathcal{V}_n$, the error bound in \eqref{eq:computable_eb} is not asymptotically tight up to a dimension-free constant. 

Take a sequence $(n_k) \subseteq \bbZ_{\ge 3}$ satisfying $\lim_{k\to \infty} n_k = \infty$ and a sequence $(\bm{X}_k)$ satisfying $\bm{X}_k \in \calR_{n_k}$ arbitrarily, where we recall that $\calR_{n_k}$ is defined in \eqref{eq:Rn}.
For these sequences, we prove that the limit inferior of the ratio of the left-hand side of \eqref{eq:computable_eb} to the right-hand side of \eqref{eq:computable_eb}, i.e.,
\begin{equation*}
\rho_k \coloneqq \frac{\dist(\bm{X}_k,\calV_{n_k} \cap \calS_+^{n_k})}{\kappa(\norm{\bm{X}_k}_{\rm F})12^{n_k-2}\alpha_1\cdots \alpha_{n_k-1}((n_k-1)\max\{\norm{\bm{X}_k}_{\rm F}^{1-\frac{1}{2^{n_k-1}}},\sqrt{\norm{\bm{X}_k}_{\rm F}}\} + 1)\calE_{\rm b}(\bm{X}_k)^{\frac{1}{2^{n_k-1}}}}
\end{equation*}
is $0$.
For simplicity, we write $\epsilon_k$ for $\calE_{\rm b}(\bm{X}_k)$.

First, we provide a lower bound for the right-hand side of \eqref{eq:computable_eb}.
The value of $\kappa(\norm{\bm{X}_k}_{\rm F})$ is
\begin{equation*}
\kappa(\norm{\bm{X}_k}_{\rm F}) = \min\{\max\{\eta(\calV_{n_k},\setspan\calF_{n_k}),1\}\theta(\norm{\bm{X}_k}_{\rm F}),\eta(\calV_{n_k},\calF_{n_k})\} = 1,
\end{equation*}
where the first equality holds since $\calV_{n_k} \cap \ri\calF_{n_k} \neq \emptyset$ and $\calF_{n_k} = \mathbb{R}_+\bm{E}_{11}$ is polyhedral (see \eqref{eq:kappa_min}), and the second equality follows from $\eta(\calV_{n_k},\calF_{n_k}) = 1$, which is implied by $\calF_{n_k} \subseteq \calV_{n_k}$. 
In addition, it follows from Remark~\ref{rem:alpha_beta} that $\alpha_i \ge 1$ for each $i = 1,\dots,n_k-1$.
Therefore, we have
\begin{align}
&\kappa(\norm{\bm{X}_k}_{\rm F})12^{n_k-2}\alpha_1\cdots \alpha_{n_k-1}((n_k-1)\max\{\norm{\bm{X}_k}_{\rm F}^{1-\frac{1}{2^{n_k-1}}},\sqrt{\norm{\bm{X}_k}_{\rm F}}\} + 1)\epsilon_k^{\frac{1}{2^{n_k-1}}} \nonumber\\
\ge{}& 12^{n_k-2}(\max\{\norm{\bm{X}_k}_{\rm F}^{1-\frac{1}{2^{n_k-1}}},\sqrt{\norm{\bm{X}_k}_{\rm F}}\} + 1)\epsilon_k^{\frac{1}{2^{n_k-1}}}. \label{eq:lb_ub_eb_dPPSmax}
\end{align}

Next, we provide an upper bound for $\dist(\bm{X}_k,\calV_{n_k} \cap \calS_+^{n_k})$, the left-hand side of \eqref{eq:computable_eb}.
By $\calV_{n_k} \cap \calS_+^{n_k} = \bbR_+\bm{E}_{11}$ and the subadditivity of the square root function, we see that
\begin{align}
\dist(\bm{X}_k,\calV_{n_k}\cap \calS_+^{n_k}) &\le \abs{(\bm{X}_k)_{n_k n_k}} + \dist((\bm{X}_k)_{11},\bbR_+) + \sqrt{2}\sum_{s=0}^{n_k-2}\abs{(\bm{X}_k)_{1,n_k-s}} \nonumber \\
&\quad + \sum_{s=0}^{n_k-3}\abs{(\bm{X}_k)_{n_k-s-1,n_k-s-1}} + \sqrt{2}\sum_{0\le t < s\le n_k-2}\abs{(\bm{X}_k)_{n_k-s,n_k-t}}. \label{eq:dist^2_to_V_cap_PSDcone_ub}
\end{align}
We bound each term of the right-hand side of \eqref{eq:dist^2_to_V_cap_PSDcone_ub}.
Firstly, since 
\begin{equation*}
\dist(\bm{X}_k,\calV_{n_k}) = \sqrt{\frac{2}{3}\sum_{i=2}^{n_k-1}((\bm{X}_k)_{ii} - (\bm{X}_k)_{1,i+1})^2 + (\bm{X}_k)_{n_k n_k}^2}
\end{equation*}
and it does not exceed $\epsilon_k$, we have
\begin{align}
\sqrt{2/3}\abs{(\bm{X}_k)_{ii} - (\bm{X}_k)_{1,i+1}} &\le \epsilon_k\ (i = 2,\dots,n_k-1), \label{eq:ub_diff_Xii_X1i+1}\\
\abs{(\bm{X}_k)_{n_k n_k}} &\le \epsilon_k. \label{eq:ub_Xnn}
\end{align}
In particular, \eqref{eq:ub_Xnn} implies that, by using $\epsilon_k \le 1$ and $1\le (\norm{\bm{X}_k}_{\rm F} + 1)^{1-\frac{1}{2^{n_k-1}}}$, the first term of the right-hand side of \eqref{eq:dist^2_to_V_cap_PSDcone_ub}, i.e., $\abs{(\bm{X}_k)_{n_k n_k}}$ can be bounded by
\begin{equation}
\abs{(\bm{X}_k)_{n_k n_k}} \le (\norm{\bm{X}_k}_{\rm F} + 1)^{1-\frac{1}{2^{n_k-1}}}\epsilon_k^{\frac{1}{2^{n_k-1}}}. \label{eq:ub_Xnn_2}
\end{equation}
Secondly, the second term of the right-hand side of \eqref{eq:dist^2_to_V_cap_PSDcone_ub} admits the bound
\begin{equation}
\dist((\bm{X}_k)_{11},\bbR_+) \le \epsilon_k \le (\norm{\bm{X}_k}_{\rm F} + 1)^{1-\frac{1}{2^{n_k-1}}}\epsilon_k^{\frac{1}{2^{n_k-1}}},\label{eq:dist_X11_to_nno}
\end{equation}
where the first inequality holds by $\dist(\bm{X}_k,\calS_+^{n_k}) \le \epsilon_k$ and \eqref{eq:ineq_dist_PSDcone}, and the second inequality follows in the same manner as \eqref{eq:ub_Xnn_2}.
In the following lemma, we bound the third and fourth terms of the right-hand side of \eqref{eq:dist^2_to_V_cap_PSDcone_ub}.

\begin{lemma}\label{lem:ub_Xabs}
It follows that
\begin{align}
\lvert (\bm{X}_k)_{1,n_k-s}\rvert &\le (2 + (\sqrt{3/2} + 1)s)(\norm{\bm{X}_k}_{\rm F} + 1)^{1 - \frac{1}{2^{s+1}}}\epsilon_k^{\frac{1}{2^{s+1}}}\ (s = 0,\dots,n_k-2), \label{eq:ub_absX1n-k}\\
\lvert (\bm{X}_k)_{n_k-s-1,n_k-s-1}\rvert &\le (2 + \sqrt{3/2} + (\sqrt{3/2} + 1)s)(\norm{\bm{X}_k}_{\rm F} + 1)^{1 - \frac{1}{2^{s+1}}}\epsilon_k^{\frac{1}{2^{s+1}}}\ (s = 0,\dots,n_k-3). \label{eq:ub_absXn-k-1}
\end{align}
In particular, there exist constants  $a_{n_k}$ and $b_{n_k}$ (whose dependency on $n_k$ is polynomial) such that
\begin{align}
\sqrt{2}\sum_{s=0}^{n_k-2}\abs{X_{1,n_k-s}} &\le a_{n_k}(\norm{\bm{X}_k}_{\rm F} + 1)^{1 - \frac{1}{2^{n_k-1}}}\epsilon_k^{\frac{1}{2^{n_k-1}}}, \label{eq:ub_sum_absX1n-k}\\
\sum_{s=0}^{n_k-3}\abs{X_{n_k-s-1,n_k-s-1}} &\le b_{n_k}(\norm{\bm{X}_k}_{\rm F} + 1)^{1 - \frac{1}{2^{n_k-1}}}\epsilon_k^{\frac{1}{2^{n_k-1}}}. \label{eq:ub_sub_absXn-k-1}
\end{align}
\end{lemma}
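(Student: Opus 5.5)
The plan is an induction on $s$ that alternates between the two families of entries. Write $X$ for $\bm{X}_k$, $n$ for $n_k$ and $\epsilon$ for $\epsilon_k$. The key ingredient is that $\dist(X,\calS_+^{n})\le\epsilon$ forces every eigenvalue of $X$ to be at least $-\epsilon$ by \eqref{eq:dist_PSDcone}. Hence $X+\epsilon\bm{I}_n\in\calS_+^n$, and every $2\times 2$ principal minor of $X+\epsilon\bm{I}_n$ is nonnegative:
\begin{equation*}
X_{1j}^2\le (X_{11}+\epsilon)(X_{jj}+\epsilon),
\end{equation*}
where both factors are nonnegative. Since $\epsilon\le 1$, we have $X_{11}+\epsilon\le \norm{X}_{\rm F}+1$, and we may replace $X_{jj}+\epsilon$ by $\abs{X_{jj}}+\epsilon$. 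The second ingredient is the affine constraint of $\calV_n$ in the form \eqref{eq:ub_diff_Xii_X1i+1}. It links the diagonal entry $X_{n-s-1,n-s-1}$ to the off-diagonal entry $X_{1,n-s}$ via $\abs{X_{n-s-1,n-s-1}}\le \abs{X_{1,n-s}}+\sqrt{3/2}\,\epsilon$. Throughout we use the elementary fact that, because $\epsilon\le 1\le \norm{X}_{\rm F}+1$, the quantity $(\norm{X}_{\rm F}+1)^{1-2^{-m}}\epsilon^{2^{-m}}$ is nondecreasing in $m$. In particular $\epsilon$ itself is bounded by any such expression.

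\emph{Base case $s=0$.} Apply the minor inequality with $j=n$ and use \eqref{eq:ub_Xnn}. This gives $X_{1n}^2\le(\norm{X}_{\rm F}+1)\cdot 2\epsilon$, so $\abs{X_{1n}}\le\sqrt2(\norm{X}_{\rm F}+1)^{1/2}\epsilon^{1/2}$, which is within the coefficient $2$ of \eqref{eq:ub_absX1n-k}. Then the linking inequality gives $\abs{X_{n-1,n-1}}\le \abs{X_{1n}}+\sqrt{3/2}\,\epsilon\le(2+\sqrt{3/2})(\norm{X}_{\rm F}+1)^{1/2}\epsilon^{1/2}$, which is \eqref{eq:ub_absXn-k-1} for $s=0$.

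\emph{Inductive step.} Assume \eqref{eq:ub_absXn-k-1} holds at $s-1$ with constant $c=2+\sqrt{3/2}+(\sqrt{3/2}+1)(s-1)=1+(\sqrt{3/2}+1)s$. That is the bound on $\abs{X_{n-s,n-s}}$ with exponent $2^{-s}$. Absorb $\epsilon\le(\norm{X}_{\rm F}+1)^{1-2^{-s}}\epsilon^{2^{-s}}$ and plug into the minor inequality with $j=n-s$:
\begin{equation*}
X_{1,n-s}^2\le (c+1)(\norm{X}_{\rm F}+1)^{2-2^{-s}}\epsilon^{2^{-s}}.
\end{equation*}
Taking square roots gives exponent $2^{-(s+1)}$ and coefficient $\sqrt{2+(\sqrt{3/2}+1)s}$, which is at most $2+(\sqrt{3/2}+1)s$. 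This proves \eqref{eq:ub_absX1n-k} at $s$. The linking inequality then adds $\sqrt{3/2}\,\epsilon$, again absorbed, and yields \eqref{eq:ub_absXn-k-1} at $s$.

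Finally, \eqref{eq:ub_sum_absX1n-k} and \eqref{eq:ub_sub_absXn-k-1} follow by the monotonicity fact: every exponent $2^{-(s+1)}$ occurring in the sums is at least $2^{-(n-1)}$. So each term is bounded by the same coefficient times $(\norm{X}_{\rm F}+1)^{1-2^{-(n-1)}}\epsilon^{2^{-(n-1)}}$. Summing the linear-in-$s$ coefficients gives quadratic polynomials in $n$:
\begin{align*}
a_{n}&=\sqrt2\sum_{s=0}^{n-2}\bigl(2+(\sqrt{3/2}+1)s\bigr),\\
b_{n}&=\sum_{s=0}^{n-3}\bigl(2+\sqrt{3/2}+(\sqrt{3/2}+1)s\bigr).
\end{align*}
The only delicate point is bookkeeping in the inductive step: checking that absorbing the $\epsilon$ terms and the extra $+1$ inside the square root keeps the coefficient linear in $s$. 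The computation $\sqrt{c+1}\le c+1$ for $c\ge 0$ handles this.
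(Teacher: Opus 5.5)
Your proposal is correct and matches the paper's proof: it uses the same $2\times 2$ principal-minor bound for $\bm{X}_k+\epsilon_k\bm{I}_{n_k}$, the same linking inequality from \eqref{eq:ub_diff_Xii_X1i+1}, and the same alternating induction in which $\epsilon_k$ is absorbed into $(\norm{\bm{X}_k}_{\rm F}+1)^{1-2^{-m}}\epsilon_k^{2^{-m}}$, giving the same constants $a_{n_k}$ and $b_{n_k}$. The only difference is cosmetic: you start each inductive step from the diagonal bound at $s-1$, whereas the paper starts from the off-diagonal bound at $t$.
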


\begin{proof}
We note that \begin{equation}
\lvert (\bm{X}_k)_{1i}\rvert \le \sqrt{((\bm{X}_k)_{11} + \epsilon_k)((\bm{X}_k)_{ii} + \epsilon_k)} \label{eq:ub_X1i}
\end{equation}
holds for every $i = 1,\dots,n_k$.
Indeed, $\bm{X}_k + \epsilon_k \bm{I}_{n_k}$ is positive semidefinite by $\dist(\bm{X}_k,\calS_+^{n_k}) \le \epsilon_k$ and \eqref{eq:dist_PSDcone}.
Hence, the determinant of the principal submatrix of $\bm{X}_k + \epsilon_k \bm{I}$ obtained by extracting the rows and columns indexed by $1$ and $i$ is nonnegative, which leads to \eqref{eq:ub_X1i}.

First, we show the inequalities in \eqref{eq:ub_absX1n-k} and \eqref{eq:ub_absXn-k-1} by induction.
The inequality in \eqref{eq:ub_absX1n-k} holds for $s = 0$ since
\begin{equation*}
\lvert (\bm{X}_k)_{1n_k}\rvert \le \sqrt{((\bm{X}_k)_{11} + \epsilon_k)((\bm{X}_k)_{n_kn_k} + \epsilon_k)} \le \sqrt{2(\norm{\bm{X}_k}_{\rm F} + 1)\epsilon_k} \le 2\sqrt{(\norm{\bm{X}_k}_{\rm F} + 1)\epsilon_k},
\end{equation*}
where we use \eqref{eq:ub_X1i} to derive the first inequality and use $\abs{(\bm{X}_k)_{11}} \le \norm{\bm{X}_k}_{\rm F}$, $\epsilon_k \le 1$, and \eqref{eq:ub_Xnn} to derive the second inequality.
We assume that \eqref{eq:ub_absX1n-k} holds for $s = t \in \{0,\dots,n_k-3\}$.
Then it follows that
\begin{align}
\lvert (\bm{X}_k)_{n_k-t-1,n_k-t-1}\rvert &\le \abs{(\bm{X}_k)_{1,n_k-t}} + \sqrt{3/2}\epsilon_k \nonumber\\
& \le (2 + (\sqrt{3/2} + 1)t)(\norm{\bm{X}_k}_{\rm F} + 1)^{1 - \frac{1}{2^{t+1}}}\epsilon_k^{\frac{1}{2^{t+1}}} + \sqrt{3/2}\epsilon_k \nonumber\\
& \le (2 + \sqrt{3/2} + (\sqrt{3/2} + 1)t)(\norm{\bm{X}_k}_{\rm F} + 1)^{1 - \frac{1}{2^{t+1}}}\epsilon_k^{\frac{1}{2^{t+1}}}, \label{eq:ub_Xn-l-1n-l-1}
\end{align}
where the first inequality holds by \eqref{eq:ub_diff_Xii_X1i+1}, the second inequality results from the inductive hypothesis, and the third inequality holds by $\epsilon_k \le (\norm{\bm{X}_k}_{\rm F} + 1)^{1 - \frac{1}{2^{t+1}}}\epsilon_k^{\frac{1}{2^{t+1}}}$.
Therefore, \eqref{eq:ub_absXn-k-1} holds for $s = t$.
Next, we have
\begin{align*}
\lvert (\bm{X}_k)_{1,n_k-(t+1)}\rvert &\le \sqrt{((\bm{X}_k)_{11} + \epsilon_k)((\bm{X}_k)_{n_k-(t+1),n_k-(t+1)} + \epsilon_k)}\\
&\le \sqrt{
(\norm{\bm{X}_k}_{\rm F} + 1)((2 + \sqrt{3/2} + (\sqrt{3/2} + 1)t)(\norm{\bm{X}_k}_{\rm F} + 1)^{1 - \frac{1}{2^{t+1}}}\epsilon_k^{\frac{1}{2^{t+1}}} + \epsilon_k)}\\
&\le \sqrt{(2 + (\sqrt{3/2} + 1)(t+1))}(\norm{\bm{X}_k}_{\rm F} + 1)^{1 - \frac{1}{2^{t+2}}}\epsilon_k^{\frac{1}{2^{t+2}}} \\
&\le (2 + (\sqrt{3/2} + 1)(t + 1))(\norm{\bm{X}_k}_{\rm F} + 1)^{1 - \frac{1}{2^{t+2}}}\epsilon_k^{\frac{1}{2^{t+2}}},
\end{align*}
where we use \eqref{eq:ub_X1i} to derive the first inequality, use $\abs{(\bm{X}_k)_{11}} \le \norm{\bm{X}_k}_{\rm F}$, $\epsilon_k \le 1$, and \eqref{eq:ub_Xn-l-1n-l-1} to derive the second inequality, use $\epsilon_k \le (\norm{\bm{X}_k}_{\rm F} + 1)^{1 - \frac{1}{2^{t+1}}}\epsilon_k^{\frac{1}{2^{t+1}}}$ to derive the third inequality.
Therefore, \eqref{eq:ub_absX1n-k} holds for $s = t+1$.

Next, using \eqref{eq:ub_absX1n-k}, we have
\begin{align*}
\sqrt{2}\sum_{s=0}^{n_k-2}\abs{X_{1,n_k-s}} &\le \sqrt{2} \sum_{s=0}^{n_k-2} (2 + (\sqrt{3/2} + 1)s)(\norm{\bm{X}_k}_{\rm F} + 1)^{1 - \frac{1}{2^{s+1}}}\epsilon_k^{\frac{1}{2^{s+1}}} \\
&\le \underbrace{\sqrt{2}\left(\sum_{s=0}^{n_k-2}(2 + (\sqrt{3/2} + 1)s)\right)}_{\eqqcolon a_{n_k}}(\norm{\bm{X}_k}_{\rm F} + 1)^{1-\frac{1}{2^{n_k-1}}}\epsilon_k^{\frac{1}{2^{n_k-1}}},
\end{align*}
so we obtain \eqref{eq:ub_sum_absX1n-k}.
Similarly, using \eqref{eq:ub_absXn-k-1}, by setting $b_{n_k} \coloneqq \sum_{s=0}^{n_k-3}(2 + \sqrt{3/2} + (\sqrt{3/2} + 1)s)$, we obtain \eqref{eq:ub_sub_absXn-k-1}.
\end{proof}

In the following lemma, we bound the fifth term of the right-hand side of \eqref{eq:dist^2_to_V_cap_PSDcone_ub}.

\begin{lemma}\label{lem:ub_absXn-kn-l}
There exists a positive constant $c_{n_k}$ (whose dependency on $n_k$ is polynomial) such that 
\begin{equation*}
\sqrt{2}\sum_{0\le t < s\le n_k-2}\abs{(\bm{X}_k)_{n_k-s,n_k-t}} \le c_{n_k}(\norm{\bm{X}_k}_{\rm F} + 1)^{1-\frac{1}{2^{n_k-1}}}\epsilon_k^{\frac{1}{2^{n_k-1}}}.
\end{equation*}
\end{lemma}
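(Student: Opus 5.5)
The natural plan is to reuse the $2\times 2$ principal-minor device from the proof of Lemma~\ref{lem:ub_Xabs}, now applied to off-diagonal entries with both indices at least $2$. Since $\bm{X}_k+\epsilon_k\bm{I}_{n_k}$ is positive semidefinite, its principal submatrix indexed by $\{n_k-s,n_k-t\}$ has nonnegative determinant. This gives
\begin{equation*}
\abs{(\bm{X}_k)_{n_k-s,n_k-t}} \le \sqrt{((\bm{X}_k)_{n_k-s,n_k-s}+\epsilon_k)((\bm{X}_k)_{n_k-t,n_k-t}+\epsilon_k)}
\end{equation*}
for $0\le t<s\le n_k-2$. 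Both diagonal indices $n_k-s$ and $n_k-t$ lie in $\{2,\dots,n_k\}$.

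Next we bound each diagonal factor. For an index $n_k-u$ with $u\ge 1$ we write $n_k-u = n_k-(u-1)-1$ and use \eqref{eq:ub_absXn-k-1} with $s=u-1$, which is admissible since $u-1\le n_k-3$. This yields a bound
\begin{equation*}
(2+\sqrt{3/2}+(\sqrt{3/2}+1)(u-1))(\norm{\bm{X}_k}_{\rm F}+1)^{1-\frac{1}{2^{u}}}\epsilon_k^{\frac{1}{2^{u}}}.
\end{equation*}
For the index $n_k$ (the case $u=t=0$) we use \eqref{eq:ub_Xnn} directly, which gives $\abs{(\bm{X}_k)_{n_kn_k}}\le\epsilon_k$. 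Adding $\epsilon_k$ to each diagonal bound costs at most an additive constant in the coefficient, because $\epsilon_k\le(\norm{\bm{X}_k}_{\rm F}+1)^{1-2^{-u}}\epsilon_k^{2^{-u}}$ holds when $\epsilon_k\le1$.

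The main point is to convert the product of the two factors into the common exponent $2^{-(n_k-1)}$. Each factor has the form $c_u(\norm{\bm{X}_k}_{\rm F}+1)^{1-2^{-u}}\epsilon_k^{2^{-u}}$ with $u\le n_k-2$. Since $\epsilon_k\le1\le\norm{\bm{X}_k}_{\rm F}+1$, the quantity $(\norm{\bm{X}_k}_{\rm F}+1)^{1-p}\epsilon_k^{p}$ is nonincreasing in $p\in(0,1]$. Hence each factor is at most $c_u(\norm{\bm{X}_k}_{\rm F}+1)^{1-2^{-(n_k-1)}}\epsilon_k^{2^{-(n_k-1)}}$. The square root of the product of two such bounds is then $\sqrt{c_sc_t}\,(\norm{\bm{X}_k}_{\rm F}+1)^{1-2^{-(n_k-1)}}\epsilon_k^{2^{-(n_k-1)}}$. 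This monotonicity step is the only delicate point. The square root introduces no loss, since the two factors share the same base and taking the geometric mean of equal-type expressions preserves the form.

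Finally we sum over all pairs $0\le t<s\le n_k-2$ and multiply by $\sqrt2$. We set
\begin{equation*}
c_{n_k} \coloneqq \sqrt2\sum_{0\le t<s\le n_k-2}\sqrt{c'_s c'_t},
\end{equation*}
where $c'_u$ denotes the enlarged linear-in-$u$ coefficients above. There are $O(n_k^2)$ pairs and each summand is $O(n_k)$, so $c_{n_k}$ is polynomial in $n_k$, namely $O(n_k^3)$. This gives the claimed inequality.
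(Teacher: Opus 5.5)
Your proposal is correct and matches the paper's proof step for step: the $2\times 2$ principal-minor bound from positive semidefiniteness of $\bm{X}_k+\epsilon_k\bm{I}_{n_k}$, the diagonal bounds from \eqref{eq:ub_absXn-k-1} (index shifted by one) and from \eqref{eq:ub_Xnn} when $t=0$, and the exponent reduction via $\epsilon_k\le 1\le\norm{\bm{X}_k}_{\rm F}+1$, which gives exactly the paper's constant $c_{n_k}$. The only cosmetic difference is that you lower each factor to the exponent $2^{-(n_k-1)}$ before taking the geometric mean, whereas the paper first combines the exponents into $2^{-(s+1)}+2^{-(t+1)}$ and then lowers them.
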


\begin{proof}
It follows from \eqref{eq:ub_absXn-k-1} that
\begin{equation}
(\bm{X}_k)_{n_k-s,n_k-s} + \epsilon_k \le (2 + (\sqrt{3/2} + 1)s)(\norm{\bm{X}_k}_{\rm F} + 1)^{1 - \frac{1}{2^s}}\epsilon_k^{\frac{1}{2^s}} \label{eq:ub_Xnk-s_plus_delta}
\end{equation}
for every $s = 1,\dots,n_k-2$.
The inequality in \eqref{eq:ub_Xnk-s_plus_delta} also holds for $s=0$ since \eqref{eq:ub_Xnn} implies that $(\bm{X}_k)_{n_k n_k} + \epsilon_k \le 2\epsilon_k$ holds.
Therefore, for each $0 \le t < s \le n_k-2$, we have
\begin{align*}
\abs{(\bm{X}_k)_{n_k-s,n_k-t}} &\le \sqrt{((\bm{X})_{n_k-s,n_k-s} + \epsilon_k)((\bm{X}_k)_{n_k-t,n_k-t} + \epsilon_k)} \nonumber \\
&\le \sqrt{
(2 + (\sqrt{3/2} + 1)s)(2 + (\sqrt{3/2} + 1)t)} (\norm{\bm{X}_k}_{\rm F} + 1)^{1 - \left(\frac{1}{2^{s+1}} + \frac{1}{2^{t+1}}\right)}\epsilon_k^{\frac{1}{2^{s+1}} + \frac{1}{2^{t+1}}}\nonumber\\
&\le \sqrt{
(2 + (\sqrt{3/2} + 1)s)(2 + (\sqrt{3/2} + 1)t)}(\norm{\bm{X}_k}_{\rm F} + 1)^{1 -\frac{1}{2^{n_k-1}}}\epsilon_k^{\frac{1}{2^{n_k-1}}},
\end{align*}
where we use the positive semidefiniteness of $\bm{X}_k + \epsilon_k \bm{I}_{n_k}$ to derive the first inequality, use \eqref{eq:ub_Xnk-s_plus_delta} to derive the second inequality, and use $0 \le t < s \le n_k-2$ to derive the third inequality.
Therefore, by letting
\begin{equation*}
c_{n_k} \coloneqq \sqrt{2}\sum_{0\le t < s\le n_k-2}\sqrt{
(2 + (\sqrt{3/2} + 1)s)(2 + (\sqrt{3/2} + 1)t)},
\end{equation*}
whose order is polynomial in $n_k$, we obtain the desired result.
\end{proof}

By \eqref{eq:dist^2_to_V_cap_PSDcone_ub}, \eqref{eq:ub_Xnn_2}, \eqref{eq:dist_X11_to_nno}, and Lemmas~\ref{lem:ub_Xabs} and \ref{lem:ub_absXn-kn-l}, we obtain
\begin{equation}
\dist(\bm{X}_k,\calV_{n_k}\cap\calS_+^{n_k})
\le (2 + a_{n_k} + b_{n_k} + c_{n_k})(\norm{\bm{X}_k}_{\rm F} + 1)^{1-\frac{1}{2^{n_k-1}}}\epsilon_k^{\frac{1}{2^{n_k-1}}}. \label{eq:dist_to_V_cap_PSDcone_ub}
\end{equation}

When $\norm{\bm{X}_k}_{\rm F} \ge 1$, since $\max\{\norm{\bm{X}_k}_{\rm F}^{1-\frac{1}{2^{n_k-1}}},\sqrt{\norm{\bm{X}_k}_{\rm F}}\} = \norm{\bm{X}_k}_{\rm F}^{1-\frac{1}{2^{n_k-1}}}$ holds by $n_k \ge 3$, it follows from \eqref{eq:lb_ub_eb_dPPSmax} and \eqref{eq:dist_to_V_cap_PSDcone_ub} that
\begin{equation*}
\rho_k \le \frac{(2 + a_{n_k} + b_{n_k} + c_{n_k})(\norm{\bm{X}_k}_{\rm F} + 1)^{1-\frac{1}{2^{n_k-1}}}}{12^{n_k-1}(\norm{\bm{X}_k}_{\rm F}^{1-\frac{1}{2^{n_k-1}}} + 1)} 
\le \frac{2 + a_{n_k} + b_{n_k} + c_{n_k}}{12^{n_k-1}\left(\frac{\norm{\bm{X}_k}_{\rm F}}{\norm{\bm{X}_k}_{\rm F} + 1}\right)^{1-\frac{1}{2^{n_k-1}}}}
\le \frac{2(2 + a_{n_k} + b_{n_k} + c_{n_k})}{12^{n_k-1}}.
\end{equation*}
When $\norm{\bm{X}_k}_{\rm F} < 1$, since $\max\{\norm{\bm{X}_k}_{\rm F}^{1-\frac{1}{2^{n_k-1}}},\sqrt{\norm{\bm{X}_k}_{\rm F}}\} = \sqrt{\norm{\bm{X}_k}_{\rm F}}$ holds by $n_k \ge 3$, it follows from \eqref{eq:lb_ub_eb_dPPSmax} and \eqref{eq:dist_to_V_cap_PSDcone_ub} that
\begin{equation*}
\rho_k \le \frac{(2 + a_{n_k} + b_{n_k} + c_{n_k})(\norm{\bm{X}_k}_{\rm F} + 1)^{1-\frac{1}{2^{n_k-1}}}}{12^{n_k-1}(\sqrt{\norm{\bm{X}_k}_{\rm F}} + 1)} \le \frac{2(2 + a_{n_k} + b_{n_k} + c_{n_k})}{12^{n_k-1}},
\end{equation*}
where the second inequality follows from $(\norm{\bm{X}_k}_{\rm F} + 1)^{1-\frac{1}{2^{n_k-1}}} \le 2$ and $\sqrt{\norm{\bm{X}_k}_{\rm F}} \ge 0$.
Therefore, regardless of the value of $\norm{\bm{X}_k}_{\rm F}$, we have
\begin{equation*}
\rho_k \le \frac{2(2 + a_{n_k} + b_{n_k} + c_{n_k})}{12^{n_k-1}},
\end{equation*}
and so $\liminf_{k\to \infty} \rho_k = 0$.
Since the sequences $(n_k)$ and $(\bm{X}_k)$ are arbitrary, we see that the error bound in \eqref{eq:computable_eb} is not asymptotically tight up to a dimension-free constant.

\section{Explicit error bounds for optimality system of SDPs}\label{subsec:optset_SDP}
Consider the following standard SDP
\begin{equation}
\begin{array}{c@{\quad}l}
\minimize_{\bm{X}\in \calS^n} & \langle \bm{C},\bm{X}\rangle \\
\subjectto &  {\cal A}(\bm{X})=\bm{b}, \ 
\bm{X} \in \mathcal{S}^n_+
\end{array} \label{eq:PSDP}\tag{PSDP}
\end{equation}
and its dual problem
\begin{equation}
\begin{array}{c@{\quad}l}
\maximize_{\bm{y}\in\mathbb{R}^m
} & \bm{b}^\top\bm{y} \\
\subjectto & \bm{C} - \mathcal{A}^*(\bm{y}) \in \calS_+^n,
\end{array} \label{eq:DSDP}\tag{DSDP}
\end{equation}
where $\bm{C} \in \mathcal{S}^n$, $\mathcal{A}\colon \mathcal{S}^n \to \mathbb{R}^m$ is a linear mapping, and $\bm{b} \in \mathbb{R}^m$. 
Denote the set of optimal solutions of \eqref{eq:PSDP} by $\calX^\star$, which we assume to be nonempty, and its optimal value by $\alpha^\star$.
Let
\begin{equation}
\calV^\star \coloneqq \{\bm{X}\in \calS^n \mid \calA(\bm{X})=\bm{b},\ \langle \bm{C},\bm{X}\rangle =\alpha^\star\}. \label{eq:Vstar}
\end{equation}
Then we have $\calX^\star = \calV^\star \cap \mathcal{S}^n_+$. 
It is worth noting that, even assuming the standard Slater's condition for \eqref{eq:PSDP}, that is, ${\cal A}^{-1}(\bm{b})\cap\setint \mathcal{S}^n_+\color{black}\neq\emptyset$,  the strict feasibility condition for the semidefinite feasibility problem describing the optimal solution set, that is $\Feas(\calV^\star,\calS_+^n)$, \emph{typically fails}.

Below, as an application, we provide an error bound result with an explicit estimate on the modulus of the radial-type error bound, under the commonly assumed strict complementarity condition.
We note that, in the special case where \eqref{eq:PSDP} has a unique solution (with other additional suitable assumptions), error estimates for feasible solutions can be deduced from the existing literature on the study of the conditioning of the so-called simple SDPs \cite{DYC+2021,DU2021} (see also \cite{NO1999}). See the discussions in Remark~\ref{rem:unique_sol_SDP} later for details.

Recall that for a feasible point $\bm{X}^\star$ of \eqref{eq:PSDP} and a feasible point $\bm{y}^\star$ of \eqref{eq:DSDP} with $\bm{S}^\star= {\bm C} - \mathcal{A}^*(\bm{y}^\star)$, they satisfy \emph{strict complementarity} if it follows that
\begin{equation}
\bm{X}^\star \in  \ri\calF^\star, \label{eq:strict}
\end{equation}
where $\calF^\star \coloneqq \calS_+^n \cap \{\bm{S}^\star\}^\perp$. 
In the above strict complementarity condition, $\bm{X}^\star$ and $\bm{y}^\star$ are indeed optimal solutions of \eqref{eq:PSDP} and \eqref{eq:DSDP}, respectively since \eqref{eq:strict} implies strong duality, i.e., $\langle \bm{C}, \bm{X}^\star\rangle = \bm{b}^\top \bm{y}^\star$.
In addition, by the strong duality between \eqref{eq:PSDP} and \eqref{eq:DSDP}, it follows that
\begin{equation*}
\calV^\star=\{\bm{X}\in \calS^n \mid \calA(\bm{X})=\bm{b},\ \langle \bm{S}^\star,\bm{X}\rangle = 0\}.
\end{equation*}
It can be readily verified that the above strict complementarity condition is equivalent to the statement that the sum of the ranks of $\bm{X}^\star$ and $\bm{S}^\star$ is equal to $n$.
The strict complementarity condition is a widely used condition in studying SDPs, and it is known that it holds generically~\cite{PT2002,AHO1997}.
Let $d$ be the rank of $\bm{S}^\star$.
We note that the face $\calF^\star$ is linearly isomorphic to $\calS_+^{n-d}$. Throughout this section, we always assume the following.
\begin{assumption}\label{asm:SDP}
\leavevmode
\begin{enumerate}[label=({A}\arabic*), ref={A}\arabic*]
\item The problem \eqref{eq:PSDP} and its dual \eqref{eq:DSDP} admit a pair of feasible points $(\bm{X}^\star,\bm{y}^\star)$ with $\bm{S}^\star= {\bm C} - \mathcal{A}^*(\bm{y}^\star)$ such that the strict complementarity condition \eqref{eq:strict} holds. \label{enum:strict_comp}
\item For the matrix $\bm{S}^\star= {\bm C} - \mathcal{A}^*(\bm{y}^\star)$, we assume that $\bm{S}^\star \neq \bm{O}$ (that is, the rank $d$ of $\bm{S}^\star$ is at least $1$).
\end{enumerate}
\end{assumption}

Note that if $\bm{S}^\star=\bm{O}$, then $\calV^\star=\{\bm{X}\in \calS^n \mid \calA(\bm{X})=\bm{b}\}$, and the assumption in \eqref{enum:strict_comp} implies that $\bm{X}^\star \in \calV^\star \cap \setint \mathcal{S}^n_+$.
Thus, the optimal solution set satisfies Slater's condition, and an explicit Lipschitz error bound holds (for example, by the case $d^*=0$ in Corollary \ref{cor:error_bound_constant}).
Therefore,  below
we only consider the case where $\bm{S}^\star \neq \bm{O}$. 

To formulate the explicit error bound, we first define
\begin{align}
\gamma &\coloneqq \frac{\norm{\bm{S}^\star}_{\rm F} + 1}{\lambda_{\rm min}^+(\bm{S}^\star)} + 1, \nonumber \\
\alpha &\coloneqq \sqrt{1 + 2(\gamma + 1)(n-d) + \gamma^2}, \label{eq:alpha_appl}\\
\beta  &\coloneqq \sqrt{2(\gamma + 1)\sqrt{n-d}}. \label{eq:beta_appl}
\end{align} 
{Let $\calM\colon \calS^n \to \bbR^p$ be a linear mapping with $p \ge 1$ whose kernel is $\setspan\calF^\star$.}
We define linear mappings $\overline{\calA}\colon \calS^n \to \bbR^{m+1}$ and $\widehat{\calA}\colon \calS^n \to \bbR^{m+p+1}$ respectively by
\begin{equation}
\overline{\calA}(\bm{X}) \coloneqq \begin{pmatrix}
\calA(\bm{X})\\
\langle \bm{S}^\star, \bm{X} \rangle
\end{pmatrix} \text{ and } \widehat{\calA}(\bm{X}) \coloneqq \begin{pmatrix}
\calA(\bm{X})\\
\calM(\bm{X}) \\
\langle \bm{S}^\star, \bm{X} \rangle 
\end{pmatrix}.  \label{eq:barA}
\end{equation}
By using the linear mapping $\overline{\calA}$, the affine space $\calV^\star$ can be represented as
\begin{equation}
\calV^\star = \left\{\bm{X}\in \calS^n \relmiddle| \overline{\calA}(\bm{X}) = \begin{pmatrix}
\bm{b}\\
0
\end{pmatrix}\right\}. \label{eq:Vstar_general}
\end{equation}  
Let $\sigma_{\rm min}^+(\overline{\calA})$ and  $\sigma_{\rm min}^+(\widehat{\calA})$ be the smallest positive singular values of $\overline{\calA}$ and $\widehat{\calA}$, respectively.\footnote{The linear mappings $\overline{\calA}$ and $\widehat{\calA}$ each have at least one positive singular value since the matrix $\bm{S}^\star$ is nonzero.} 
Recall that $\bbR\bm{S}^\star$ is the line along the matrix $\bm{S}^\star$, and, for a linear mapping $\mathcal{L}$ between two finite-dimensional normed spaces, we use $\norm{\mathcal{L}}_{\rm op}$ and $\norm{\mathcal{L}}_{\rm HS}$ to denote the operator norm of $\mathcal{L}$ and the Hilbert--Schmidt norm of $\mathcal{L}$, respectively. Finally, the function $\overline{\kappa}(\rho;\bm{X}^\star)$ with $\bm{X}^\star \in \calV^\star \cap \ri\calF^\star$ is defined as in \eqref{eq:def_barkappa} {for $\mathcal{V}=\calV^\star$ and $\calF_{d^*+1}=\calF^\star$, that is,
\begin{align}
\label{eq:kappa_bar}\overline{\kappa}(\rho;\bm{X}^\star) 
&= \max\{\eta(\calV^\star,\setspan\calF^\star),1\} \left(1 + 2\frac{\norm{\bm{X}^\star}_{\rm F} + \rho}{\lambda_{\rm min}^+(\bm{X}^\star)}\right) \\
& \le \frac{ \|\widehat{\calA}\|_{\rm HS}}{\sigma_{\rm min}^+(\widehat{\calA})}  \left(1 + 2\frac{\norm{\bm{X}^\star}_{\rm F} + \rho}{\lambda_{\rm min}^+(\bm{X}^\star)}\right) \nonumber,
\end{align}
where the inequality follows from \eqref{enum:eta_equality} of Lemma \ref{lem:Hoffman} and the fact that the Hilbert--Schmidt norm of a linear operator coincides with the Frobenius norm of its matrix representation.}

\begin{theorem}[\textbf{Error bounds for optimality system of SDPs with strict complementarity}]\label{KLG}
Consider \eqref{eq:PSDP} and its dual problem \eqref{eq:DSDP}.
Suppose that Assumption~\ref{asm:SDP} holds.
For each $\bm{X} \in \mathcal{S}^n$, let the optimality measure $\epsilon_{{\rm OM}}(\bm{X})$ be given by 
\begin{equation}
\epsilon_{{\rm OM}}(\bm{X}) \coloneqq \max\{|\langle \bm{S}^\star,\bm{X} \rangle|, \|\mathcal{A}(\bm{X})-\bm{b}\|_2,\dist(\bm{X},\mathcal{S}^n_+)\}. \label{eq:epsilonOM}
\end{equation}
\begin{enumerate}[label=(\roman*), ref=\roman*, font=\upshape]
\item For all $\bm{X} \in \mathcal{S}^n$, we have  
\begin{equation*}
\dist(\bm{X}, \calX^\star) \le \alpha_{\rm SDP}(\|\bm{X}\|_{\rm F}) \epsilon_{\rm OM}(\bm{X})+ \beta_{\rm SDP}(\|\bm{X}\|_{\rm F}) \sqrt{\epsilon_{\rm OM}(\bm{X})},
\end{equation*}
where $\alpha_{\rm SDP}(\cdot)$ and $\beta_{\rm SDP}(\cdot)$ are defined as
\begin{align}
\alpha_{\rm SDP}(\rho) &\coloneqq \overline{\kappa}(\rho;\bm{X}^\star)\max\left\{\frac{1}{\sigma_{\rm min}^+(\overline{\calA})}\left(1+\frac{\|\mathcal{A}|_{\bbR\bm{S}^\star}\|_{\rm op}}{\|\bm{S}^\star\|_{\rm F}}\right), \alpha+ \frac{1}{\|\bm{S}^\star\|_{\rm F}}\right\} +\frac{1}{\|\bm{S}^\star\|_{\rm F}}, \label{eq:alphaSDP}\\
\beta_{\rm SDP}(\rho) &\coloneqq \beta \sqrt{\rho} \ \overline{\kappa}(\rho;\bm{X}^\star) , \label{eq:betaSDP}
\end{align}
respectively, and $\overline{\kappa}$, $\alpha$, and $\beta$ are given as in \eqref{eq:kappa_bar}, 
\eqref{eq:alpha_appl}, and \eqref{eq:beta_appl}, respectively. \label{enum:infeasible_X}
\item For all $\bm{X} \in \calA^{-1}(\bm{b}) \cap \calS_+^n$, we have
\begin{equation*}
\dist(\bm{X},\calX^\star) \le \overline{\alpha}_{\rm SDP}(\|\bm{X}\|_{\rm F})(\langle \bm{C}, \bm{X} \rangle -\alpha^\star)+ \beta_{\rm SDP}(\|\bm{X}\|_{\rm F}) \sqrt{\langle \bm{C}, \bm{X} \rangle -\alpha^\star},
\end{equation*}
where 
\begin{equation*}
\overline{\alpha}_{\rm SDP}(\rho) \coloneqq \overline{\kappa}(\rho;\bm{X}^\star) \max\left\{ \frac{\|\mathcal{A}|_{\bbR\bm{S}^\star}\|_{\rm op}}{\sigma_{\rm min}^+(\overline{\calA})\|\bm{S}^\star\|_{\rm F}}, \alpha+\frac{1}{\|\bm{S}^\star\|_{\rm F}}\right\} +\frac{1}{\|\bm{S}^\star\|_{\rm F}}.
\end{equation*}
\label{enum:feasible_X}
\end{enumerate}
\end{theorem}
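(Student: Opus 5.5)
The plan is to avoid invoking the full facial-reduction machinery and instead run the two-step argument behind Theorem~\ref{thm:kappaB} and Lemma~\ref{lem:1FRF_partialPD} directly on the feasibility problem $\Feas(\calV^\star,\calS_+^n)$. First, I record the relevant structure. By strong duality, $\calV^\star$ is given by \eqref{eq:Vstar_general}, so $\bm{S}^\star$ is orthogonal to the direction space of $\calV^\star$, and $\langle \bm{S}^\star,\bm{Y}\rangle = 0$ for all $\bm{Y}\in\calV^\star$. Also $\calV^\star\cap\calS_+^n=\calV^\star\cap\calF^\star$, and by \eqref{enum:strict_comp} we have $\bm{X}^\star\in\calV^\star\cap\ri\calF^\star$. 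Hence Proposition~\ref{prop:eb_reduced}, together with the bound \eqref{eq:bar_theta}, applies with $\calF_{d^*+1}=\calF^\star$ and $\bm{U}_0=\bm{X}^\star$:
\begin{equation*}
\dist(\bm{Y},\calX^\star)\le \overline{\kappa}(\norm{\bm{Y}}_{\rm F};\bm{X}^\star)\max\{\dist(\bm{Y},\calV^\star),\dist(\bm{Y},\calF^\star)\} \quad\text{for all } \bm{Y}\in\calS^n .
\end{equation*}
Since $\bm{S}^\star\succeq 0$ has rank $d$, after an orthogonal change of basis it has the form $\Diag(\bm{O},\bm{\Lambda})$ with $r=n-d$. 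Thus Lemma~\ref{lem:1FRF_partialPD}, applied with $\bm{Z}=\bm{S}^\star$ (not normalized), yields exactly the constants $\gamma,\alpha,\beta$ in \eqref{eq:alpha_appl} and \eqref{eq:beta_appl}, with $\calS_+^n\cap\{\bm{S}^\star\}^\perp=\calF^\star$.

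Second, to handle the fact that $\langle\bm{S}^\star,\bm{X}\rangle$ may be negative or nonzero, I plan to replace $\bm{X}$ by its projection onto the hyperplane $\{\bm{S}^\star\}^\perp$. Write $\epsilon=\epsilon_{\rm OM}(\bm{X})$ and define
\begin{equation*}
\bm{X}'\coloneqq\bm{X}-t\bm{S}^\star, \qquad t\coloneqq \langle\bm{S}^\star,\bm{X}\rangle/\norm{\bm{S}^\star}_{\rm F}^2 .
\end{equation*}
Then $\norm{\bm{X}-\bm{X}'}_{\rm F}\le \epsilon/\norm{\bm{S}^\star}_{\rm F}$, $\norm{\bm{X}'}_{\rm F}\le\norm{\bm{X}}_{\rm F}$ by nonexpansiveness, and $\langle\bm{S}^\star,\bm{X}'\rangle=0$. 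Because $\overline{\kappa}$ is nondecreasing, $\dist(\bm{X},\calX^\star)\le \epsilon/\norm{\bm{S}^\star}_{\rm F}+\overline{\kappa}(\norm{\bm{X}}_{\rm F};\bm{X}^\star)\max\{\dist(\bm{X}',\calV^\star),\dist(\bm{X}',\calF^\star)\}$.

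Third, I bound the two distances. For the affine part, \eqref{eq:Hoffman_eq} applied to $\overline{\calA}$ gives $\dist(\bm{X}',\calV^\star)\le \norm{\calA(\bm{X}')-\bm{b}}_2/\sigma_{\rm min}^+(\overline{\calA})$, since the last component of $\overline{\calA}(\bm{X}')-(\bm{b},0)$ vanishes. Moreover, $\norm{\calA(\bm{X}')-\bm{b}}_2\le\norm{\calA(\bm{X})-\bm{b}}_2+\abs{t}\,\norm{\calA(\bm{S}^\star)}_2 \le \epsilon\bigl(1+\norm{\calA|_{\bbR\bm{S}^\star}}_{\rm op}/\norm{\bm{S}^\star}_{\rm F}\bigr)$. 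For the conic part, $\dist(\bm{X}',\calF^\star)\le\dist(\bm{X},\calF^\star)+\epsilon/\norm{\bm{S}^\star}_{\rm F}$. Lemma~\ref{lem:1FRF_partialPD} applies to $\bm{X}$ itself, since $\dist(\bm{X},\calS_+^n)\le\epsilon$ and $\langle\bm{S}^\star,\bm{X}\rangle\le\epsilon$, giving $\dist(\bm{X},\calF^\star)\le\alpha\epsilon+\beta\sqrt{\epsilon\norm{\bm{X}}_{\rm F}}$. Using $\max\{a,b+c\}\le\max\{a,b\}+c$ for $c\ge 0$ then produces exactly $\alpha_{\rm SDP}$ and $\beta_{\rm SDP}$ in \eqref{eq:alphaSDP} and \eqref{eq:betaSDP}. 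The main delicacy is bookkeeping: applying the one-step facial residual function to $\bm{X}$ rather than to $\bm{X}'$, so that the square-root term involves $\norm{\bm{X}}_{\rm F}$ with constant $\beta$, while the shift contributes only the linear terms $1/\norm{\bm{S}^\star}_{\rm F}$.

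For \eqref{enum:feasible_X}, take $\bm{X}\in\calA^{-1}(\bm{b})\cap\calS_+^n$. Then $\langle\bm{S}^\star,\bm{X}\rangle=\langle\bm{C},\bm{X}\rangle-\bm{b}^\top\bm{y}^\star=\langle\bm{C},\bm{X}\rangle-\alpha^\star\ge 0$ by strong duality, and $\calA(\bm{X})=\bm{b}$, $\dist(\bm{X},\calS_+^n)=0$. So $\epsilon_{\rm OM}(\bm{X})=\langle\bm{C},\bm{X}\rangle-\alpha^\star$. Repeating the argument, the residual becomes $\calA(\bm{X}')-\bm{b}=-t\calA(\bm{S}^\star)$, so the leading ``$1+$'' in the first entry of the maximum disappears, which gives $\overline{\alpha}_{\rm SDP}$.
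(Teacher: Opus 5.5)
Your proposal is correct and follows essentially the same route as the paper. You project $\bm{X}$ onto $\{\bm{S}^\star\}^\perp$, apply Proposition~\ref{prop:eb_reduced} with $\overline{\kappa}(\cdot;\bm{X}^\star)$, bound the affine part via \eqref{eq:Hoffman_eq} for $\overline{\calA}$, and bound the conic part by applying the one-step facial residual function to $\bm{X}$ itself plus the shift $\epsilon_{\rm OM}(\bm{X})/\norm{\bm{S}^\star}_{\rm F}$; the only cosmetic difference is that you use Lemma~\ref{lem:1FRF_partialPD} directly after diagonalizing $\bm{S}^\star$, whereas the paper cites Proposition~\ref{prop:1FRF}, which amounts to the same computation.
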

\begin{proof}
First, we prove \eqref{enum:infeasible_X}.
Let $\bm{X} \in \calS^n$.
We decompose it as 
\begin{equation*}
\bm{X}= \underbrace{\bm{X}-\frac{\langle \bm{S}^\star,\bm{X}\rangle}{\|\bm{S}^\star\|_{\rm F}^2}\bm{S}^\star}_{=P_{\{\bm{S}^\star\}^\perp}(\bm{X})} + \underbrace{\frac{\langle \bm{S}^\star,\bm{X}\rangle}{\|\bm{S}^\star\|_{\rm F}^2}\bm{S}^\star}_{=P_{\bbR\bm{S}^\star}(\bm{X})}.
\end{equation*}
Then we have
\begin{align}
&\dist(\bm{X},\calX^\star) \nonumber\\
\overset{\scriptsize \text{(a)}}\le{}& \dist (P_{\{\bm{S}^\star\}^\perp}(\bm{X}),\calX^\star) + \frac{\epsilon_{\rm OM}(\bm{X})}{\|\bm{S}^\star\|_{\rm F}}\nonumber\\
\overset{\scriptsize \text{(b)}}={} & \dist(P_{\{\bm{S}^\star\}^\perp}(\bm{X}), \calV^\star \cap \calF^\star) + \frac{\epsilon_{\rm OM}(\bm{X})}{\|\bm{S}^\star\|_{\rm F}}\nonumber\\
\overset{\scriptsize \text{(c)}}\le{} & \overline{\kappa}(\|P_{\{\bm{S}^\star\}^\perp}(\bm{X})\|_{\rm F};\bm{X}^\star)\max\{\dist(P_{\{\bm{S}^\star\}^\perp}(\bm{X}),\calV^\star), \dist(P_{\{\bm{S}^\star\}^\perp}(\bm{X}),\calF^\star)\}+ \frac{\epsilon_{\rm OM}(\bm{X})}{\|\bm{S}^\star\|_{\rm F}} \nonumber\\
\overset{\scriptsize \text{(d)}}\le{} & \overline{\kappa}(\|\bm{X}\|_{\rm F};\bm{X}^\star) \max\left\{\frac{\|\mathcal{A}(P_{\{\bm{S}^\star\}^\perp}(\bm{X}))-\bm{b}\|_2}{\sigma_{\rm min}^+(\overline{\calA})}, \dist(P_{\{\bm{S}^\star\}^\perp}(\bm{X}),\calF^\star)\right\} + \frac{\epsilon_{\rm OM}(\bm{X})}{\|\bm{S}^\star\|_{\rm F}}\nonumber \\
\overset{\scriptsize \text{(e)}}\le{} & \overline{\kappa}(\|\bm{X}\|_{\rm F};\bm{X}^\star) \max\left\{ \frac{1}{\sigma_{\rm min}^+(\overline{\calA})} \left(\|\mathcal{A}(\bm{X})-\bm{b}\|_2 + \|\mathcal{A}|_{\bbR\bm{S}^\star}\|_{\rm op}\frac{\epsilon_{\rm OM}(\bm{X})}{\|\bm{S}^\star\|_{\rm F}}\right), \dist(P_{\{\bm{S}^\star\}^\perp}(\bm{X}),\calF^\star)\right\} + \frac{\epsilon_{\rm OM}(\bm{X})}{\|\bm{S}^\star\|_{\rm F}}\label{eq:use0} \\
\overset{\scriptsize \text{(f)}}\le{} & \overline{\kappa}(\|\bm{X}\|_{\rm F};\bm{X}^\star) \max\left\{\frac{\epsilon_{\rm OM}(\bm{X})}{\sigma_{\rm min}^+(\overline{\calA})} \left(1 + \frac{\|\mathcal{A}|_{\bbR\bm{S}^\star}\|_{\rm op}}{\|\bm{S}^\star\|_{\rm F}}\right), \dist(P_{\{\bm{S}^\star\}^\perp}(\bm{X}),\calF^\star)\right\} + \frac{\epsilon_{\rm OM}(\bm{X})}{\|\bm{S}^\star\|_{\rm F}}, \label{eq:use1}
\end{align}
where (a) follows from the triangular inequality and the definition of $\epsilon_{\rm OM}(\bm{X})$, (b) follows from $\calX^\star = \calV^\star \cap \calF^\star$, (c) is obtained by applying Proposition~\ref{prop:eb_reduced} with $\mathcal{V}=\calV^\star$ and $\calF_{d^*+1}=\calF^\star$ and then using the inequality $\kappa(\norm{P_{\{\bm{S}^\star\}^\perp}(\bm{X})}_{\rm F}) \le \overline{\kappa}(\norm{P_{\{\bm{S}^\star\}^\perp}(\bm{X})}_{\rm F};\bm{X}^\star)$, (d) follows from \eqref{eq:Hoffman_eq}, \eqref{eq:Vstar_general}, and the nonexpansiveness of the projection $P_{\{\bm{S}^\star\}^\perp}$, (e) follows from the triangular inequality and the definition of $\epsilon_{\rm OM}(\bm{X})$, and (f) also follows from the definition of $\epsilon_{\rm OM}(\bm{X})$.
On the quantity $\dist(P_{\{\bm{S}^\star\}^\perp}(\bm{X}),\calF^\star)$ that appears in \eqref{eq:use1}, it follows from Proposition~\ref{prop:1FRF} with $\epsilon=\epsilon_{\rm OM}(\bm{X})$ that   
\begin{align}
\dist(P_{\{\bm{S}^\star\}^\perp}(\bm{X}),\calF^\star) &= \dist (P_{\{\bm{S}^\star\}^\perp}(\bm{X}), \calS_+^n \cap \{\bm{S}^\star\}^\perp) \nonumber \\
&\le \dist (\bm{X}, \calS_+^n \cap \{\bm{S}^\star\}^\perp) + \|\bm{X}-P_{\{\bm{S}^\star\}^\perp}(\bm{X})\|_{\rm F} \nonumber\\
& \le \left(\alpha + \frac{1}{\|\bm{S}^\star\|_{\rm F}}\right)\epsilon_{\rm OM}(\bm{X}) + {\beta}\sqrt{\epsilon_{\rm OM}(\bm{X})\lVert\bm{X}\rVert_{\rm F}},\label{eq:dist_PSperpX_Fstar}
\end{align}
where $\alpha$ and $\beta$ are given as in \eqref{eq:alpha_appl} and \eqref{eq:beta_appl}, respectively.
By \eqref{eq:use1} and \eqref{eq:dist_PSperpX_Fstar}, we obtain
\begin{align*}
&\dist(\bm{X},\calX^\star) \\
\le{}& \overline{\kappa}(\|\bm{X}\|_{\rm F};\bm{X}^\star) \max\left\{\frac{\epsilon_{\rm OM}(\bm{X})}{\sigma_{\rm min}^+(\overline{\calA})}\left(1+\frac{\|\mathcal{A}|_{\bbR\bm{S}^\star}\|_{\rm op}}{\|\bm{S}^\star\|_{\rm F}}\right), \left(\alpha + \frac{1}{{\|\bm{S}^\star\|_{\rm F}}}\right) \epsilon_{\rm OM}(\bm{X}) +  \beta \sqrt{\epsilon_{\rm OM}(\bm{X}) \|\bm{X}\|_{\rm F}}\right\} + \frac{\epsilon_{\rm OM}(\bm{X})}{\|\bm{S}^\star\|_{\rm F}}\\
\le{}& \alpha_{\rm SDP}(\|\bm{X}\|_{\rm F}) \epsilon_{\rm OM}(\bm{X})+ \beta_{\rm SDP}(\|\bm{X}\|_{\rm F}) \sqrt{\epsilon_{\rm OM}(\bm{X})}.
\end{align*}

Next, we prove \eqref{enum:feasible_X}.
Let $\bm{X} \in \mathcal{A}^{-1}(\bm{b}) \cap \mathcal{S}^n_+$.
Then it follows from $\bm{X} \in \mathcal{A}^{-1}(\bm{b}) \cap \mathcal{S}^n_+$ and $\bm{S}^\star \in \mathcal{S}^n_+$ that $\epsilon_{\rm OM}(\bm{X}) = \langle \bm{C}, \bm{X}\rangle -\alpha^\star$.
Moreover, by $\bm{X} \in \mathcal{A}^{-1}(\bm{b}) \cap \mathcal{S}^n_+$, \eqref{eq:use0} can be simplified as
\begin{equation*}
\dist(\bm{X},\calX^\star) \le \overline{\kappa}(\|\bm{X}\|_{\rm F};\bm{X}^\star) \max\left\{ \frac{\|\mathcal{A}|_{\bbR\bm{S}^\star}\|_{\rm op}\epsilon_{\rm OM}(\bm{X})}{\sigma_{\rm min}^+(\overline{\calA})\|\bm{S}^\star\|_{\rm F}}, \dist(P_{\{\bm{S}^\star\}^\perp}(\bm{X}),\calF^\star)\right\} + \frac{\epsilon_{\rm OM}(\bm{X})}{\|\bm{S}^\star\|_{\rm F}}.
\end{equation*}
Combining this inequality with \eqref{eq:dist_PSperpX_Fstar}, we obtain the desired result.
\end{proof}

\begin{remark}[\textbf{Links to existing works}]\label{rem:unique_sol_SDP}
We note that, in addition to Assumption~\ref{asm:SDP}, if we further assume that \eqref{eq:PSDP} has a unique optimal solution and ${\cal A}^{-1}(\bm{b})\cap\setint \mathcal{S}^n_+\color{black}\neq\emptyset$ (that is, standard Slater's condition holds for \eqref{eq:PSDP}), similar error estimates for feasible solutions can be deduced from the existing literature on the study of the conditioning of the so-called simple SDPs~\cite{DYC+2021,DU2021}. 

More precisely, suppose that \eqref{eq:PSDP} has a unique optimal solution, i.e., $\calX^\star = \{\bm{X}^\star\}$. 
Recall that $\calF^\star=\mathcal{S}^n_+ \cap \{\bm{S}^\star\}^\perp$.
Then the mapping $\mathcal{A}|_{\setspan\calF^\star}$ is injective.
(Otherwise, we can take $\bm{U} \in (\setspan\calF^\star)\setminus \{\bm{O}\}$ such that $\calA(\bm{U})$ is zero.
Then the matrix $\bm{X}(t)\coloneqq\bm{X}^\star + t\bm{U}$ defined for every $t>0$ belongs to $\mathcal{A}^{-1}(\bm{b}) \cap  \setspan\calF^\star$.
As $\bm{X}^\star \in \ri \calF^\star$, for sufficiently small $t>0$, we have $\bm{X}(t) \in \calF^\star$.
This implies that $\bm{X}(t) \in \calX^\star$, which contradicts the assumption that $\calX^\star$ is a singleton.)
Thus, the minimum singular value of $\mathcal{A}|_{\setspan\calF^\star}$, which is denoted by $\sigma_{\rm min}(\mathcal{A}|_{\setspan\calF^\star})$ and calculated as
\begin{equation}
\sigma_{\rm min}(\mathcal{A}|_{\setspan\calF^\star}) = \min\{\|\mathcal{A}(\bm{X})\|_2 \mid \bm{X}\in \setspan\calF^\star,\ \|\bm{X}\|_{\rm F}=1\}, \label{eq:min_singlarval}
\end{equation}
is positive.

Let $\bm{X} \in \mathcal{A}^{-1}(\bm{b}) \cap \mathcal{S}_+^n$ be arbitrary.
It follows that 
\begin{align}
\dist(\bm{X},\calX^\star) &\overset{\scriptsize \text{(a)}}= \norm{\bm{X} - \bm{X}^\star}_{\rm F} \nonumber \\
&\le \norm{P_{\setspan\calF^\star}(\bm{X}) - \bm{X}^\star}_{\rm F} + \|\bm{X}-P_{\setspan\calF^\star}(\bm{X})\|_{\rm F} \nonumber \\
&\overset{\scriptsize \text{(b)}}\le \frac{\|\mathcal{A}(P_{\setspan\calF^\star}(\bm{X})) -\bm{b}\|_2}{\sigma_{\rm min}(\mathcal{A}|_{\setspan\calF^\star})} + \|\bm{X}-P_{\setspan\calF^\star}(\bm{X})\|_{\rm F} \nonumber \\
&\overset{\scriptsize \text{(c)}}= \frac{\|\mathcal{A}(\bm{X}-P_{{\setspan\calF^\star}}(\bm{X}))\|_2}{\sigma_{\rm min}(\mathcal{A}|_{\setspan\calF^\star})} + \|\bm{X}-P_{\setspan\calF^\star}(\bm{X})\|_{\rm F} \nonumber \\
& \le \left(1+\frac{\|\mathcal{A}|_{(\setspan\calF^\star)^{\perp}}\|_{\rm op}}{\sigma_{\rm min}(\mathcal{A}|_{\setspan\calF^\star})} \right)\|\bm{X}-P_{\setspan\calF^\star}(\bm{X})\|_{\rm F}, \label{eq:dist_X_uniqueoptsol}
\end{align}
where we use the assumption that $\calX^\star = \{\bm{X}^\star\}$ to derive (a), use \eqref{eq:min_singlarval} and $\calA(\bm{X}^\star) = \bm{b}$ to derive (b), use $\calA(\bm{X}) = \bm{b}$ to derive (c).
Now, from \cite[Lemma~4.3]{DYC+2021} and $\bm{X} \in \mathcal{S}^n_{+}$, it follows that
\begin{equation}
\|\bm{X}-P_{\setspan\calF^\star}(\bm{X})\|_{\rm F} \le \frac{\langle\bm{S}^\star,\bm{X}\rangle}{\lambda_{\rm min}^+(\bm{S}^\star)} + \sqrt{2 \frac{\langle \bm{S}^\star,\bm{X}\rangle}{\lambda_{\rm min}^+(\bm{S}^\star)} \lambda_{\max}(\bm{X})}. \label{eq:norm_X_minus_PspanFX}
\end{equation}
By \eqref{eq:dist_X_uniqueoptsol} and \eqref{eq:norm_X_minus_PspanFX}, we obtain
\begin{equation*}
\dist(\bm{X}, \calX^\star) \le \left(1+\frac{\|\mathcal{A}|_{(\setspan\calF^\star)^{\perp}}\|_{\rm op}}{\sigma_{\rm min}(\mathcal{A}|_{\setspan\calF^\star})}\right)\left(\frac{\epsilon_{\rm OM}(\bm{X})}{\lambda_{\rm min}^+(\bm{S}^\star)}+\sqrt{2 \frac{\epsilon_{\rm OM}(\bm{X})}{\lambda_{\rm min}^+(\bm{S}^\star)} \|\bm{X}\|_{\rm F}}\right).
 \end{equation*}

Different from the error estimate provided in \eqref{enum:feasible_X} of Theorem~\ref{KLG}, the above estimate relies heavily on the fact that $\sigma_{\rm min}(\mathcal{A}|_{\setspan\calF^\star}) > 0$, which holds due to the uniqueness of the optimal solution of \eqref{eq:PSDP}.
\end{remark}

In the following example, we illustrate the error bound shown in \eqref{enum:infeasible_X}.
For this instance, we see that the optimal solution set of the primal problem is not a singleton (and so, the results in the existing literature mentioned above are not applicable). Moreover, this example shows that the error bound can be asymptotically tight up to a dimension-free constant.

\begin{example}[\textbf{Asymptotic tightness for explicit error bound for the optimality system of SDP}]\label{ex:infeasible_X} 
For $n \in \bbZ_{\ge 3}$, we consider the following SDP:
\begin{equation}
{\everymath{\displaystyle}
\begin{array}{c@{\quad}l}
\minimize_{\bm{X} \in \mathcal{S}^n} & \sum_{i=3}^n X_{ii}\\
\subjectto & \sum_{i=1}^n X_{ii}=1, \ 
\bm{X} \in \mathcal{S}^n_+.
\end{array}}
\label{prob:corr_mat}
\end{equation}
By letting $\bm{C} \coloneqq  \Diag(0,0,1,\dots,1)$, $\calA(\bm{X}) \coloneqq \sum_{i=1}^n X_{ii}$, and $b \coloneqq 1$, \eqref{eq:PSDP} reduces to \eqref{prob:corr_mat}.
The dual problem of Problem~\eqref{prob:corr_mat} can be written as 
\begin{equation*}
{\everymath{\displaystyle}
\begin{array}{c@{\quad}l}
\maximize_{y\in\mathbb{R}} & y\\
\subjectto & \Diag(-y,-y,1-y,\dots,1-y) \in \calS_+^n.
\end{array}}
\end{equation*}
It can be seen that an optimal solution of the primal problem is $\bm{X}^\star \coloneqq \Diag(1/2,1/2,0,\dots,0)$, and the unique optimal solution of the dual problem is $y^\star = 0$ with the corresponding slack matrix $\bm{S}^\star = \Diag(0,0,1,\dots,1)$.
The optimal value of these problems is $\alpha^\star = 0$.
We note that the matrix $\bm{X}^\star$ is the same as that used in Section~\ref{sec:dPPS_1}.
For Problem~\eqref{prob:corr_mat}, by $\alpha^\star = 0$, the affine space $\calV_n^\star$ corresponding to \eqref{eq:Vstar} can be written as
\begin{equation*}
\calV_n^\star = \left\{\bm{A} \in \calS^n \relmiddle| \sum_{i=1}^n A_{ii} = 1,\ \sum_{i=3}^n A_{ii} = 0\right\}.
\end{equation*}
We note that the affine space $\calV_n^\star$ is the same as that introduced in \eqref{eq:dPPS=1_V}.
The set $\calX^{\star} = \calV_n^\star \cap \calS_+^n$ of optimal solutions of the primal problem is
\begin{equation*}
\{\Diag(\bm{A},\bm{O}) \in \calS^n \mid \bm{A}\in \calS_+^2,\ A_{11} + A_{22} = 1\},
\end{equation*}
and the primal problem has nonunique optimal solutions.
The optimal solution $\bm{X}^\star$ of the primal problem and the optimal solution $y^\star$ of the dual problem satisfy the strict complementarity condition in \eqref{eq:strict}.
Using $\bm{X}^\star$, we see from \eqref{enum:infeasible_X} of Theorem~\ref{KLG} that
\begin{align}
\dist(\bm{X},\calX^\star) &\le \left(\sqrt{2}(1 + 2\sqrt{2} + 4\norm{\bm{X}}_{\rm F})\left(\sqrt{n + 8\sqrt{n-2} + 15} + \frac{1}{\sqrt{n-2}}\right) + \frac{1}{\sqrt{n-2}}\right)\epsilon_{\rm OM}(\bm{X}) \nonumber \\
&\quad + 2\sqrt{\sqrt{2(n-2)} + 3\sqrt{2}}(1+2\sqrt{2}+4\norm{\bm{X}}_{\rm F})\sqrt{\norm{\bm{X}}_{\rm F}}\sqrt{\epsilon_{\rm OM}(\bm{X})} \text{ for all $\bm{X} \in \calS^n$,} \label{eq:eb_infeasible_ex}
\end{align}
where $\epsilon_{\rm OM}(\bm{X})$ defined in \eqref{eq:epsilonOM} is
\begin{equation*}
\epsilon_{\rm OM}(\bm{X}) = \max\left\{\left|\sum_{i=3}^n X_{ii}\right|, \left|\sum_{i=1}^n X_{ii} - 1\right|, \dist(\bm{X},\calS_+^n)\right\}
\end{equation*}
and see Appendix~\ref{apdx:alphaSDP_betaSDP} for the calculations of $\alpha_{\rm SDP}(\rho)$ and $\beta_{\rm SDP}(\rho)$ defined in \eqref{eq:alphaSDP} and \eqref{eq:betaSDP}, respectively.
In what follows, we show that the error bound in \eqref{eq:eb_infeasible_ex} is asymptotically tight up to a dimension-free constant.

For every $k\in \bbZ_{\ge 1}$, let $n_k \coloneqq k + 3$ and $\bm{X}_k$ be the matrix defined in \eqref{eq:Xk_infeasible}.
As calculated in \eqref{eq:lhs_eb_infeasible}, the left-hand side of \eqref{eq:eb_infeasible_ex} with $\bm{X} = \bm{X}_k$ is
\begin{equation*}
\dist(\bm{X}_k,\calX^\star) = \dist(\bm{X}_k,\calV_{n_k} \cap \calS_+^{n_k}) = \sqrt{\frac{k+1}{k}} \to 1 \qquad (\text{as}\,\,k\to \infty).
\end{equation*}
We then calculate the right-hand side of \eqref{eq:eb_infeasible_ex}.
It can be seen that $\abs{\sum_{i=3}^{n_k}(\bm{X}_k)_{ii}} = 0$ and $\abs{\sum_{i=1}^{n_k}(\bm{X}_k)_{ii} - 1} = 0$.
In addition, it follows that $\dist(\bm{X}_k,\calS_+^{n_k}) = 1/\sqrt{k}$ as seen in Section~\ref{sec:dPPS_1}.
By combining them, we have $\epsilon_{\rm OM}(\bm{X}_{k}) = 1/\sqrt{k}$, and the right-hand side of \eqref{eq:eb_infeasible_ex} with $\bm{X}=\bm{X}_{k}$ {and $n=n_k$} is
\begin{align*}
\left(\sqrt{2}\left(1 + 2\sqrt{2} + 4\sqrt{\frac{2k+1}{k}}\right)\left(\sqrt{k + 8\sqrt{k+1} + 18} + \frac{1}{\sqrt{k+1}}\right) + \frac{1}{\sqrt{k+1}}\right)\frac{1}{\sqrt{k}}&\\
+ 2\sqrt{\sqrt{2(k+1)} + 3\sqrt{2}}\left(1+2\sqrt{2}+4\sqrt{\frac{2k+1}{k}}\right)\sqrt[4]{\frac{2k+1}{k}}\frac{1}{\sqrt[4]{k}} & \to 3(12+\sqrt{2})\qquad (\text{as}\,\,k\to \infty).
\end{align*}
Thus, the error bound in \eqref{eq:eb_infeasible_ex} is asymptotically tight up to the dimension-free constant $\frac{1}{3(12+\sqrt{2})}$.
\end{example}

\section{Conclusions and remarks}\label{sec:conclusion}
In this paper, we investigated radial-type H\"{o}lder error bounds for semidefinite feasibility problems $\Feas(\calV_n,\calS_+^n)$ without assuming {any constraint qualifications}.
The final objective was to furnish error bounds for which the associated constants have explicit expressions. This was 
accomplished via facial reduction and the framework of facial residual functions~\cite{LLP2023}.

Furthermore, we introduced the notion of asymptotic tightness of error bounds and analyzed the asymptotic tightness of the qualitative bounds derived in this paper in terms of $d^* = d_{\rm PPS}(\calV_n,\calS_+^n)$, the distance to the PPS condition of $\Feas(\calV_n,\calS_+^n)$.
Our analysis showed that the derived radial-type error bounds can be asymptotically tight up to a dimension-free constant factor, when $d^*$ equals $0$ or $1$.
We also discussed the more challenging case where $d^*=n-1$.

As an application, we derived an explicit error bound for the optimality system of SDPs satisfying strict complementarity.
Interestingly, this explicit bound was obtained without assuming the usual uniqueness of the optimal solution.
Moreover, we presented an example showing that it can be asymptotically tight up to a dimension-free constant factor.

{A limitation of our results is that the notion of tightness considered here is asymptotic and is defined in terms of sequences of instances; see Definition~\ref{def:tight_eb}. For $d^* \in \{0,1\}$, the error bound \eqref{eq:computable_eb} is asymptotically tight in the sense that there exist sequences of instances whose dimensions tend to infinity, for which the condition in Definition~\ref{def:tight_eb} is satisfied. As discussed in Remark~\ref{rem:int}, this shows that, for these families of instances, the derived error bound constants are best possible up to a dimension-free constant factor.

In order to dispel potential sources of confusion, it is important to emphasize what this asymptotic tightness statement does and does not imply. In particular, we do not claim that, for \emph{every} family of instances with $d^* \in \{0,1\}$, the error bound in  \eqref{eq:computable_eb} is asymptotically tight. Nor do we claim that, for a \emph{single specific instance}, that is, for fixed $\calV$ and $\calS_+^n$, the derived constants in the error bound  \eqref{eq:computable_eb}  are necessarily best possible. The error bound holds for each individual instance, but its constants may be conservative for particular instances. Indeed, as observed in Section~\ref{sec:dPPS_n-1}, in the most degenerate case where $d^* = n-1$, the derived error bound appears to be rather conservative.}

Getting a reasonable expression for the best possible constants for one specific semidefinite feasibility problem remains a significant research challenge but we hope this work may be a useful first step.
It would be of interest to investigate whether our bounds can be improved, for example, by exploiting more refined geometric properties of the positive semidefinite cone. 

On the algorithmic side, it would be interesting to understand how the explicit error bounds obtained in this paper can be used to derive explicit convergence rate estimates for some of the relevant numerical algorithms in the literature.

\vspace{0.5cm}
\noindent
{\bf Acknowledgments}
The first author is supported by Japan Society for the Promotion of Science (JSPS) Grant-in-Aid for Research Activity Start-up JP25K23344.
The second author is partially supported by the Australian Research Council under Discovery Projects DP190100555 and DP250101112.
The third author is supported by JSPS Grant-in-Aid for Early-Career Scientists JP23K16844 and Japan Science and Technology Agency ASPIRE Grant Number JPMJAP2520.

\vspace{0.5cm}
\noindent
\textbf{Statements and declarations}
\begin{itemize}
\item Competing interests: The authors declare that there are no competing interests.
\item Availability of data and materials: No data were used for the research described in the article.
\item Use of AI: Preliminary versions of Lemmas~\ref{lem:ub_hi} and \ref{lem:ub_cij} were developed with the assistance of ChatGPT.
The proofs in the present version were subsequently rewritten and verified by the authors, who assume full responsibility for their correctness. 
\end{itemize}

\begin{appendices}
\section{\large The proof of \texorpdfstring{\eqref{eq:dpps_eq_ds}}{(2.4)}} \label{apdx:dpps_eq_ds}
{\small
\begin{proof}
For simplicity, we write $d^*$ for $d_{\rm PPS}(\calV,\calS_+^n)$.
It is sufficient to show that $d_{\rm s} \le d^*$.
Let $\calF_{d^*+1} \subsetneq \cdots \subsetneq \calF_1$ be a chain of faces of $\calS_+^n$.
By the definition of $d^*$, the face $\calF_{d^*+1}$ is either polyhedral or satisfies $\calV \cap \ri\calF_{d^*+1} \neq \emptyset$.
If $\calV \cap \ri\calF_{d^*+1} \neq \emptyset$, then, by the definition of $d_{\rm s}$, we have $d_{\rm s} \le d^*$.
It remains to consider the case in which $\calV \cap \ri\calF_{d^*+1} = \emptyset$.
Then $\calF_{d^*+1}$ must be polyhedral.
The polyhedral faces of $\calS_+^n$ are precisely the $0$-dimensional face $\{\bm{O}\}$ and the $1$-dimensional faces $\bbR_+\bm{A}$ with $\bm{A}$ being a positive semidefinite matrix of rank $1$.
If $\calF_{d^*+1} = \{\bm{O}\}$, we have $\calV \cap \calS_+^n = \{\bm{O}\}$.
As shown in the proof of \cite[Proposition~27]{Lourenco2021}, we can reach the polyhedral face $\{\bm{O}\}$ of $\calS_+^n$ in only one facial reduction step, and $\calV \cap \ri\{\bm{O}\} \neq \emptyset$.
Under the assumption that $\calV \cap \ri\calF_{d^*+1} = \emptyset$, we have $\calV \cap \setint\calS_+^n = \emptyset$.
Moreover, $\calS_+^n$ is not polyhedral since $n\ge 2$.
Therefore, we obtain $d_{\rm s} = d^* = 1$.
If $\calF_{d^*+1} = \bbR_+\bm{A}$ with $\bm{A}$ being a positive semidefinite matrix of rank $1$, then $\ri\calF_{d^*+1} = \{t\bm{A} \mid t > 0\}$.
Since $\calV \cap \ri\calF_{d^*+1} = \emptyset$ and the problem $\Feas(\calV,\calS_+^n)$ is feasible, we have $\calV \cap \calS_+^n = \calV \cap \calF_{d^*+1} = \{\bm{O}\}$.
Therefore, by the same argument as in the case where $\calF_{d^*+1} = \{\bm{O}\}$, we again obtain $d_{\rm s} = d^* = 1$.
\end{proof}
}

\section{\large The proof of Lemma~\ref{lem:Hoffman}} \label{apdx:proof_lemma}
{\small 
\begin{proof}
Here, we only prove \eqref{enum:eta_inequality}.
The statement in \eqref{enum:eta_equality} can be proven in a similar fashion.
Let $\bm{a}_{ij}^\top$ denote the $j$th row of $\bm{A}_i$, and let $b_{ij}$ denote the $j$th element of $\bm{b}_i$.
For notational convenience, we define $C_{ij} \coloneqq \{\bm{x} \in \bbR^n \mid \bm{a}_{ij}^\top \bm{x} \le b_{ij}\}$ for each $i = 1,2$ and $j = 1,\dots,m_i$.
We note that
\begin{equation*}
C_1 \cap C_2 = \bigcap_{i=1}^2\bigcap_{j=1}^{m_i}C_{ij} = \{\bm{x} \in \bbR^n \mid \bm{a}_{ij}^\top \bm{x} \le b_{ij}\ (i=1,2,\ j = 1,\dots,m_i)\} = \{\bm{x}\in \bbR^n \mid \bm{A}\bm{x} \le \bm{b}\},
\end{equation*}
where $\bm{b}$ is the vector obtained by concatenating the vector $\bm{b}_1$ with the vector $\bm{b}_2$.
Then for any $\bm{x} \in \bbR^n$, we have
\begin{align*}
\dist(\bm{x},C_1 \cap C_2) &\le H(\bm{A}) \sqrt{\sum_{i=1}^2\sum_{j=1}^{m_i} \max\{\bm{a}_{ij}^\top \bm{x} - b_{ij},0\}^2}\\ 
&= H(\bm{A})\sqrt{\sum_{i=1}^2\sum_{j=1}^{m_i} \lVert\bm{a}_{ij}\rVert_2^2 \dist(\bm{x},C_{ij})^2}\\
&\le H(\bm{A})\norm{\bm{A}}_{\rm F}\max_{\substack{i=1,2,\\j=1,\dots,m_i}}\dist(\bm{x},C_{ij})\\
&\le H(\bm{A})\norm{\bm{A}}_{\rm F}\max\{\dist(\bm{x},C_1),\dist(\bm{x},C_2)\},
\end{align*}
where the first inequality follows from the definition of the Hoffman constant $H(\bm{A})$, the equality follows from the formula for the distance to a half-space~\cite[Theorem~9.39]{Deutsch2001}, and the last inequality follows from the fact that $C_i \subseteq C_{ij}$ for each $i = 1,2$ and $j = 1,\dots,m_i$.
Thus, the conclusion holds.
\end{proof}
}

\section{\large Computation of the quantities in Section~\ref{sec:dPPS_1}}\label{apdx:calc_comps}
{\small In this appendix, we illustrate the computations of $\overline{\kappa}(\rho;\bm{X}^\star)$, $\alpha_1$, and $\beta_1$ used in Section~\ref{sec:dPPS_1}.
For simplicity, we consider the case where the ambient space is $\calS^n$ instead of $\calS^{n_k}$ with $n_k = k + 3$, because we can obtain the result for the case of $\calS^{n_k}$ by substituting $n_k$ into $n$.

\noindent
\fbox{$\overline{\kappa}(\rho;\bm{X}^\star) = \sqrt{2}(1 + 2\sqrt{2} + 4\rho)$}
Recall that $\overline{\kappa}(\rho;\bm{X}^\star) = \max\{\eta(\calV_n,\setspan\calF_2),1\}\overline{\theta}(\rho;\bm{X}^\star)$.
The value $\overline{\theta}(\rho;\bm{X}^\star)$, which is defined in \eqref{eq:bar_theta}, is $1 + 2\sqrt{2} + 4\rho$.
In what follows, we show that $\eta(\calV_n,\setspan\calF_2) = \sqrt{2}$.
Before computing $\eta(\calV_n,\setspan\calF_2)$, we prepare an additional lemma regarding the direct sum decomposition of a subspace of $\calS^n$.

\begin{lemma}\label{lem:subspace_direct_sum}
Let $V_1$ and $V_2$ be subspaces of $\calS^n$.
If $P_{V_2}(V_1)$ is included in $V_1$, then we have
\begin{equation*}
V_1 = (V_1 \cap V_2) \oplus (V_1 \cap V_2^\perp).
\end{equation*}
\end{lemma}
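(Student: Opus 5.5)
The plan is to show that every $\bm{X} \in V_1$ splits into a piece in $V_1 \cap V_2$ and a piece in $V_1 \cap V_2^\perp$, and that this splitting is unique and orthogonal. For arbitrary $\bm{X} \in V_1$, write the orthogonal decomposition of $\calS^n$ with respect to $V_2$:
\begin{equation*}
\bm{X} = P_{V_2}(\bm{X}) + (\bm{X} - P_{V_2}(\bm{X})).
\end{equation*}
By hypothesis $P_{V_2}(\bm{X}) \in P_{V_2}(V_1) \subseteq V_1$. Since it also lies in $V_2$, we get $P_{V_2}(\bm{X}) \in V_1 \cap V_2$.

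The remainder $\bm{X} - P_{V_2}(\bm{X})$ is a difference of two elements of the subspace $V_1$, so it belongs to $V_1$. It also lies in $V_2^\perp$ by the characterization of orthogonal projection onto a subspace. Hence $\bm{X} - P_{V_2}(\bm{X}) \in V_1 \cap V_2^\perp$, and therefore $V_1 \subseteq (V_1 \cap V_2) + (V_1 \cap V_2^\perp)$.

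The reverse inclusion holds because both summands are subspaces of $V_1$ and $V_1$ is closed under addition. The sum is direct, and in fact orthogonal, because $(V_1 \cap V_2) \cap (V_1 \cap V_2^\perp) \subseteq V_2 \cap V_2^\perp = \{\bm{O}\}$. This gives $V_1 = (V_1 \cap V_2) \oplus (V_1 \cap V_2^\perp)$.

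There is no real obstacle here. The only point requiring care is to use the hypothesis $P_{V_2}(V_1) \subseteq V_1$ exactly once, to place the projected component inside $V_1$; everything else follows from the subspace property and the definition of orthogonal projection.
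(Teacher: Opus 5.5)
Your proof is correct. The paper omits this proof as straightforward, and your argument is the routine verification it intends: split $\bm{X} = P_{V_2}(\bm{X}) + (\bm{X} - P_{V_2}(\bm{X}))$, use the hypothesis to place the first piece (and hence the second) in $V_1$, and get directness from $V_2 \cap V_2^\perp = \{\bm{O}\}$.
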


The proof of Lemma~\ref{lem:subspace_direct_sum} is omitted because it is straightforward.
We note that the statement does not hold if $P_{V_2}(V_1)$ is not included in $V_1$.

The face $\calF_2$ of $\calS_+^n$ exposed by the matrix $\bm{Z}_1$ is provided by \eqref{eq:d*=1_F2}.
Let
\begin{equation*}
\calW \coloneqq \calV_n - \bm{X}^\star = \left\{\bm{A}\in \calS^n \relmiddle| \sum_{i=1}^n A_{ii} = 0,\ \sum_{i=3}^n A_{ii} = 0\right\}
\end{equation*}
for simplicity.
We observe that $\eta(\calV_n,\setspan\calF_2)$, denoted by $\eta$ for brevity, is the minimum positive number satisfying the following inequality:
\begin{equation*}
\dist(\bm{X},\calW \cap \setspan\calF_2) \le \eta\max\{\dist(\bm{X},\calW),\dist(\bm{X},\setspan\calF_2)\} \text{ for all $\bm{X} \in \calS^n$.}
\end{equation*}
Let $\bm{D}_1 \coloneqq \bm{E}_{11} + \bm{E}_{22}$ and $\bm{D}_2 \coloneqq \sum_{i=3}^n \bm{E}_{ii}$.
We note that $\calW = \{\bm{D}_1,\bm{D}_2\}^\perp$.
It follows that
\begin{equation*}
\calS^n = \calW \oplus \calW^\perp = \underbrace{(\calW \cap \setspan\calF_2) \oplus (\calW \cap (\setspan\calF_2)^\perp)}_{=\calW} \oplus \underbrace{\bbR\bm{D}_1 \oplus \bbR\bm{D}_2}_{=\calW^\perp},
\end{equation*}
where the second inequality follows from $P_{\setspan\calF_2}(\calW) \subseteq \calW$ and Lemma~\ref{lem:subspace_direct_sum}.
Following this (orthogonal) direct sum decomposition, we decompose a given matrix $\bm{X} \in \calS^n$ into $\bm{X} = \bm{Y} + \bm{Z} + s\bm{D}_1 + t\bm{D}_2$, where $\bm{Y} \in \calW \cap \setspan\calF_2$, $\bm{Z}\in \calW \cap (\setspan\calF_2)^\perp$, and $s,t\in \bbR$.
Let $a \coloneqq \sqrt{2}\abs{s}$, $b \coloneqq \norm{\bm{Z}}_{\rm F}$, and $c \coloneqq \sqrt{n-2}\abs{t}$.
By $\bbR\bm{D}_1 = \calW^\perp \cap \setspan\calF_2$ and $\bbR\bm{D}_2 = \calW^\perp \cap (\setspan\calF_2)^\perp$, we see that
\begin{align*}
\dist(\bm{X},\calW\cap \setspan \calF_2) &= \norm{\bm{Z} + s\bm{D}_1 + t\bm{D}_2}_{\rm F} = \sqrt{a^2 + b^2 + c^2},\\
\dist(\bm{X},\calW) &= \norm{s\bm{D}_1 + t\bm{D}_2}_{\rm F} = \sqrt{a^2 + c^2},\\
\dist(\bm{X},\setspan\calF_2) &= \norm{\bm{Z} + t\bm{D}_2}_{\rm F} = \sqrt{b^2 + c^2}. 
\end{align*}
The numbers $a$, $b$, and $c$ range over all nonnegative numbers as $\bm{X}$ ranges over the set $\calS^n$.
Therefore, $\eta$ is indeed the minimum positive number satisfying the following inequality:
\begin{equation*}
\sqrt{a^2 + b^2 + c^2} \le \eta\max\{\sqrt{a^2 + c^2}, \sqrt{b^2 + c^2}\}\ \text{ for all $a,b,c \in \bbR_+$}. 
\end{equation*}
When $c = 0$ and $a = b > 0$, since
\begin{equation*}
\frac{\sqrt{a^2 + b^2 + c^2}}{\max\{\sqrt{a^2 + c^2}, \sqrt{b^2 + c^2}\}} = \sqrt{2},
\end{equation*}
we see that $\eta \ge \sqrt{2}$.
Conversely, for any $a,b,c \in \bbR_+$ with $(a,b,c) \neq (0,0,0)$, we have
\begin{equation*}
\frac{\sqrt{a^2 + b^2 + c^2}}{\max\{\sqrt{a^2 + c^2}, \sqrt{b^2 + c^2}\}} \le \sqrt{2}, 
\end{equation*}
i.e., $\eta \le \sqrt{2}$.
Thus, we obtain $\eta = \sqrt{2}$.

\noindent
\fbox{$\alpha_1 = \sqrt{n + 8\sqrt{n-2} + 15}$ and $\beta_1 = \sqrt{\smash[b]{2\sqrt{2(n-2)} + 6\sqrt{2}}}$}
Recall that $\alpha_1$ and $\beta_1$ are defined as \eqref{eq:def_alphai} and \eqref{eq:def_betai}, respectively.
These definitions require $r_2$ and $\gamma_1$.
First, we see that $r_2 = 2$ since $\calF_2$ is linearly isomorphic to $\calS_+^2$.
Second, since $\norm{\bm{Z}_1}_{\rm F} = \sqrt{n-2}$ and $\lambda_{\rm min}^+(\bm{Z}_1) = 1$, the value $\gamma_1$ defined in \eqref{eq:def_gammai} is $\sqrt{n-2} + 2$.
Substituting these components into \eqref{eq:def_alphai} and \eqref{eq:def_betai}, we obtain the desired result.}

\section{\large Computation of the  radial modulus functions in  Example~\ref{ex:infeasible_X}}\label{apdx:alphaSDP_betaSDP}
{\small In this appendix, we illustrate the computations of $\alpha_{\rm SDP}(\rho)$ and $\beta_{\rm SDP}(\rho)$.
To compute them, we need to calculate the ingredients involved with them, i.e., $\overline{\kappa}(\rho;\bm{X}^\star)$, $\norm{\calA|_{\bbR\bm{S}^\star}}_{\rm op}$, $\sigma_{\rm min}^+(\overline{\calA})$, $\alpha$, and $\beta$.
Among them, $\overline{\kappa}(\rho;\bm{X}^\star)$, $\alpha$, and $\beta$ have been in fact already calculated in Appendix~\ref{apdx:calc_comps}; $\alpha$ is the same as $\alpha_1$ and $\beta$ is the same as $\beta_1$.
In what follows, we compute $\sigma_{\rm min}^+(\overline{\calA})$ and $\norm{\calA|_{\bbR\bm{S}^\star}}_{\rm op}$.

\noindent
\fbox{$\sigma_{\rm min}^+(\overline{\calA}) = \sqrt{n-1 - \sqrt{n^2-4n+5}}$}
For the instance discussed in Example~\ref{ex:infeasible_X}, the linear mapping $\overline{\calA}$ defined in \eqref{eq:barA} can be described as
\begin{equation*}
\overline{\calA}(\bm{X}) = \begin{pmatrix}
\sum_{i=1}^n X_{ii}\\
\sum_{i=3}^n X_{ii}
\end{pmatrix}
\end{equation*}
for every $\bm{X}\in \calS^n$.
The representation matrix of $\overline{\calA} \, \overline{\calA}^*$ with respect to the standard basis of $\bbR^2$ is $\begin{pmatrix}
n &  n-2\\
n-2 & n-2
\end{pmatrix}$, so we have $\sigma_{\rm min}^+(\overline{\calA}) = \sqrt{n-1 - \sqrt{n^2-4n+5}}$.

\noindent
\fbox{$\norm{\calA|_{\bbR\bm{S}^\star}}_{\rm op} = \sqrt{n-2}$}
Recall that $\calA$ is the linear mapping that maps $\bm{X} \in \calS^n$ to $\langle \bm{I}_n, \bm{X}\rangle$.
Then $\calA|_{\bbR\bm{S}^\star}$ is the linear mapping that maps $t\bm{S}^\star$ for $t\in \bbR$ to $\langle \bm{I}_n, t\bm{S}^\star\rangle = t(n-2)$.
Therefore, we have
\begin{equation*}
\norm{\calA|_{\bbR\bm{S}^\star}}_{\rm op} = \sup_{\substack{\bm{X} \in \bbR\bm{S}^\star,\\\bm{X}\neq \bm{O}}} \frac{\norm{\calA|_{\bbR\bm{S}^\star}(\bm{X})}_2}{\norm{\bm{X}}_{\rm F}} = \sup_{t\neq 0} \frac{\abs{t(n-2)}}{\norm{t\bm{S}^\star}_{\rm F}} = \sqrt{n-2}.
\end{equation*}

We can directly calculate $\beta_{\rm SDP}(\rho)$ defined in \eqref{eq:betaSDP} by using $\beta$ and $\overline{\kappa}(\rho;\bm{X}^\star)$.
The function $\alpha_{\rm SDP}(\rho)$ defined in \eqref{eq:alphaSDP} is calculated as
\begin{align*}
\alpha_{\rm SDP}(\rho) &= \sqrt{2}(1 + 2\sqrt{2} + 4\rho)\max\left\{\frac{2}{\sqrt{n-1 - \sqrt{n^2 -4n + 5}}}, \sqrt{n + 8\sqrt{n-2} + 15} + \frac{1}{\sqrt{n-2}}\right\} + \frac{1}{\sqrt{n-2}} \\
&= \sqrt{2}(1 + 2\sqrt{2} + 4\rho)\left(\sqrt{n + 8\sqrt{n-2} + 15} + \frac{1}{\sqrt{n-2}}\right) + \frac{1}{\sqrt{n-2}}, 
\end{align*}
where the second equality follows because
\begin{equation*}
\sqrt{n + 8\sqrt{n-2} + 15} + \frac{1}{\sqrt{n-2}} \ge \frac{2}{\sqrt{n-1 - \sqrt{n^2 -4n + 5}}}
\end{equation*}
holds under the assumption that $n\ge 3$.
}
\end{appendices}
%------------------

{\small 
\bibliographystyle{plainurl} % スタイルの指定
% https://mathlandscape.com/latex-bibstyles/：【LaTeX】BibTeXのスタイル76個一覧
\bibliography{myref} %
%------------------
}
\end{document}